\numberwithin{equation}{section}
\newcommand{\mc}[1]{\mathcal{#1}}
\newtheorem{theorem}{Theorem}[section]
\newtheorem{proposition}{Proposition}[section]
\newtheorem{lemma}[theorem]{Lemma}
\newtheorem{problem}[theorem]{Problem}
\newtheorem{corollary}[theorem]{Corollary}
\newtheorem{fact}[theorem]{Fact}
\newtheorem{example}[theorem]{Example}
\numberwithin{theorem}{section}
\renewcommand{\em}{\bf}
\theoremstyle{definition}
\newtheorem{definition}[theorem]{Definition}
\newcommand{\setm}{\setminus}
\newcommand{\empt}{\emptyset}
\newcommand{\subs}{\subset}
\newcommand{\dom}{\operatorname{dom}}
\def\<{\left\langle}
\def\>{\right\rangle}
\newcommand{\we}{\operatorname{w}}
\newcommand{\den}{\operatorname{d}}
\newcommand{\cel}{\operatorname{c}}
\newcommand{\lin}{\operatorname{L}}
\newcommand{\ce}{\operatorname{cel}}
\newcommand{\sh}{\operatorname{sh}}
\def\br#1;#2;{\bigl[ {#1} \bigr]^ {#2} }
\newcommand{\ostwo}{\operatorname{os_2}}
\newcommand{\osthree}{\operatorname{os_3}}
\newcommand{\costwo}{\operatorname{cs_2}}
\newcommand{\costhree}{\operatorname{cs_3}}
\newcommand{\cosi}{\operatorname{cs_i}}
\newcommand{\osi}{\operatorname{os_i}}
\newcommand{\RO}{\operatorname{RO}}
\newcommand{\inte}{\operatorname{int}}
\author[I. Juh\'asz]{Istv\'an Juh\'asz}
\address
      { Alfréd Rényi Institute of Mathematics, Hungarian Academy of Sciences}
\email{juhasz@renyi.hu}
\author[L. Soukup]{Lajos Soukup}
\address
      { Alfréd Rényi Institute of Mathematics, Hungarian Academy of Sciences}
\email{soukup@renyi.hu}
\author[Z. Szentmikl\'ossy]{Zolt\'an Szentmikl\'ossy}
\address{E\"otv\"os University of Budapest}
\email{szentmiklossyz@gmail.com
}
\title[Small nowhere constant images]
{Spaces of small cellularity have nowhere constant continuous images  of small weight}
\subjclass[2010]{54C10, 54A25, 54A35}
\keywords{nowhere constant map, pseudo-open map, crowdedness preserving map, open splitting number, shattering number}
\date{\today}
\thanks{The preparation of this paper was
supported by  NKFIH grants K113047 and K129211.}
\begin{document}

\begin{abstract}
We call a continuous map $f : X \to Y$ {\em nowhere constant} if it is not constant on
any non-empty open subset of its domain $X$. Clearly, this is equivalent with
the assumption that every fiber $f^{-1}(y)$ of $f$ is nowhere dense in $X$. We call the
continuous map $f : X \to Y$ {\em pseudo-open} if for each nowhere dense $Z \subs Y$ its
inverse image $f^{-1}(Z)$ is nowhere dense in $X$. Clearly, if $Y$ is crowded, i.e. has
no isolated points, then $f$ is nowhere constant.

The aim of this paper is to study the following, admittedly imprecise, question:
How "small" nowhere constant, resp. pseudo-open continuous images can "large" spaces have?
Our main results yield the following two precise answers to this question, explaining
also our title. Both of them involve the cardinal function $\widehat{c}(X)$, the "hat version"
of cellularity, which is defined as the smallest cardinal $\kappa$ such that there is no
$\kappa$-sized disjoint family of open sets in $X$. Thus, for instance, $\widehat{c}(X) = \omega_1$
means that $X$ is CCC.

\smallskip

THEOREM A. Any crowded Tychonov space $X$ has a crowded Tychonov nowhere constant continuous image
$Y$ of weight $\we(Y) \le \widehat{c}(X)$. Moreover, in this statement $\le$ may be replaced with $<$
iff there are no $\widehat{c}(X)$-Suslin lines (or trees).

\smallskip

THEOREM B. Any crowded Tychonov space $X$ has a crowded Tychonov pseudo-open continuous image
$Y$ of weight $\we(Y) \le 2^{<\widehat{c}(X)}$. If Martin's axiom holds then there is a CCC crowded Tychonov space $X$ such that
for any crowded Hausdorff pseudo-open continuous image $Y$ of $X$ we have
$\we(Y) \ge \mathfrak{c}\,( = 2^{< \omega_1})$.

\end{abstract}

\dedicatory{Dedicated to Professor Arhangel'skii, on the occasion of his 80th birthday}

\maketitle

\section{Introduction}

In this paper all spaces are assumed to be crowded Hausdorff spaces. 
A space is called crowded if it has no isolated points.
Moreover,
all maps considered are continuous maps between such spaces.

Of course, all spaces with additional properties are also assumed to be crowded.
One, perhaps less widely known such property which we shall frequently assume is that of $\pi$-regularity.
A (Hausdorff) space $X$ is  $\pi$-regular if for every non-empty open set $U$ in $X$
there is a non-empty open set $V$ such that $\overline{V} \subs U$, 
in other words: the regular closed subsets of $X$ form a $\pi$-network in $X$, see e.g. \cite{JSSz}.

For any space $X$ we denote by $\tau(X)$ the topology of $X$ and put $\tau^+(X) = \tau(X) \setm \{\emptyset\}$.
Similarly, $RO(X)$ denotes the collection of all regular open sets in $X$, moreover $RO^+(X) = RO(X) \setm \{\emptyset\}$.
We shall denote by $CR(X)$ the collection of all non-empty
crowded subspaces of the space $X$.

As is mentioned in the abstract, the aim of this paper is to examine what can be said about the weight of
nowhere constant continuous images of spaces. Here is the self-explanatory definition of such maps.

\begin{definition}
 A continuous map $f:X\to Y$  is {\em nowhere constant (NWC)}\index{nwc}\index{nowhere constant}
iff it is not constant on any non-empty open sets, i.e.
  \begin{displaymath}
   \forall U\in \tau^+(X)\ |f[U]|>1.
  \end{displaymath}
\end{definition}

The following simple proposition yields alternative definitions of this concept.

\begin{proposition}\label{tm:nwc-equivalence}
 For a continuous map $f:X\to Y$
 the following statements are equivalent:
\begin{enumerate}[(a)]
 \item $f$ is NWC,
 \item $f[U]$ is crowded for each $U\in \tau^+_X$,
\item $f^{-1}(y)$ is nowhere dense in $Y$, i.e. $\inte(f^{-1}(y))=\empt$ for each $y\in Y$.
 \end{enumerate}
\end{proposition}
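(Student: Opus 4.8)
The plan is to establish the two equivalences (a)$\Leftrightarrow$(c) and (a)$\Leftrightarrow$(b), which together give that all three statements are equivalent. The first I would obtain by pure contraposition, since the failure of (a) and the failure of (c) say literally the same thing about a fiber; the second needs only the observation that a nonempty crowded space cannot be a singleton.

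For (a)$\Leftrightarrow$(c), I would first record that, as $Y$ is Hausdorff (hence $T_1$), every fiber $f^{-1}(y)$ is closed in $X$, so for such a set "nowhere dense" and "$\inte(f^{-1}(y))=\empt$" coincide; thus (c) is exactly the assertion that no fiber has nonempty interior. Now $f$ fails to be NWC iff there is some $U\in\tau^+(X)$ with $|f[U]|=1$, say $f[U]=\{y\}$, and $f[U]=\{y\}$ is equivalent to $\empt\neq U\subs f^{-1}(y)$, i.e. to $\inte(f^{-1}(y))\neq\empt$. Hence the negation of (a) holds iff the negation of (c) holds for some $y\in Y$, which is precisely (a)$\Leftrightarrow$(c).

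For (a)$\Rightarrow$(b), I would fix $U\in\tau^+(X)$ and suppose toward a contradiction that some $y\in f[U]$ is isolated in the subspace $f[U]$, so that there is an open $V\subs Y$ with $V\cap f[U]=\{y\}$. Then $W:=U\cap f^{-1}(V)$ is open, and it is nonempty because $y\in f[U]$ supplies an $x\in U$ with $f(x)=y\in V$. Since $f[W]\subs f[U]\cap V=\{y\}$ while $f[W]\neq\empt$, the map $f$ is constant on the nonempty open set $W$, contradicting (a); hence $f[U]$ has no isolated point and is crowded. The converse (b)$\Rightarrow$(a) is immediate, since for $U\in\tau^+(X)$ the set $f[U]$ is nonempty and crowded, and a nonempty crowded space is never a singleton, so $|f[U]|>1$.

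The whole argument is elementary, and I expect the only point needing care to be the bookkeeping in (a)$\Leftrightarrow$(c): one must note that the fibers are closed (using merely that $Y$ is $T_1$) so that the "nowhere dense" and "empty interior" formulations in (c) genuinely agree, reading (c) as nowhere density in the domain $X$.
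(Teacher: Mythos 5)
Your proposal is correct and takes essentially the same route as the paper: (a)$\Leftrightarrow$(c) via the observation that fibers are closed (so empty interior equals nowhere density), and (a)$\Leftrightarrow$(b) by pulling back a neighborhood witnessing an isolated point of $f[U]$ to a nonempty open set on which $f$ is constant, which is exactly the content the paper compresses into ``immediate'' plus the remark that restrictions of NWC maps to open sets are NWC. You also correctly read the ``nowhere dense in $Y$'' in (c) as nowhere density in the domain $X$, which is how the paper uses it.
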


 \begin{proof}
 (a) $\Leftrightarrow$ (b): It is immediate that $f[X]$ is crowded if $f$ is NWC. Next,
 if $U\in \tau^+(X)$ then the restriction $f\upharpoonright U$ is NWC as well.

 \smallskip
 \noindent (a) $\Leftrightarrow$ (c) is obvious because $f^{-1}(y)$ is closed for each $y \in f[X]$.
 \end{proof}

This proposition leads us to two natural strengthenings of the notion of NWC maps
that we shall also consider.

\begin{definition}
A continuous map $f:X\to Y$ is
{\em pseudo-open (PO)}\index{pseudo-open}\index{PO} iff
for every nowhere dense subset $Z$ of $Y$ its preimage $f^{-1}(Z)$ is nowhere dense in $X$.
\end{definition}

As singletons in a crowded space are nowhere dense, every PO map into a crowded space is NWC.
The following observation is obvious, so we leave its proof to the reader.

\begin{proposition}\label{tm:po-equivalence}
 For a continuous map $f:X\to Y$
 the following three statements are equivalent:
\begin{enumerate}[(a)]
\item $f$ is PO,
\item the preimage of any dense open subset of $Y$ is dense (and open) in $X$,
\item for each $U\in \tau^+(X)$ we have $int(\overline{f[U]}) \ne \emptyset$.
 \end{enumerate}
\end{proposition}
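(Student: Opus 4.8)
The plan is to prove the two equivalences $(a)\Leftrightarrow(b)$ and $(b)\Leftrightarrow(c)$ by purely topological complementation arguments, exploiting two standard dualities: a set $D\subs Y$ is dense open exactly when its complement $Y\setm D$ is closed and nowhere dense; and for a continuous $f$ one has $f^{-1}(Y\setm A)=X\setm f^{-1}(A)$, with $f^{-1}(D)$ automatically open whenever $D$ is open. Before starting I would record one reduction that keeps the bookkeeping clean: in $(a)$ it suffices to test \emph{closed} nowhere dense sets. Indeed, if $Z$ is nowhere dense then so is its closure $\overline Z$, and $f^{-1}(Z)\subs f^{-1}(\overline Z)$, so once $f^{-1}(\overline Z)$ is known to be nowhere dense its subset $f^{-1}(Z)$ is as well. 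Thus throughout I may pass freely between a nowhere dense $Z$ and the dense open set $D=Y\setm\overline Z$.

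For $(a)\Leftrightarrow(b)$ I would run the correspondence $Z\leftrightarrow D=Y\setm Z$ in both directions over closed nowhere dense $Z$. Given a dense open $D$, set $Z=Y\setm D$; then $f^{-1}(D)$ is open and $f^{-1}(Z)=X\setm f^{-1}(D)$, and $f^{-1}(D)$ is dense precisely when $\inte\bigl(f^{-1}(Z)\bigr)=\empt$, i.e.\ when the closed set $f^{-1}(Z)$ is nowhere dense. Hence assuming $(a)$ forces $f^{-1}(D)$ to be dense open, giving $(b)$; conversely, starting from an arbitrary nowhere dense $Z$, applying $(b)$ to $D=Y\setm\overline Z$ makes $f^{-1}(\overline Z)=X\setm f^{-1}(D)$ closed with empty interior, so $f^{-1}(Z)$ is nowhere dense, giving $(a)$.

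For $(b)\Leftrightarrow(c)$ I would argue by contraposition, since both negations unwind to the same witness. The negation of $(b)$ yields a dense open $D$ and a nonempty open $U$ with $U\cap f^{-1}(D)=\empt$, equivalently $f[U]\subs Y\setm D$; as $Y\setm D$ is nowhere dense, $\overline{f[U]}$ has empty interior, which is exactly the negation of $(c)$. Conversely, from a nonempty open $U$ with $\inte\bigl(\overline{f[U]}\bigr)=\empt$ I would take $Z=\overline{f[U]}$ and $D=Y\setm Z$; then $D$ is dense open while $U\cap f^{-1}(D)=\empt$, so $f^{-1}(D)$ fails to be dense, which is the negation of $(b)$.

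I do not expect a genuinely hard step here — this is the "obvious" proposition the authors leave to the reader — and the whole argument is a duality computation. The only point that needs real care is not to conflate nowhere dense sets with closed nowhere dense ones in $(a)$: the statement quantifies over all nowhere dense $Z$, whereas the complementation trick naturally produces closed ones. Isolating the reduction $Z\rightsquigarrow\overline Z$ at the very beginning is what makes this potential gap disappear.
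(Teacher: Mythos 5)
Your proof is correct. The paper offers no argument of its own here (it explicitly leaves the proposition to the reader as obvious), and your complementation/duality computation is exactly the intended routine verification; in particular, your reduction from arbitrary nowhere dense sets to closed ones via $Z \rightsquigarrow \overline{Z}$ correctly handles the one point where a careless argument could slip.
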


We recall that a continuous map $f:X\to Y$ is called quasi-open (QO) if for each $U\in \tau^+(X)$
we have $int(f[U]) \ne \emptyset$, hence every QO map is PO. In some important cases the converse of this also holds.

\begin{fact}
 If the PO map $f:X\to Y$ is also closed and $X$ is $\pi$-regular, in particular if $X$ is compact, then $f$ is QO.
\end{fact}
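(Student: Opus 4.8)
The plan is to verify the quasi-openness condition directly: given an arbitrary $U\in\tau^+(X)$, I will produce a non-empty open subset of $f[U]$. The key idea is to use $\pi$-regularity to replace $U$ by a smaller open set whose closure still sits inside $U$, and then to exploit the closedness of $f$ to convert the information supplied by the PO hypothesis (which a priori controls only $\inte(\overline{f[\cdot]})$) into information about the genuine image $f[\cdot]$.

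Concretely, first I would fix $U\in\tau^+(X)$ and, invoking $\pi$-regularity of $X$, choose $V\in\tau^+(X)$ with $\overline{V}\subs U$. Next, since $f$ is PO, Proposition~\ref{tm:po-equivalence}(c) applied to $V$ yields $\inte(\overline{f[V]})\ne\emptyset$. The crucial step is then to observe that, because $f$ is closed and $\overline{V}$ is closed, the set $f[\overline{V}]$ is closed in $Y$; as it contains $f[V]$, it must contain $\overline{f[V]}$ as well. Hence $\overline{f[V]}\subs f[\overline{V}]\subs f[U]$, the last inclusion holding because $\overline{V}\subs U$.

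Taking interiors along this chain gives $\emptyset\ne\inte(\overline{f[V]})\subs\inte(f[U])$, so $\inte(f[U])\ne\emptyset$; since $U$ was arbitrary, $f$ is QO. For the ``in particular'' clause I would note that a compact Hausdorff space is regular, hence $\pi$-regular, and that any continuous map from a compact space into a Hausdorff space is closed, so both hypotheses are then automatic.

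I do not anticipate a genuine obstacle here: the argument is a short chain of inclusions. The only subtlety worth flagging is that the PO hypothesis by itself controls merely the interior of the closure $\overline{f[V]}$, so the combined role of the closedness of $f$ and the $\pi$-regular shrinking from $U$ to $V$ is precisely to bridge the gap between $\overline{f[V]}$ and the actual image $f[U]$.
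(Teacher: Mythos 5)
Your proof is correct, and it is precisely the argument the paper leaves implicit: this Fact is stated without proof, and the intended reasoning is exactly your chain $\emptyset\ne\inte(\overline{f[V]})\subs\overline{f[V]}\subs f[\overline{V}]\subs f[U]$, where $\pi$-regularity supplies $V$ with $\overline{V}\subs U$, the PO property (via Proposition~\ref{tm:po-equivalence}) gives the non-empty interior, and closedness of $f$ turns the closure of $f[V]$ into a subset of $f[U]$. Your justification of the ``in particular'' clause (compact Hausdorff spaces are regular, hence $\pi$-regular, and continuous maps from compact spaces into Hausdorff spaces are closed) is also exactly what is needed.
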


Now we give the second strengthening of the notion of NWC maps. This is based on the obvious fact that any non-empty
open subspace of a crowded space is crowded.

\begin{definition}
 A continuous map $f:X\to Y$  is {\em crowdedness preserving (CP)} iff the image of any crowded
subspace  of $X$ is crowded.
\end{definition}

We again have alternative characterizations of this notion.

\begin{theorem}\label{tm:cp-equivalence}
 For any continuous map $f:X\to Y$
 the following statements are equivalent:
\begin{enumerate}[(a)]
 \item $f$ is CP,
 \item $|f[S]|>1$ for each  $S\in CR(X)$,
 \item the preimage of every point of $Y$ is scattered.
\end{enumerate}
\end{theorem}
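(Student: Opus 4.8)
The plan is to establish the cyclic chain of implications (a) $\Rightarrow$ (b) $\Rightarrow$ (c) $\Rightarrow$ (a). The engine behind the two substantive steps is the classical characterization of scattered spaces: \emph{a space is scattered if and only if it contains no non-empty crowded subspace} (equivalently, every non-empty subspace has a point isolated in it). I would recall this characterization at the outset, together with two elementary observations: first, that a non-empty crowded space must have more than one point, since its unique point would otherwise be isolated; and second, that the subspace topology is transitive, so any subset of a fiber $f^{-1}(y)$ carries the same topology whether regarded inside the fiber or inside $X$. I shall also use the fact, noted in the paper, that a non-empty open subspace of a crowded space is itself crowded.

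The two easier links go as follows. For (a) $\Rightarrow$ (b), given $S \in CR(X)$, crowdedness preservation makes $f[S]$ a non-empty crowded space, which by the first observation has more than one point. For (b) $\Rightarrow$ (c), I argue by contraposition: if some fiber $f^{-1}(y)$ fails to be scattered, then by the characterization it contains a non-empty crowded subspace $S \in CR(X)$; but $f[S] = \{y\}$, so $|f[S]| = 1$, contradicting (b).

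For (c) $\Rightarrow$ (a), let $S \in CR(X)$ and suppose toward a contradiction that $f[S]$ is not crowded. Then $f[S]$ has a point $y$ isolated in it, so there is $V \in \tau^+(Y)$ with $f[S] \cap V = \{y\}$. The set $W = S \cap f^{-1}(V)$ is then a non-empty relatively open subset of the crowded space $S$ (non-empty because some $x \in S$ has $f(x) = y \in V$), hence $W$ is itself crowded, i.e. $W \in CR(X)$. Moreover $f[W] \subseteq f[S] \cap V = \{y\}$, so $W \subseteq f^{-1}(y)$. Thus $W$ is a non-empty crowded subspace of the fiber $f^{-1}(y)$, which by (c) is scattered, contradicting the characterization. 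Therefore $f[S]$ is crowded and $f$ is CP.

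I do not expect a genuine obstacle here; the proof is essentially a matter of unwinding definitions. The only points demanding care are the invocation of the scattered-versus-crowded-subspace characterization, and the transitivity of the subspace topology in the final step, which guarantees that $W$ being crowded inside $X$ is the same as being crowded as a subspace of the fiber $f^{-1}(y)$.
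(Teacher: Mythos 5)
Your proposal is correct and follows essentially the same route as the paper: the same cyclic chain (a) $\Rightarrow$ (b) $\Rightarrow$ (c) $\Rightarrow$ (a), with the scattered-iff-no-crowded-subspace characterization doing the work in both substantive steps, and with (c) $\Rightarrow$ (a) obtained by pulling back an open set witnessing non-crowdedness of $f[S]$ to get a relatively open piece of $S$ sitting inside a single fiber. The only cosmetic difference is that you take $S \cap f^{-1}(V)$ directly where the paper passes through a neighborhood $U$ of $x$ with $f[U] \subseteq W$; the two are interchangeable.
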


\begin{proof}(a) $\Rightarrow$ (b): If $S\in CR(X)$ then $f[S]$ is crowded and so infinite.

 \smallskip \noindent (b) $\Rightarrow$ (c):
If $S\subs f^{-1}(y)$ for some $y\in Y$, then
$f[S]\subs \{y\}$, and so $S$ is not crowded by (b).
Thus $f^{-1}(y)$ does not contain any crowded subspaces, i.e.
$f^{-1}(y)$ is scattered.

  \smallskip \noindent (c) $\Rightarrow$ (a):
Assume (c) and that $f[S]$ is not crowded for some non-empty $S\subs X$.
Then there is an open $W\in \tau^+(Y)$ such that $f[S]\cap W=\{f(x)\}$
for some $x\in S$.
 Since $f$ is continuous, there is an open neighborhood $U$ of $x$
 such that $f[U]\subs W$, hence $f[U\cap S]=\{f(x)\}$. Thus $U\cap S$
 is scattered by (c), and so   $S$ is not crowded. Thus (a) holds.
\end{proof}

\section{Splitting families and splitting numbers}

In this section we introduce two kinds of splitting families that will turn out to play
an essential role in finding NWC or CP images of "small" weight of certain spaces.

\begin{definition}
Assume that $X$ is a space and   $\mc A,\mc B\subs \mc P(X)$.
 We say that
 \begin{enumerate}[(1)]
  \item $\mc A$ {\em $T_2$-splits} $\mc B$
  (or  $\mc A$ is a {\em$T_2$-splitting family} for $\mc B$) iff
  \begin{displaymath}
   \forall B\in \mathcal B \ \exists A_0, A_1\in \mathcal A\
   (A_0\cap A_1=\emptyset\land A_0\cap B\ne \emptyset\ne A_1\cap B).
  \end{displaymath}
 \item $\mc A$ {\em $T_3$-splits} $\mc B$
     (or  $\mc A$ is a {\em$T_3$-splitting family} for $\mc B$) iff
  \begin{displaymath}
   \forall B\in \mc B \ \exists A_0, A_1\in \mc A
   \ (\overline{A_0}\cap \overline{A_1}=\emptyset\land A_0\cap B\ne \emptyset\ne A_1\cap B).
  \end{displaymath}
 \end{enumerate}
\end{definition}

Actually, in all the interesting cases for us the splitting family $\mathcal{A}$ will consist of open sets;
this clearly explains and justifies our terminology. Also, our next result already shows
how to obtain NWC (resp. CP) Tychonov images of a normal space $X$ in the case that $\mathcal{B} = \tau^+(X)$
(resp. $\mathcal{B} = CR(X)$ ). 

\begin{theorem}\label{tm:nwc-from-splitting-normal}
Assume that  $X$ is a normal space and $\mc A \subs \mc P(X)$ is an infinite family that
$T_3$-splits $\mc B \subs \mc P(X)$. Then there is a continuous map
$f:X\to [0,1]^{|\mc A|}$ such that $|f[B]|>1$ for each $B\in \mc B$, i.e.
$f$ is not constant on any member of $\mathcal{B}$.
\end{theorem}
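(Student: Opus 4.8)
The plan is to realize $f$ as a product of Urysohn functions, one for each ``witnessing pair'' supplied by the $T_3$-splitting hypothesis, using normality of $X$ to produce these functions and the fact that $\mc A$ is infinite to pack all the resulting coordinates into $[0,1]^{|\mc A|}$. First I would fix the index set. Let $\mc P = \{(A_0,A_1)\in \mc A\times \mc A : \overline{A_0}\cap\overline{A_1}=\emptyset\}$. Since $\mc A$ is infinite, $|\mc P|\le |\mc A\times \mc A| = |\mc A|$, so it will suffice to build a continuous $g:X\to[0,1]^{\mc P}$ that is non-constant on each $B\in\mc B$ and then pad the remaining coordinates with the constant value $0$ to obtain a map into $[0,1]^{|\mc A|}$; this padding affects neither continuity nor non-constancy on the members of $\mc B$.

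Second, for each pair $P=(A_0,A_1)\in\mc P$ the sets $\overline{A_0}$ and $\overline{A_1}$ are disjoint closed subsets of the normal space $X$, so I would invoke Urysohn's lemma to get a continuous $g_P:X\to[0,1]$ with $g_P\equiv 0$ on $\overline{A_0}$ and $g_P\equiv 1$ on $\overline{A_1}$. Setting $g=(g_P)_{P\in\mc P}$ defines a map $X\to[0,1]^{\mc P}$ that is continuous, because continuity into a product carrying the product topology is equivalent to continuity of each coordinate function, and each $g_P$ is continuous by construction.

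Third, I would verify non-constancy. Fix $B\in\mc B$. Since $\mc A$ $T_3$-splits $\mc B$, there are $A_0,A_1\in\mc A$ with $\overline{A_0}\cap\overline{A_1}=\emptyset$ and $A_0\cap B\ne\emptyset\ne A_1\cap B$, so $P=(A_0,A_1)\in\mc P$. Choosing $x_0\in A_0\cap B\subs\overline{A_0}$ and $x_1\in A_1\cap B\subs\overline{A_1}$ gives $g_P(x_0)=0\ne 1=g_P(x_1)$, hence $g(x_0)\ne g(x_1)$ with both points lying in $B$. Thus $|g[B]|>1$, and after the padding the same two points witness $|f[B]|>1$.

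The construction is short, and the one genuine decision --- the point at which a naive attempt could derail --- is to index the coordinates by splitting \emph{pairs} rather than by single members of $\mc A$. Indexing by single sets fails, since one cannot in general separate a fixed $\overline{A_0}$ from the union of all $\overline{A_1}$ whose closures miss it (that union may cluster onto $\overline{A_0}$), so no single Urysohn function attached to $A_0$ can serve every $B$ at once. The pair indexing avoids this difficulty, and the elementary cardinal identity $|\mc A|^2=|\mc A|$, valid because $\mc A$ is infinite, is exactly what keeps the target a power of exponent $|\mc A|$.
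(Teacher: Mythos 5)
Your proof is correct and follows essentially the same route as the paper: index the coordinates by pairs $(A_0,A_1)$ from $\mc A$ with disjoint closures, attach a Urysohn function to each pair via normality, and let the splitting hypothesis produce two points of each $B$ with distinct images. Your explicit padding step even tidies a small bookkeeping point the paper glosses over (it asserts $|\mc A|=|\mc J|$, whereas in general one only has $|\mc J|\le|\mc A|$, which your padding handles).
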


\begin{proof}
Let us put $$ \mc J=\big\{\<A_0,A_1\>\in \br \mc A;2;: \overline{A_0}\cap \overline{A_1}=\empt\big\}. $$
Since $X$ is normal,
for each $\<A_0,A_1\>\in \mc J$  we can choose a continuous function $f_{\<A_0,A_1\>}:X\to [0,1]$ such that
\begin{displaymath}
 f[A_0]=\{0\}\land f[A_1]=\{1\}.
\end{displaymath}
We then define $f:X\to [0,1]^{\mc J}$   by the formula
\begin{displaymath}
   f(x)(\<A_0,A_1\>)=f_{\<A_0,A_1\>}(x).
\end{displaymath}

Now, for every $B\in \mc B$ we can pick $\<A_0,A_1\>\in J$
such that $A_i\cap B\ne \empt$ for $i < 2$.  So, if we pick $x_i\in A_i\cap B$
then we have $$f(x_0)(\<A_0,A_1\>)=0 \text{ and } f(x_1)(\<A_0,A_1\>)=1\,,$$ hence
$f(x_0) \ne f(x_1)$. Since we have $|\mathcal{A}| = |\mathcal{J}|$, we are done.
\end{proof}

Let us point out that the generality of admitting arbitrary sets in the $T_3$-splitting family $\mathcal{A}$
is only apparent. Indeed, if $X$ is normal and $\mc A\,\, T_3$-splits $\mc B$ then we can trivially find
a family $\mc U \subs \tau(X)$ such that $|\mc U| = |\mc A|$ and $\mc U$ also $T_3$-splits $\mc B$.
The following, slightly less trivial, result takes this idea one step further: It shows how to obtain
a $T_3$-splitting family $\mathcal{U}$ in a fixed open base $\mc V$ of $X$. However, in this result we may
guarantee $|\mc U| = |\mc A|$ only if $|\mc A| \ge \lin(X)$.

 \begin{theorem}\label{tm:t3cutsfrombase}
Assume that $X$ is normal and the infinite family $\mc A$ $\,T_3$-splits $\mc B$,
moreover $\mc V$ is an open base of $X$.
Then there is $\mc U\in \br \mc V; \le |\mc A|\cdot \lin(X);$ such that $\mc U$ also $T_3$-splits $\mc B$.
\end{theorem}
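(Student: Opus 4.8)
The plan is to refine the given $T_3$-splitting family $\mc A$ into a subfamily of the base $\mc V$ by replacing, pair by pair, the sets that witness splitting with small families of basic sets, preserving the witnessing property. Write $\lambda = |\mc A|$ and $\mu = \lin(X)$. Since $\mc A$ is infinite, the set $\mc J = \{\langle A_0, A_1\rangle \in \br\mc A;2; : \overline{A_0}\cap\overline{A_1} = \empt\}$ of ``good'' pairs (exactly as in the proof of Theorem \ref{tm:nwc-from-splitting-normal}) has size at most $\lambda$; so it will be enough to attach to each pair at most $\mu$ members of $\mc V$, giving $|\mc U| \le \lambda\cdot\mu = |\mc A|\cdot\lin(X)$ in the end.

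For a fixed pair $p = \langle A_0, A_1\rangle \in \mc J$ I would first use normality: as $\overline{A_0}$ and $\overline{A_1}$ are disjoint closed sets, choose disjoint open sets $G_0^p \supseteq \overline{A_0}$ and $G_1^p \supseteq \overline{A_1}$. Since a normal Hausdorff space is regular (points are closed, so a point and a disjoint closed set are separated by normality) and $\mc V$ is a base, every point of $\overline{A_i} \subs G_i^p$ has a basic neighbourhood with closure contained in $G_i^p$; hence $\mc V_i^p = \{V\in\mc V : \overline V \subs G_i^p\}$ is an open cover of $\overline{A_i}$.

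Next I would bring in the Lindelöf degree to trim this cover. The crucial point is that $\overline{A_i}$ is \emph{closed} in $X$, so $\lin(\overline{A_i}) \le \lin(X) = \mu$ (adjoin $X\setm\overline{A_i}$ to any cover of $\overline{A_i}$ by sets open in $X$ and apply the definition of $\lin(X)$). Thus $\mc V_i^p$ admits a subcover $\mc U_i^p \subs \mc V_i^p$ of $\overline{A_i}$ with $|\mc U_i^p|\le\mu$. Putting $\mc U = \bigcup_{p\in\mc J}(\mc U_0^p\cup\mc U_1^p)$ gives $\mc U\in\br\mc V;\le\lambda\cdot\mu;$, and it remains only to verify splitting. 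Given $B\in\mc B$, choose $p=\langle A_0,A_1\rangle\in\mc J$ with $A_i\cap B\ne\empt$ and points $x_i\in A_i\cap B$ for $i<2$; as $\mc U_i^p$ covers $A_i$ there is $U_i\in\mc U_i^p$ with $x_i\in U_i$, so $U_i\cap B\ne\empt$, while $\overline{U_0}\subs G_0^p$, $\overline{U_1}\subs G_1^p$ and $G_0^p\cap G_1^p=\empt$ force $\overline{U_0}\cap\overline{U_1}=\empt$.

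I expect the only genuine obstacle to be this trimming step: routing through the closed set $\overline{A_i}$ is essential, since that is exactly what lets $\lin(X)$ bound the size of the subcover. Covering the open set $G_i^p$, or the set $A_i$ itself, directly would not work, because the Lindelöf degree of an arbitrary (or even open) subspace is not controlled by $\lin(X)$. Everything else is routine bookkeeping with the cardinal arithmetic $\lambda\cdot\mu=\max(\lambda,\mu)$, valid as both cardinals are infinite (which also absorbs the harmless factor $2$ coming from $\mc U_0^p\cup\mc U_1^p$).
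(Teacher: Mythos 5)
Your proof is correct and takes essentially the same route as the paper's: for each pair in $\mc A$ with disjoint closures, use normality to separate the closures by open sets, cover each closure by at most $\lin(X)$ members of $\mc V$ whose closures stay inside the separating open set (the point you rightly emphasize — that $\overline{A_i}$ is closed, so $\lin(X)$ bounds the subcover — is exactly what makes this work), and take the union over all pairs. The paper's proof is just a condensed version of this, asserting directly that normality yields families $\mc U(A_0,A_1,i)\subs\mc V$ of size $\le\lin(X)$ covering $\overline{A_i}$ with $\overline{\bigcup \mc U(A_0,A_1,0)}\cap\overline{\bigcup \mc U(A_0,A_1,1)}=\empt$, which is the same construction with your disjoint open sets $G_0^p, G_1^p$ upgraded to have disjoint closures.
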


\begin{proof}
Since $X$  is normal,
for each pair $A_0,A_1\in \mc A$ with $\overline{A_0}\cap \overline{A_1}=\empt$ we can choose
 subsets $\mc U(A_0,A_1,0)$
and $\mc U(A_0,A_1,1)$ of $\mc V$ with cardinality $\le \lin(X)$ such that
\begin{displaymath}
 \overline{A_i}\subs \bigcup \mc U(A_0,A_1,i)
\end{displaymath}
for $i<2$ and
\begin{displaymath}
\overline{ \bigcup \mc U(A_0,A_1,0)}\cap
\overline{ \bigcup \mc U(A_0,A_1,1)}=\empt.
\end{displaymath}
Then
\begin{displaymath}
 \mc U=\bigcup\{\mc U(A_0,A_1,i): A_0, A_1\in \mc A, \overline{A_0}\cap \overline{A_1}=\empt, i=0,1\}
\end{displaymath}
$T_3$-splits $\mc B$.
Indeed, given any $B\in \mc B$ there is $\{A_0, A_1\}\in \br \mc A;2;$
such that $A_0\cap B\ne \empt\ne A_1\cap B$ and
$\overline {A_0}\cap \overline{A_1}=\empt$.
Pick $x_i\in B\cap A_i$. Then there is $U_i\in U(A_0,A_1,i)$
with $x_i\in U_i$ for $i<2$.
Then $\overline{U_0}\cap \overline{U_1}=\empt$
and $U_i\cap B\ne \empt$. This clearly shows that $\mc U$
$T_3$-splits $\mc B$, moreover it is obvious that $|\mc U| \le |\mc A|\cdot \lin(X)$.
\end{proof}

We also have the following somewhat analogous result for $T_2$-splitting.

\begin{theorem}\label{tm:t2cutsfrombase}
Assume that $\mc V$ is a $\pi$-base of  the space $X$ and $\mc A \subs \tau(X)$ $\,T_2$-splits some $\mc B \subs \tau^+(X)$.
Then there is $\mc U \subs \mc V$ with $|\mc U| \le |\mc A| \cdot \widehat{c}(X)$ such that $\mc U$ also $T_2$-splits $\mc B$.
Moreover, if $|\mc A| < \widehat{c}(X)$ then we may even have $|\mc U| < \widehat{c}(X)$.
\end{theorem}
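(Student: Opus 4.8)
The plan is to follow the architecture of the proof of Theorem \ref{tm:t3cutsfrombase}, but since normality is no longer available I would replace the Lindelöf-number covers used there by \emph{maximal disjoint} subfamilies of the $\pi$-base $\mc V$, whose sizes are then controlled by $\widehat{c}(X)$ rather than by $\lin(X)$. Concretely, for each pair $\{A_0,A_1\} \in \br \mc A;2;$ with $A_0 \cap A_1 = \empt$ and each $i<2$, I would fix, using Zorn's lemma, a family $\mc U(A_0,A_1,i) \subs \{V \in \mc V : V \subs A_i\}$ that is pairwise disjoint and maximal with this property, and then set
$$\mc U = \bigcup\{\mc U(A_0,A_1,i) : A_0,A_1 \in \mc A,\ A_0 \cap A_1 = \empt,\ i<2\} \subs \mc V.$$

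The heart of the argument is the observation that maximality forces $\bigcup \mc U(A_0,A_1,i)$ to be \emph{dense} in $A_i$: if some non-empty open $W \subs A_i$ missed it, the $\pi$-base property would hand us a $V \in \mc V$ with $V \subs W$, which could then be adjoined, contradicting maximality. Granting this, the verification that $\mc U$ $T_2$-splits $\mc B$ is immediate. Given $B \in \mc B$, choose disjoint $A_0,A_1 \in \mc A$ with $A_i \cap B \ne \empt$. Since $B$ is open, each $A_i \cap B$ is a non-empty open subset of $A_i$ and therefore meets the dense set $\bigcup \mc U(A_0,A_1,i)$, yielding some $U_i \in \mc U(A_0,A_1,i)$ with $U_i \cap B \ne \empt$. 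As $U_i \subs A_i$ and $A_0 \cap A_1 = \empt$, we get $U_0 \cap U_1 = \empt$, so $\{U_0,U_1\}$ witnesses the splitting of $B$. This is exactly the point where the hypothesis $\mc B \subs \tau^+(X)$ is needed, to ensure $A_i \cap B$ is open.

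For the cardinality, each $\mc U(A_0,A_1,i)$ is a disjoint family of open sets, hence has size $<\widehat{c}(X)$ by the very definition of $\widehat{c}$. Since there are at most $|\mc A|^2\cdot 2$ triples $(A_0,A_1,i)$ and $\widehat{c}(X)$ is infinite (crowded Hausdorff spaces carry infinite disjoint families), this gives $|\mc U| \le |\mc A|\cdot\widehat{c}(X)$. For the ``moreover'' clause I would invoke the Erdős–Tarski theorem, which guarantees that $\widehat{c}(X)$ is always a \emph{regular} cardinal. Then, assuming $|\mc A| < \widehat{c}(X)$, the index set of triples has size $<\widehat{c}(X)$, so $\mc U$ is a union of fewer than $\widehat{c}(X)$ families each of size $<\widehat{c}(X)$, whence by regularity $|\mc U| < \widehat{c}(X)$, as required.

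I expect the only genuinely non-routine ingredient to be this last appeal to the regularity of $\widehat{c}(X)$: without it, a union of $<\widehat{c}(X)$ sets of size $<\widehat{c}(X)$ could in principle reach $\widehat{c}(X)$ when that cardinal is singular, and the strict bound in the ``moreover'' part would break. Everything else reduces to the density-from-maximality lemma together with a direct count.
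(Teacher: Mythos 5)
Your proof is correct and takes essentially the same route as the paper's: the paper fixes, for each single $A \in \mc A$, one disjoint subfamily of $\mc V$ whose union is dense in $A$ (rather than indexing these families by disjoint pairs as you do — a purely cosmetic difference), and then both arguments rest on the same two ingredients, namely density-from-maximality via the $\pi$-base property and the regularity of $\widehat{c}(X)$ for the strict bound in the ``moreover'' clause.
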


\begin{proof}
Let us fix for each open set $A \in \mc A$ a disjoint family $\mc U_A \subs \mc V$ such that
its union is dense in $A$. This is possible because $\mc V$ is a $\pi$-base of $X$.
Then for any $A \in \mc A$ and $B \in \mc B$ we have $A \cap B \ne \emptyset$ iff $\cup \mc U_A \cap B \ne \emptyset$,
hence $$\mc U = \bigcup \{\mc U_A : A \in \mc A \}$$
is as required. Moreover, as $\widehat{c}(X)$ is always regular, $|\mc A| < \widehat{c}(X)$ implies that $|\mc U| < \widehat{c}(X)$.
\end{proof}
Note that in this result it is essential that the members of both $\mc A$ and $\mc B$ are open.

It is immediate from Theorem \ref{tm:nwc-from-splitting-normal} that if $X$ is normal and $\mc A$ $\,T_3$-splits
$\tau^+(X)$ (resp. $CR(X)$) then $X$ admits an NWC (resp. CP) Tychonov image
of weight $\le |\mc A|$.
This justifies the introduction and study of the following cardinal functions
that are naturally called open (resp. crowded) splitting numbers.

To start with, let us point out that any space (i.e. crowded $T_2$ space) $X$ admits a family of open
sets that $T_2$-splits $\tau^+(X)$ (resp. $CR(X)$). If, in addition, $X$ is also Urysohn or $\pi$-regular then
$X$ admits a family open sets that $T_3$-splits $\tau^+(X)$. Any Urysohn-space $X$ also admits  open families that $T_3$-split $CR(X)$.

\begin{definition} For any space $X$  we let
 \begin{displaymath}
 \ostwo(X)=\min\{|\mc U|: \mc U\subs \tau(X) \text{ and } \mc U \text{ $T_2$-splits $\tau^+(X)$}\},
 \end{displaymath}
 and
 \begin{displaymath}
 \costwo(X)=\min\{|\mc U|: \mc U\subs \tau_X \text{ and } \mc U \text{ $T_2$-splits  $CR(X)$}\}.
 \end{displaymath}
 \end{definition}

\begin{definition} For any space $X$ that admits a family of open sets which $T_3$-splits $\tau^+(X)$ (resp. $CR(X)$) we let
 \begin{displaymath}
 \osthree(X)=\min\{|\mc U|: \mc U\subs \tau_X \text{ and } \mc U \text{ $T_3$-splits $\tau^+(X)$}\},
 \end{displaymath}
 and
 \begin{displaymath}
 \costhree(X)=\min\{|\mc U|: \mc U\subs \tau_X \text{ and } \mc U \text{ $T_2$-splits  $CR(X)$}\}.
 \end{displaymath}
 \end{definition}

Let us remind the reader here that our main interest lies in the study of the weight of NWC (as well as CP and PO)
images of spaces. Now, it turns out that the splitting numbers we have just defined yield lower bounds for these.

\begin{theorem}\label{lm:os2mwcU}
  If  $Y$ is any NWC image of the space $X$ then
  \begin{displaymath}
     \ostwo(X)\le\we(Y).
  \end{displaymath}
Moreover, if  $Y$ is Urysohn then we even have
 \begin{displaymath}
     \osthree(X)\le\we(Y).
  \end{displaymath}
 \end{theorem}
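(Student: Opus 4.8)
The plan is to build, directly from the base of $Y$, an open family in $X$ witnessing the splitting number bound. Fix a base $\mc W$ of $Y$ with $|\mc W| = \we(Y)$ and set $\mc U = \{f^{-1}(W) : W \in \mc W\}$. Since $f$ is continuous every member of $\mc U$ is open in $X$, and clearly $|\mc U| \le \we(Y)$. It then remains only to check that $\mc U$ performs the required splitting, since $\ostwo(X) \le |\mc U|$ (resp. $\osthree(X) \le |\mc U|$) would follow at once.

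For the $T_2$ part I would take an arbitrary $U \in \tau^+(X)$. Because $f$ is NWC we have $|f[U]| > 1$, so we may pick distinct $y_0, y_1 \in f[U]$. As $Y$ is Hausdorff and $\mc W$ is a base, choose $W_0, W_1 \in \mc W$ with $y_i \in W_i$ and $W_0 \cap W_1 = \empt$. Put $A_i = f^{-1}(W_i) \in \mc U$. Then $A_0 \cap A_1 = f^{-1}(W_0 \cap W_1) = \empt$, while choosing $x_i \in U$ with $f(x_i) = y_i$ gives $x_i \in U \cap A_i$, so $A_i \cap U \ne \empt$ for $i < 2$. This is exactly the $T_2$-splitting requirement, whence $\ostwo(X) \le \we(Y)$.

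For the Urysohn part the only change is in the separation of $y_0$ and $y_1$. Since $Y$ is Urysohn there are open neighborhoods of $y_0$ and $y_1$ with disjoint closures; shrinking them inside base members yields $W_0, W_1 \in \mc W$ with $y_i \in W_i$ and $\overline{W_0} \cap \overline{W_1} = \empt$. The crucial observation is that disjoint closures transfer to the preimages: as $f^{-1}(\overline{W_i})$ is closed and contains $f^{-1}(W_i)$, we get $\overline{f^{-1}(W_i)} \subs f^{-1}(\overline{W_i})$, hence
\[
\overline{A_0} \cap \overline{A_1} \subs f^{-1}(\overline{W_0}) \cap f^{-1}(\overline{W_1}) = f^{-1}(\overline{W_0} \cap \overline{W_1}) = \empt .
\]
Combined with $A_i \cap U \ne \empt$ as above, this shows $\mc U$ $T_3$-splits $\tau^+(X)$, giving $\osthree(X) \le \we(Y)$; note that this simultaneously verifies that $X$ admits a $T_3$-splitting family of open sets, so that $\osthree(X)$ is indeed defined.

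The argument is essentially routine, and the one step deserving real care is precisely the passage from disjoint closures in $Y$ to disjoint closures of the preimages in $X$ — this is what separates the Urysohn ($T_3$) case from the merely Hausdorff ($T_2$) case. The inclusion $\overline{f^{-1}(W)} \subs f^{-1}(\overline{W})$ is the tool that makes it work, and it is the only place where continuity of $f$ is used beyond guaranteeing that the sets $f^{-1}(W)$ are open.
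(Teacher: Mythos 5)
Your proof is correct and follows essentially the same route as the paper: pull back a base of $Y$ of size $\we(Y)$ along $f$, use the NWC property to find two points of $f[U]$ for each $U\in\tau^+(X)$, and separate them by (closure-)disjoint base members, the disjointness of closures transferring to preimages via $\overline{f^{-1}(W)}\subs f^{-1}(\overline{W})$. Your added remark that this simultaneously shows $\osthree(X)$ is well defined for Urysohn images is a nice touch not made explicit in the paper.
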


\begin{proof}
Let $f$ be an NWC map of $X$ onto $Y$ and $\mc B$ be an open base of $Y$.
Then  $\mc C=\{f^{-1}U:U\in \mc B\}$ $T_2$-splits $\tau^+(X)$.
 Indeed, since $f$ is NWC, for any $G\in \tau^+(X)$ we can pick points $x,y\in G$
 such that $f(x)\ne f(y).$  Now, if $U,\,V \in \mc B$   are disjoint neighborhoods of $f(x)$
 and $f(y)$, respectively, then $f^{-1}U$ and $f^{-1}V$ are disjoint members of  $\mc C$
 and both intersect $G$.

If $Y$ is Urysohn then  $U,\,V \in \mc B$ can be chosen so that, in addition,
$\overline U\cap \overline V=\empt.$ But then, since $f$ is continuous,
$f^{-1}U$ and $f^{-1}V$ are neighborhoods of $x$
and $y$ with disjoint closures, both intersecting $G$. Consequently,  $\mc C$ $\,T_3$-splits $\tau^+(X)$.
 \end{proof}

Practically the same argument we just gave yields the following analogous result for the weight of CP images,
hence we omit its proof.

\begin{theorem}\label{lm:cosilew}
If  $Y$ is any CP image of the space $X$ then
  \begin{displaymath}
     \costwo(X)\le\we(Y).
  \end{displaymath}
Moreover, if  $Y$ is Urysohn then we even have
 \begin{displaymath}
     \costhree(X)\le\we(Y).
  \end{displaymath}
\end{theorem}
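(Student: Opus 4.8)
The plan is to run the proof of Theorem~\ref{lm:os2mwcU} essentially verbatim, with $CR(X)$ in place of $\tau^+(X)$ and with the CP hypothesis replacing the NWC one. The only spot where the argument needs adjusting is where we produce, inside the relevant set, two points having distinct images under $f$.

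First I would fix a CP surjection $f:X\to Y$ together with an open base $\mc B$ of $Y$, and form the family $\mc C=\{f^{-1}U:U\in \mc B\}$. Since $f$ is continuous, every member of $\mc C$ is open, so $\mc C\subs \tau(X)$, as the definitions of $\costwo$ and $\costhree$ require.

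Next I would verify that $\mc C$ $\,T_2$-splits $CR(X)$. Let $S\in CR(X)$ be arbitrary. This is the one place the argument differs from the NWC case: instead of exploiting that $f$ is non-constant on an open set, I would invoke characterization (b) of Theorem~\ref{tm:cp-equivalence}, which gives $|f[S]|>1$, so there are $x,y\in S$ with $f(x)\ne f(y)$. Choosing, by the Hausdorffness of $Y$, disjoint $U,V\in \mc B$ with $f(x)\in U$ and $f(y)\in V$, the sets $f^{-1}U$ and $f^{-1}V$ are disjoint members of $\mc C$, and they meet $S$ at $x$ and $y$ respectively. This shows $\costwo(X)\le |\mc C|\le |\mc B|$, and taking the infimum over all open bases $\mc B$ of $Y$ yields $\costwo(X)\le \we(Y)$.

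Finally, for the Urysohn addendum I would instead pick $U,V\in \mc B$ with $\overline U\cap \overline V=\empt$. Then, by continuity, $\overline{f^{-1}U}\subs f^{-1}(\overline U)$ and $\overline{f^{-1}V}\subs f^{-1}(\overline V)$, so these two closures are disjoint, while $f^{-1}U$ and $f^{-1}V$ still meet $S$ at $x$ and $y$. Hence $\mc C$ $\,T_3$-splits $CR(X)$, giving $\costhree(X)\le \we(Y)$. I do not expect a genuine obstacle here: the entire content is the translation of the crowdedness-preservation of $f$ into the existence of two $f$-separated points in each $S\in CR(X)$, which is precisely Theorem~\ref{tm:cp-equivalence}(b); everything else is identical to the proof of Theorem~\ref{lm:os2mwcU}, which is exactly why the authors felt free to omit it.
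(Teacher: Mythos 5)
Your proposal is correct and is exactly the argument the paper intends: the paper omits the proof precisely because it is ``practically the same'' as that of Theorem~\ref{lm:os2mwcU}, and your adaptation---replacing $\tau^+(X)$ by $CR(X)$ and using Theorem~\ref{tm:cp-equivalence}(b) to obtain two points of $S$ with distinct images---is the intended translation. The remaining steps (disjoint basic neighborhoods in the Hausdorff case, neighborhoods with disjoint closures via $\overline{f^{-1}U}\subs f^{-1}(\overline U)$ in the Urysohn case) match the paper's argument verbatim.
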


The last two results together with Theorem \ref{tm:nwc-from-splitting-normal} immediately imply the
following corollaries. The second corollary uses the fact that, by Theorem \ref{tm:cp-equivalence},
if the map $f : X \to Y$ is not constant on any member of $CR(X)$ then $f$ is CP.

\begin{corollary}\label{tm:os2nwcnormal}
For every normal space $X$ we have
 \begin{align}\notag
  \osthree(X)=&\min\{\we(Y): Y\text{ is a Urysohn NWC image of } X\}\\\notag
=&\min\{\we(Y): Y\text{ is a Tychonov NWC image of } X\}.
 \end{align}
\end{corollary}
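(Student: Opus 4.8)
The plan is to establish the cycle of inequalities
$$\osthree(X) \le m_U \le m_T \le \osthree(X),$$
where for brevity I write $m_U = \min\{\we(Y): Y\text{ is a Urysohn NWC image of } X\}$ and $m_T = \min\{\we(Y): Y\text{ is a Tychonov NWC image of } X\}$. The middle inequality $m_U \le m_T$ requires no work: every Tychonov space is Urysohn, so every Tychonov NWC image of $X$ is in particular a Urysohn NWC image; hence the class over which $m_U$ ranges contains the class defining $m_T$, and taking minima over a larger class can only decrease the value. For the lower bound $\osthree(X) \le m_U$ I would simply invoke the Urysohn half of Theorem \ref{lm:os2mwcU}, which asserts $\osthree(X) \le \we(Y)$ for every Urysohn NWC image $Y$; minimizing the right-hand side over all such $Y$ yields $\osthree(X) \le m_U$.

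The one substantive step is the upper bound $m_T \le \osthree(X)$, and this is exactly where Theorem \ref{tm:nwc-from-splitting-normal} enters. I would fix an open family $\mc A \subs \tau(X)$ that $T_3$-splits $\tau^+(X)$ and realizes the minimum, so that $|\mc A| = \osthree(X)$. Applying Theorem \ref{tm:nwc-from-splitting-normal} with $\mc B = \tau^+(X)$ then produces a continuous map $f : X \to [0,1]^{|\mc A|}$ with $|f[U]| > 1$ for every $U \in \tau^+(X)$, i.e. $f$ is NWC. Setting $Y = f[X]$, the space $Y$ is a subspace of $[0,1]^{\osthree(X)}$, hence Tychonov, and it is crowded because it is an NWC image (Proposition \ref{tm:nwc-equivalence}(b) gives that $f[X]$ is crowded). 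Finally $\we(Y) \le \we\bigl([0,1]^{\osthree(X)}\bigr) = \osthree(X)$, so $Y$ is a Tychonov NWC image witnessing $m_T \le \osthree(X)$.

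Combining the three inequalities forces $\osthree(X) = m_U = m_T$, as desired. There is no genuine obstacle here — the corollary is, as claimed, an immediate consequence of Theorem \ref{lm:os2mwcU} and Theorem \ref{tm:nwc-from-splitting-normal}. The only point deserving a remark is that one must know $\osthree(X)$ is infinite, both so that Theorem \ref{tm:nwc-from-splitting-normal} (which demands an infinite splitting family) applies and so that the weight computation $\we\bigl([0,1]^{\osthree(X)}\bigr) = \osthree(X)$ is valid; this holds because $X$ is crowded, so that no finite family of open sets can $T_3$-split $\tau^+(X)$.
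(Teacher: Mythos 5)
Your proof is correct and takes essentially the same route as the paper, which presents the corollary as an immediate consequence of Theorem \ref{lm:os2mwcU} (giving $\osthree(X)\le\we(Y)$ for every Urysohn NWC image $Y$) and Theorem \ref{tm:nwc-from-splitting-normal} applied to a minimal open $T_3$-splitting family (giving a Tychonov NWC image inside $[0,1]^{\osthree(X)}$), together with the trivial fact that Tychonov implies Urysohn. Your closing remark that $\osthree(X)$ must be infinite is a sound observation that the paper leaves implicit.
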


\begin{corollary}\label{tm:cos2nwcnormal}
For every normal space $X$ we have
 \begin{align}\notag
  \costhree(X)=&\min\{\we(Y): Y\text{ is a Urysohn CP image of } X\}\\\notag
=&\min\{\we(Y): Y\text{ is a Tychonov CP image of } X\}.
 \end{align}
\end{corollary}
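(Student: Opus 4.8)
The plan is to prove both equalities simultaneously by a three-way chain of inequalities, exactly mirroring the derivation of Corollary \ref{tm:os2nwcnormal} but with $\tau^+(X)$ systematically replaced by $CR(X)$. Abbreviate $a = \costhree(X)$, and let $b$ and $c$ denote the two minima, taken over the Urysohn, respectively Tychonov, CP images of $X$. Since every Tychonov space is Urysohn (separate two points by a map into $[0,1]$ and pull back disjoint subintervals), the Tychonov CP images of $X$ form a subclass of its Urysohn CP images, so minimizing weight over the larger class can only decrease the value; this gives $b \le c$. It therefore suffices to establish $a \le b$ and $c \le a$, which together with $b \le c$ force $a = b = c$.

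The inequality $a \le b$ is immediate from Theorem \ref{lm:cosilew}: every Urysohn CP image $Y$ of $X$ already satisfies $\costhree(X) \le \we(Y)$, so minimizing over all such $Y$ yields $a \le b$. (This simultaneously bounds the Tychonov images from below, as they too are Urysohn.)

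The only step carrying genuine content is $c \le a$, where I must exhibit a Tychonov CP image of weight at most $\costhree(X)$. First I would fix an open family $\mc A \subs \tau(X)$ with $|\mc A| = \costhree(X)$ that $T_3$-splits $CR(X)$; here $\mc A$ is infinite, since finitely many open sets cannot $T_3$-split every crowded subspace of a crowded space. Feeding $\mc A$ and $\mc B = CR(X)$ into Theorem \ref{tm:nwc-from-splitting-normal} --- whose normality hypothesis is precisely our assumption on $X$ --- produces a continuous map $f : X \to [0,1]^{|\mc A|}$ with $|f[S]| > 1$ for every $S \in CR(X)$. The one non-routine point, which I expect to be the crux, is the passage from this non-constancy to the CP property: being non-constant on each member of $CR(X)$ is exactly clause (b) of Theorem \ref{tm:cp-equivalence}, equivalent there to clause (a), i.e. to $f$ being CP. Granting this, $Y = f[X]$ is a CP image of $X$; as a subspace of a cube it is Tychonov, and $\we(Y) \le \we\big([0,1]^{|\mc A|}\big) = |\mc A| = a$ because $|\mc A|$ is infinite. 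Hence $c \le \we(Y) \le a$, the chain $a \le b \le c \le a$ closes, and both equalities follow.
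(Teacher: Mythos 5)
Your proof is correct and is essentially the paper's own argument: the paper derives this corollary in one line from Theorem \ref{lm:cosilew} (your lower bound $a \le b$ and $a \le c$), together with Theorem \ref{tm:nwc-from-splitting-normal} applied to a minimal open family $T_3$-splitting $CR(X)$ and the implication (b)$\Rightarrow$(a) of Theorem \ref{tm:cp-equivalence} (your construction of a Tychonov CP image of weight at most $\costhree(X)$). You simply make explicit the routine points the paper leaves implicit, namely that a finite open family cannot $T_3$-split $CR(X)$ and that Tychonov images are Urysohn, both of which you justify correctly.
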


We do not know if these results remain valid after weakening 'normal' to 'Tychonov'.
In particular, the following questions are open.

\begin{problem}
 Is there a  Tychonov space $X$ such that
 \begin{displaymath}
  \osthree(X)<
\min \{\we(Y): Y\text{ is a Tychonov NWC image of } X\} ?
 \end{displaymath}
 or
\begin{displaymath}
  \osthree(X)<
\min\{\we(Y):  Y\text{ is a Urysohn NWC image of } X\} ?
 \end{displaymath}
 \end{problem}

We do not have an analogue of Theorem \ref{tm:nwc-from-splitting-normal} for PO maps,
however the open splitting numbers $ \osi(X)$ turn out to be lower bounds even for the
$\pi$-weights of appropriate PO images.

\begin{lemma}\label{lm:os2pot2}
 If  $Y$ is any PO image of the space $X$ then
  \begin{displaymath}
     \ostwo(X)\le\pi(Y).
  \end{displaymath}
Moreover, if  $Y$ is $\pi$-regular then we even have
 \begin{displaymath}
     \osthree(X)\le\pi(Y).
  \end{displaymath}
\end{lemma}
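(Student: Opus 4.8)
The plan is to mimic the proof of Theorem \ref{lm:os2mwcU}, but with two changes: replace the open base of $Y$ by a $\pi$-base, and use pseudo-openness of $f$ (in the form of Proposition \ref{tm:po-equivalence}(c)) to locate the splitting sets. Accordingly, let $f$ be a PO map of $X$ onto $Y$ and fix a $\pi$-base $\mc B$ of $Y$ with $|\mc B| = \pi(Y)$. I claim that the family $\mc C = \{f^{-1}(U) : U \in \mc B\}$ of open subsets of $X$ already $T_2$-splits $\tau^+(X)$; since $|\mc C| \le |\mc B| = \pi(Y)$, this yields $\ostwo(X) \le \pi(Y)$.

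To verify the splitting property, fix $G \in \tau^+(X)$. The key point, and the one genuinely new ingredient compared with the NWC case, is that pseudo-openness guarantees via Proposition \ref{tm:po-equivalence}(c) that $W := \inte(\overline{f[G]})$ is a non-empty open subset of $Y$. As $Y$ is crowded, $W$ is crowded as well, so (using that $Y$ is Hausdorff) I can pick two disjoint non-empty open sets $W_0, W_1 \subs W$. Since $\mc B$ is a $\pi$-base, there are $U_0, U_1 \in \mc B$ with $U_i \subs W_i$; in particular $U_0 \cap U_1 = \empt$. It then remains to check that both $f^{-1}(U_0)$ and $f^{-1}(U_1)$ meet $G$. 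This uses the elementary fact that a non-empty open set contained in a closure meets the underlying set: since $U_i$ is a non-empty open subset of $W \subs \overline{f[G]}$, any point of $U_i$ lies in $\overline{f[G]}$ and has $U_i$ as a neighborhood, whence $U_i \cap f[G] \ne \empt$; picking $x_i \in G$ with $f(x_i) \in U_i$ gives $f^{-1}(U_i) \cap G \ne \empt$. Together with $f^{-1}(U_0) \cap f^{-1}(U_1) = f^{-1}(U_0 \cap U_1) = \empt$, this exhibits the required $T_2$-splitting pair in $\mc C$.

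For the \emph{moreover} part I assume in addition that $Y$ is $\pi$-regular and refine the choice above: after selecting disjoint open $W_0, W_1 \subs W$, I apply $\pi$-regularity inside each $W_i$ to obtain non-empty open $V_i$ with $\overline{V_i} \subs W_i$, so that $\overline{V_0} \cap \overline{V_1} = \empt$, and only then pick $U_i \in \mc B$ with $U_i \subs V_i$. Now $\overline{U_0} \cap \overline{U_1} \subs \overline{V_0} \cap \overline{V_1} = \empt$, and by continuity of $f$ we have $\overline{f^{-1}(U_i)} \subs f^{-1}(\overline{U_i})$, whence $\overline{f^{-1}(U_0)} \cap \overline{f^{-1}(U_1)} \subs f^{-1}(\overline{U_0} \cap \overline{U_1}) = \empt$. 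Thus $\mc C$ in fact $T_3$-splits $\tau^+(X)$, giving $\osthree(X) \le \pi(Y)$.

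I expect no serious obstacle here: the proof is routine once the right object is identified, and the only substantive step is recognizing that pseudo-openness must be used precisely through Proposition \ref{tm:po-equivalence}(c) to produce the non-empty open set $W \subs \overline{f[G]}$. This is exactly what lets a $\pi$-base, rather than a full base, suffice, and thereby bounds $\ostwo(X)$, resp. $\osthree(X)$, by $\pi(Y)$ instead of $\we(Y)$. The only minor care needed is to ensure that the splitting sets are genuinely chosen from the fixed $\pi$-base $\mc B$ and that, in the $T_3$ case, the disjoint-closure condition is arranged (via $\pi$-regularity) before passing down to members of $\mc B$.
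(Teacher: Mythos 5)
Your proof is correct and follows essentially the same route as the paper: pull back a $\pi$-base $\mc B$ of $Y$, use pseudo-openness (Proposition \ref{tm:po-equivalence}) to get the non-empty open set $W = \inte(\overline{f[G]})$, and then choose the splitting pair from $\mc B$ inside $W$. In fact, your handling of the \emph{moreover} part is slightly more faithful to the statement than the paper's own text, which at that point fixes open sets with disjoint closures by invoking the Urysohn property, whereas you derive the disjoint-closure pair from $\pi$-regularity exactly as the hypothesis demands.
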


\begin{proof}
If $\mc B$ is any $\pi$-base of $Y$ then  $\mc C=\{f^{-1}U:U\in \mc B\}$ will $T_2$-split $\tau^+(X)$.
Indeed, let $G\in \tau^+_X$. Since $f$ is PO, then $W = int(\overline{f[G]}) \in \tau^+(Y)$, hence, as $Y$ is crowded, we can pick disjoint elements
$U$  and $V$ of $\mc B$ such that $U \cup V \subs W$. But then $U \cap f[G] \ne \emptyset \ne V \cap f[G]$,
Consequently,  $f^{-1}U$ and $f^{-1}V$  $\,T_2$-split $G$.

If $Y$ is  Urysohn then we may first fix two open sets $U_0$ and $V_0$ with disjoint closures both intersecting $W$.
Then pick $U, V \in \mc B$ such that $U \subs W \cap U_0$ and $V \subs W \cap V_0$. Clearly, then
$f^{-1}U$ and $f^{-1}V$ will $\,T_3$-split $G$.
\end{proof}

\medskip

We have the following trivial relationships between the values of the splitting numbers,
whenever they are defined.

\begin{center}
 \begin{tikzpicture}
  \matrix (m) [matrix of math nodes,row sep=3em,column sep=4em,minimum width=2em]
  {
     {\costwo(X)} & {\costhree(X)}  \\
   {\ostwo(X)}   & {\osthree(X)}   \\};

\path [font=\scriptsize,draw=white] (m-1-1) edge
 [bend right=0] node {$\le$} (m-1-2);
\path [font=\scriptsize,draw=white] (m-2-1) edge
 [bend right=0] node[rotate=90] {$\le$} (m-1-1);
\path [font=\scriptsize,draw=white] (m-2-1) edge
 [bend right=0] node {$\le$} (m-2-2);
\path [font=\scriptsize,draw=white] (m-2-2) edge
 [bend right=0] node[rotate=90] {$\le$} (m-1-2);
 \end{tikzpicture}

\end{center}

In \ref{ex:easy} below we shall show that no other relation holds between these splitting numbers,
even for compact spaces. To this end we need some preparation.

\begin{theorem}\label{tm:prod-osi}
If $X$ and $Y$ are appropriate spaces then
 $$\osi(X\times Y)\le \min(\osi(X),\osi(Y))$$ for  $i=2,3$.
If both $X$ and $Y$ are regular Lindelöf spaces then
$$\osthree(X\times Y)=\min(\osthree(X),\osthree(Y)).$$
\end{theorem}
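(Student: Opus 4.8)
The plan is to prove the two claims separately, as they have quite different characters.

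For the inequality $\osi(X\times Y)\le \min(\osi(X),\osi(Y))$, the key observation is that the splitting property is inherited by products via projections. First I would assume without loss of generality that the minimum is attained on $X$, so fix an open family $\mc U\subs\tau(X)$ with $|\mc U|=\osi(X)$ that $T_i$-splits $\tau^+(X)$ (the argument for $\cosi$ and $CR$ is parallel). Let $\pi_X:X\times Y\to X$ be the projection and set $\mc U^*=\{\pi_X^{-1}[U]:U\in\mc U\}=\{U\times Y:U\in\mc U\}$. This is an open family in $X\times Y$ of the same cardinality. I claim $\mc U^*$ $T_i$-splits $\tau^+(X\times Y)$. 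Indeed, given any nonempty open $W\subs X\times Y$, its projection $\pi_X[W]$ is a nonempty open subset of $X$, so there are $U_0,U_1\in\mc U$ splitting it (disjoint, resp. with disjoint closures, both meeting $\pi_X[W]$); then $U_0\times Y$ and $U_1\times Y$ are disjoint (resp. have disjoint closures, since $\overline{U_0\times Y}=\overline{U_0}\times Y$) and both meet $W$. The only point requiring the ``appropriate spaces'' hypothesis is that for $i=3$ we need $X\times Y$ to admit \emph{some} $T_3$-splitting family for the splitting numbers to be defined, which is why the theorem is stated with that proviso.

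For the reverse inequality $\min(\osthree(X),\osthree(Y))\le\osthree(X\times Y)$ under the regular Lindelöf hypothesis, I would start from an optimal open family $\mc W\subs\tau(X\times Y)$ with $|\mc W|=\osthree(X\times Y)$ that $T_3$-splits $\tau^+(X\times Y)$. The goal is to project this splitting structure down to one of the factors. Since $X$ and $Y$ are regular and Lindelöf, hence normal, I would like to invoke Theorem \ref{tm:t3cutsfrombase} to replace $\mc W$ by a splitting family drawn from the canonical base of basic open rectangles $U\times V$, paying a cost of at most $\lin(X\times Y)$; because products of Lindelöf spaces need not be Lindelöf in general, I would instead work directly and argue that, after refining, we may assume every member of $\mc W$ is a basic rectangle without increasing the cardinality beyond $\osthree(X\times Y)$ (using Lindelöfness of the factors to keep covers small). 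Then I would look at the ``coordinate traces'': for each rectangle $U\times V$ in the refined family, record $U$ and $V$. The crux is a dichotomy — either the first coordinates $\{U\}$ already $T_3$-split $\tau^+(X)$, giving $\osthree(X)\le\osthree(X\times Y)$, or the second coordinates do the analogous job for $Y$; in at least one factor the projected family must split, because a basic rectangle $U\times V$ meets a rectangle $W_1\times W_2$ with disjoint closure from another only if the corresponding coordinate pieces already separate in one factor.

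The main obstacle I anticipate is precisely this last dichotomy: it is not a priori clear that the coordinatewise projections of a $T_3$-splitting family for the product must $T_3$-split one of the factors, since the separation witnessing the split in $X\times Y$ could genuinely use both coordinates simultaneously (two rectangles can have disjoint closures because they are separated in the first coordinate \emph{or} the second, and different pairs might use different coordinates). To handle this I would fix a nonempty open $G\subs X$ and consider the open set $G\times Y$ in the product; applying the splitting family to a well-chosen basic rectangle inside $G\times Y$ yields two rectangles with disjoint closures meeting it, and disjointness of closures of rectangles $\overline{U_0}\times\overline{V_0}$ and $\overline{U_1}\times\overline{V_1}$ forces $\overline{U_0}\cap\overline{U_1}=\empt$ or $\overline{V_0}\cap\overline{V_1}=\empt$. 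The difficulty is that the coordinate in which separation occurs may vary with $G$, so a single factor need not work for all $G$ at once; overcoming this is where the Lindelöf hypothesis should be essential, allowing a covering/counting argument to show that one of the two coordinate families, of size at most $\osthree(X\times Y)$, suffices uniformly for all nonempty open subsets of the corresponding factor, thereby delivering the matching lower bound.
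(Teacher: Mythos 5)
Your first inequality is correct and is exactly the paper's argument: cross a splitting family of one factor with the whole other factor. The problem is in the second part. There you set up the same reduction the paper performs (via Theorem \ref{tm:t3cutsfrombase}, using that regular Lindelöf spaces are normal) to get an optimal $T_3$-splitting family $\mc A$ consisting of basic rectangles $A=U_A\times V_A$, and you correctly identify the crux: either $\{U_A:A\in\mc A\}$ must $T_3$-split $\tau^+(X)$ or $\{V_A:A\in\mc A\}$ must $T_3$-split $\tau^+(Y)$. But you then leave this dichotomy unproved, observing that the separating coordinate ``may vary with $G$'' and gesturing at a covering/counting argument in which Lindelöfness would restore uniformity. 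That is a genuine gap, and your proposed route points in the wrong direction: no counting and no Lindelöfness enter at this stage (Lindelöfness, through normality, is needed only for the reduction to rectangles).

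The missing idea is a simple quantifier switch. Suppose $\{U_A:A\in\mc A\}$ fails to $T_3$-split some \emph{single, fixed} $U\in\tau^+(X)$. Now let $V\in\tau^+(Y)$ be arbitrary and pick $A,B\in\mc A$ that $T_3$-split $U\times V$. Then $U_A$ and $U_B$ both meet $U$, so by the choice of $U$ we cannot have $\overline{U_A}\cap\overline{U_B}=\emptyset$. Since
$$\overline{A}\cap\overline{B}=\big(\overline{U_A}\cap\overline{U_B}\big)\times\big(\overline{V_A}\cap\overline{V_B}\big)=\emptyset,$$
it follows that $\overline{V_A}\cap\overline{V_B}=\emptyset$, while $V_A$ and $V_B$ both meet $V$. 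As $V$ was arbitrary, $\{V_A:A\in\mc A\}$ $T_3$-splits all of $\tau^+(Y)$, giving $\min(\osthree(X),\osthree(Y))\le|\mc A|=\osthree(X\times Y)$. So one failure in $X$ forces the $Y$-coordinates to do the whole job uniformly; the coordinate never gets to vary once a single bad $G$ is fixed. (A side remark: your parenthetical that the first-part projection argument works ``in parallel'' for $\cosi$ and $CR$ is false --- projections of crowded sets need not be crowded, and indeed $\cosi$ behaves as a maximum on products, cf.\ Theorem \ref{tm:prod-cosi} --- but that lies outside the statement at hand.)
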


\begin{proof}
 If $\mc A$  $T_i$-splits $\tau^+(X)$ then
 $\{A\times Y: A\in \mc A\}$  $T_i$-splits $\tau^+(X\times Y)$. By symmetry, this clearly implies
$\osi(X\times Y)\le \min(\osi(X),\osi(Y)$.

Assume now that both $X$ and $Y$ are regular Lindelöf and 
$\mc A \subs \tau(X\times Y)$ is chosen so that $\mc A$ $\,T_3$-splits $\tau^+(X\times Y)$ and $|\mc A| = \osthree(X\times Y)$.
Since regular Lindelöf spaces are normal, we may apply
Theorem \ref{tm:t3cutsfrombase}  to assume that every $A\in \mc A$ has the form $A = U_A\times V_A$ with $U_A \in \tau(X)$ and $V_A \in \tau(Y)$.
We claim that either $\{U_A:A\in \mc A\}$ $\,T_3$-splits $\tau^+(X)$,
or $\{V_A:A\in \mc A\}$ $\,T_3$-splits $\tau ^+(Y)$.
Indeed, assume that $\{U_A:A\in \mc A\}$ does not $T_3$-split some $U \in \tau^+(X)$.
Since for every $V \in \tau^+(Y)$ there are $A, B \in \mc A$ which $T_3$-split $U \times V$,
then we must have that $V_A$ and $V_B$ $\,T_3$-split $V$. Consequently $\{V_A:A\in \mc A\}$ $\,T_3$-splits $\tau ^+(Y)$.
But then $\osthree(X\times Y)\ge \min(\osthree(X),\osthree(Y))$, completing the proof.
\end{proof}

Interestingly, the behavior on products of the crowded splitting numbers is quite different.

\begin{theorem}\label{tm:prod-cosi}
If $X$ and $Y$ are appropriate spaces then
 $$\cosi(X\times Y)=\max(\cosi(X),\cosi(Y))$$ for  $i=2,3$.
\end{theorem}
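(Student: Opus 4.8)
The plan is to prove the two inequalities $\max(\cosi(X),\cosi(Y)) \le \cosi(X\times Y)$ and $\cosi(X\times Y) \le \max(\cosi(X),\cosi(Y))$ separately, using a "slicing" argument for the first and an explicit splitting family together with a dichotomy for the second. Two preliminary remarks organize the bookkeeping. First, the product is again appropriate, so that $\cosi(X\times Y)$ is defined: for $i=2$ every crowded Hausdorff space qualifies, while for $i=3$ the relevant hypothesis (admitting a $T_3$-splitting family of $CR$, e.g.\ Urysohn-ness) is productive. Second, all these splitting numbers are infinite, since no finite family of open sets can $T_i$-split every crowded subspace; hence any cardinal sum appearing below collapses to a maximum.

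For the lower bound $\cosi(X)\le\cosi(X\times Y)$ (and symmetrically for $Y$), I would fix a family $\mc A\subs\tau(X\times Y)$ that $T_i$-splits $CR(X\times Y)$ with $|\mc A|=\cosi(X\times Y)$, fix any point $y_0\in Y$, and pass to the vertical slices $W^{y_0}=\{x\in X : (x,y_0)\in W\}$ for $W\in\mc A$, each open in $X$. Given $S\in CR(X)$, the homeomorphic copy $S\times\{y_0\}$ lies in $CR(X\times Y)$, so some $W_0,W_1\in\mc A$ $T_i$-split it; I then claim that the slices $W_0^{y_0},W_1^{y_0}$ $T_i$-split $S$. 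That each slice meets $S$ is immediate. The only point needing care is the disjointness in the $T_3$ case: from $W_j^{y_0}\times\{y_0\}\subs W_j$ and $\overline{A\times\{y_0\}}=\overline A\times\{y_0\}$ one gets $\overline{W_j^{y_0}}\times\{y_0\}\subs\overline{W_j}$, so disjointness of the $\overline{W_j}$ forces disjointness of the $\overline{W_j^{y_0}}$.

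For the upper bound I would take witnessing families $\mc U\subs\tau(X)$ and $\mc W\subs\tau(Y)$ that $T_i$-split $CR(X)$ and $CR(Y)$ with $|\mc U|=\cosi(X)$ and $|\mc W|=\cosi(Y)$, and claim that $$\mc F=\{U\times Y : U\in\mc U\}\cup\{X\times W : W\in\mc W\}$$ $T_i$-splits $CR(X\times Y)$; since $|\mc F|\le|\mc U|+|\mc W|=\max(\cosi(X),\cosi(Y))$, this suffices. The reduction I would use is that it is enough to $T_i$-split some nonempty crowded subset $S'\subs S$, because any pair meeting $S'$ also meets $S$. Given $S\in CR(X\times Y)$, I split according to whether $\pi_X[S]$ is crowded. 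If it is, then $\mc U$ $T_i$-splits $\pi_X[S]$ by some $U_0,U_1$, and $U_0\times Y,\,U_1\times Y$ $T_i$-split $S$ (disjointness and disjoint closures pass through $\,\cdot\times Y$ since $\overline{U\times Y}=\overline U\times Y$). If $\pi_X[S]$ has an isolated point $x^*$, I pick an open $U\ni x^*$ with $U\cap\pi_X[S]=\{x^*\}$; then $S'=S\cap(U\times Y)$ is a nonempty open subset of $S$, hence crowded, and $S'\subs\{x^*\}\times Y$, so $\pi_Y\upharpoonright S'$ is a homeomorphism and $\pi_Y[S']\in CR(Y)$. Now $\mc W$ $T_i$-splits $\pi_Y[S']$ by some $W_0,W_1$, and $X\times W_0,\,X\times W_1$ $T_i$-split $S'$, hence $S$.

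The main obstacle, and the reason the answer here is $\max$ rather than the $\min$ of Theorem \ref{tm:prod-osi}, is precisely this dichotomy in the upper bound: a crowded subspace of the product need not project to a crowded set in either coordinate, but one can always retreat to a crowded subset that does. The isolated-point case is where the asymmetry appears, since a crowded subspace squeezed onto a single vertical fiber is invisible to the sets $U\times Y$ and can only be handled by the sets $X\times W$; consequently both factors' splitting families are genuinely needed, forcing the maximum. I would then double-check the routine bookkeeping (that each chosen pair indeed meets $S$, and that closures behave as claimed in the $T_3$ case), but expect no surprises there.
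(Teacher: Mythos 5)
Your proposal is correct and follows essentially the same route as the paper: the same witnessing family $\{U\times Y : U\in\mc U\}\cup\{X\times W : W\in\mc W\}$ for the upper bound, and the same slicing argument (restricting the splitting family to a copy $X\times\{y_0\}$) for the lower bound. The only difference is in how the upper bound is verified: the paper invokes the fact that the two projections of a crowded subset of $X\times Y$ cannot both be scattered (so one projection contains a crowded set to split), whereas you give an elementary, self-contained dichotomy --- either $\pi_X[S]$ is crowded, or it has an isolated point and then a nonempty open (hence crowded) piece of $S$ sits inside a single fiber and is handled through $Y$ --- which establishes the same conclusion.
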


\begin{proof}
If $E\subs X\times Y$ is crowded then its projections $\pi_X(E)$ and $\pi_Y(E)$ cannot both be scattered.
Consequently, if $\mc U$ $T_i$-splits $CR(X)$ and $\mc V$  $T_i$-splits  $CR(Y)$
then $$\{U\times Y,\, X\times V : U\in \mc U,\,V\in \mc V\}$$ will $T_i$-split $CR(X\times Y).$
This implies $\cosi(X\times Y)\le \max(\cosi(X),\cosi(Y))$.

 On the other hand,  if $\mc W$ $\,T_i$-splits $CR(X\times Y)$ then, in particular,
$$\{W\cap (X\times \{y\}): W \in \mc W\}$$ clearly $T_i$-splits $CR(X\times \{y\})$ for every point $y \in Y$. Hence
$\cosi(X)\le \cosi(X\times Y)$, and by symmetry, we are done.
\end{proof}

Now, we are ready to present the promised compact examples that separate the corresponding values of our splitting numbers.
As usual, $D({\kappa})$ denotes the discrete space of size $\kappa$ and $\alpha(E)$ denotes the one-point
compactification of the locally compact space $E$.

\begin{example}\label{ex:easy}
For any fixed cardinal ${\kappa}> {\omega}$ consider the following compact spaces:
\begin{displaymath}
X=D(2)^{\kappa},\ Y= {\alpha}([0,1]\times D({\kappa})),
\ Z=X \oplus Y\text{ and }\ T=Y\times [0,1] .
\end{displaymath}
Then
\begin{enumerate}[(i)]
  \item $\ostwo(X)=\osthree(X)={\omega}<{\kappa}=
  \costwo(X)=\costhree(X)$,
  \item $\ostwo(Y)=\costwo(Y)={\omega}<{\kappa}=\osthree(Y)=\costhree(Y)$,
  \item $\ostwo(Z)={\omega}<{\kappa}=
  \costwo(Z)=\osthree(Z)=\costhree(Z)$,
  \item $\ostwo(T)=\costwo(T)=\osthree(T)={\omega}<{\kappa}=\costhree(T)$.
\end{enumerate}
\end{example}
\begin{proof}
(i)
If $\mc U \subs \tau^+(X)$ $\,T_2$-splits
some family $\mc B$ then clearly so does
\begin{displaymath}
  \mc V=\{\inte(\overline{U}):U\in \mc U\} \subs RO(X),
 \end{displaymath}
and clearly $|\mc V| \le |\mc U|$.
But any regular open set in $D(2)^{\kappa}$ depends only on
countably many coordinates. Hence any family of regular open
sets of cardinality less than ${\kappa}$ depends only on a set of coordinates $J$ with $|J| < \kappa$ and so it does not split e.g.
the crowded set $\{x \in D(2)^{\kappa} : \forall\,\alpha \in J\,(x(\alpha) = 0)\}$.  Thus we have  $\costwo(X)= \costhree(X) ={\kappa}$.

On the other hand, the countable family
\begin{displaymath}
  \big\{\{x\in D(2)^{\kappa}: x(n)=i\} : n<{\omega}, i<2\big\}
 \end{displaymath}
of clopen sets clearly $T_3$-splits $\tau^+(X)$ and thus we have $\ostwo(X) = \osthree(X) = {\omega}$.

\smallskip

\noindent (ii)
Clearly, if $S \subs Y$ is crowded then so is $S \cap ([0,1] \times \{\alpha\})$ for some $\alpha \in \kappa$.
Hence if $\{U_n:n\in {\omega}\}$ is any countable base of $[0,1]$ then $\{U_n\times D({\kappa}):i<{\omega}\}$ is a
countable $T_2$-splitting family for $CR(Y)$. Thus we have $\ostwo(Y)=\costwo(Y)={\omega}$.

Now, assume that $\,\mc U \subs \tau^+(Y)$ $\,T_3$-splits $\tau^+(Y)$ and
consider the family
$$\mc V=\{U\in \mc U: \exists I_U\in \br {\kappa};<{\omega};
\,(U\subs [0,1]\times I_U)\}.$$
We claim that $|\mc V| \ge {\kappa}$.
Indeed, otherwise for  $J=\bigcup\{I_U:I\in \mc V\}$ we had $|J|<{\kappa}$.
But then $W=[0,1]\times ({\kappa}\setm J) \in \tau^+(Y)$ and for any $U \in \mc U$ with $U\cap W\ne \empt$ we have $U \notin \mc V$,
hence the point at infinity of $Y$ belongs to $\overline{U}$. This, however would contradict our assumption that
$\,\mc U \subs \tau^+(Y)$ $\,T_3$-splits $\tau^+(Y)$.
Consequently, we have $|\mc U| \ge |\mc V| \ge \kappa$, and hence $\osthree(Y)=\costhree(Y) = \kappa$.

\smallskip

\noindent(iii) This is straightforward.

\smallskip

\noindent(iv)
By Theorem \ref{tm:prod-osi} we have
$\omega \le \ostwo(T)\le\osthree(T)\le \osthree([0,1])={\omega}$.
Next, Theorem \ref{tm:prod-cosi} implies both $\costwo(T)=\max(\costwo(Y),\costwo([0,1]))={\omega}$ and $\costhree(T)=\max(\costhree(Y),
\costhree([0,1]))={\kappa}$.
\end{proof}

\section{Shattering, splitting, and cellularity}

The aim of this section is to present the first of our two main results, Theorem A
from the abstract. The crucial step will be achieved by establishing that the cellularity number $\widehat{c}(X)$
is an upper bound for the splitting number $\ostwo(X)$ (resp. $\osthree(X)$ ) for all (resp. all $\pi$-regular) spaces.
This, in turn, will make use of the concepts of {\bf shattering family} and {\bf shattering number}
that we shall define below.

\begin{definition}\label{def:sh}
(i) For any space $X$ let
$$\ce(X) = \{\mc S \subs \RO^+(X) : \mc S \text{ is disjoint} \}.$$
(ii) $\mathfrak{F} \subs \ce(X)$ is called a shattering family for $X$
if for for any $U \in \tau^+(X)$ there is $\mc S \in \mathfrak{F}$ such that
$U$ intersects at least two members of $\mc S$, moreover
$$\sh(X) = \min \{|\mathfrak{F}| : \mathfrak{F} \text { is a shattering family for } X  \}$$
is called the shattering number of $X$.
\end{definition}

Every space $X$ (being crowded $T_2$) admits a shattering family.
Indeed, if  $\mc U \subs \RO^+(X)$ is a $T_2$-splitting family for $X$ then the family of pairs
$$\{\,\{U,\,X \setm \overline{U}\} : U \in \mc U \}$$ is a shattering family for $X$.
This, of course, also shows that $\ostwo(X) \ge \sh(X)$. On the other hand,
it is obvious that if $\mathfrak{F}$ is a shattering family for $X$ then $\cup \mathfrak{F}$
$\,T_2$-splits $\tau^+(X)$, hence $\ostwo(X) \le \sh(X) \cdot \cel(X)$.
If $X$ is also $\pi$-regular then
for any $R \in \cup \mathfrak{F}$ we may fix a collection $\mc S_R \subs \RO^+(X)$
such that for each $U \in \mc S_R$ we have $\overline{U} \subs R$, we have $\overline{U} \cap \overline{V} = \emptyset$  
for distinct $U,\,V \in \mc S_R$, and $\cup \mc S_R$ is dense in $R$. Now, if we replace any $\mc S \in \mathfrak{F}$
with $\widetilde{\mc S} = \bigcup \{\mc S_R : R \in \mc S\}$  then the union of $\widetilde{\mathfrak{F}} = \{\widetilde{\mc S} : \mc S \in \mathfrak{F}\}$
even $\,T_3$-splits $\tau^+(X)$, hence in this case we have  $\osthree(X) \le \sh(X) \cdot \cel(X)$.

Actually, for $\ostwo(X)$ the following stronger result holds. Recall that for any cardinal $\kappa$ its logarithm
is defined by $\log \kappa = \min \{\lambda : 2^\lambda \ge \kappa \}.$

\begin{theorem}\label{tm:log}
For any space $X$ we have $\ostwo(X) \le \sh(X) \cdot \log( \cel(X)).$
\end{theorem}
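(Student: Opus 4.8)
The plan is to sharpen the already-observed bound $\ostwo(X) \le \sh(X) \cdot \cel(X)$ by replacing, inside each member $\mc S$ of a minimal shattering family, its $\le \cel(X)$ many regular open sets with only $\log(\cel(X))$ many \emph{unions} of members of $\mc S$ that still suffice to separate any two of them. The mechanism is an injective binary encoding of $\mc S$, exploiting that $\cel(X) \le 2^{\log(\cel(X))}$ by the definition of $\log$.

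First I would fix a shattering family $\mathfrak{F}$ with $|\mathfrak{F}| = \sh(X)$ and put $\lambda = \log(\cel(X))$. For each $\mc S \in \mathfrak{F}$ we have $|\mc S| \le \cel(X) \le 2^\lambda$, since $\mc S$ is a disjoint family of open sets; hence we may fix an injection $R \mapsto c_R$ from $\mc S$ into $2^\lambda$. For $\alpha < \lambda$ and $i < 2$ I then define the open set
$$W^{\mc S}_{\alpha, i} = \bigcup \{ R \in \mc S : c_R(\alpha) = i \}.$$
The key observation is that for fixed $\mc S$ and $\alpha$ the sets $W^{\mc S}_{\alpha,0}$ and $W^{\mc S}_{\alpha,1}$ are disjoint: a common point would lie in two members of $\mc S$ having different $\alpha$-th code values, hence in two distinct members of the disjoint family $\mc S$, which is impossible. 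Moreover, whenever $R_0 \ne R_1$ in $\mc S$, injectivity of the coding yields an $\alpha < \lambda$ with $c_{R_0}(\alpha) \ne c_{R_1}(\alpha)$, and then $R_0$ and $R_1$ lie on opposite sides $W^{\mc S}_{\alpha,0}$ and $W^{\mc S}_{\alpha,1}$.

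I would then take
$$\mc U = \{ W^{\mc S}_{\alpha,i} : \mc S \in \mathfrak{F},\ \alpha < \lambda,\ i < 2 \},$$
a family of open sets of cardinality at most $|\mathfrak{F}| \cdot \lambda \cdot 2 = \sh(X) \cdot \log(\cel(X))$, the factor $2$ being absorbed because $\lambda \ge \omega$ (as $X$ is crowded, $\cel(X) \ge \omega$, so $\log(\cel(X))$ is infinite). To check that $\mc U$ $\,T_2$-splits $\tau^+(X)$, take $U \in \tau^+(X)$; by the shattering property choose $\mc S \in \mathfrak{F}$ and distinct $R_0, R_1 \in \mc S$ both meeting $U$, separate them as above at some coordinate $\alpha$, and conclude that the disjoint sets $W^{\mc S}_{\alpha,0}, W^{\mc S}_{\alpha,1} \in \mc U$ each meet $U$, since each contains the corresponding $R_j$.

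I do not anticipate a genuine obstacle beyond the encoding idea itself, which is the whole content of the improvement. The only points requiring a word of care are entirely routine: some $W^{\mc S}_{\alpha,i}$ may be empty, but empty members are harmless in a $T_2$-splitting family and may simply be discarded; and the cardinal arithmetic $\sh(X)\cdot\lambda\cdot 2 = \sh(X)\cdot\lambda$ uses only the infinitude of $\lambda$. Thus the verification is straightforward and the bound $\ostwo(X) \le \sh(X) \cdot \log(\cel(X))$ follows.
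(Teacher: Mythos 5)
Your proof is correct and is essentially the paper's own argument: your injection $R \mapsto c_R \in 2^\lambda$ is exactly a separating family of $\log(\cel(X))$ binary partitions of each cell $\mc S$ (which the paper invokes as a known fact, via $\den(D(2)^{\kappa}) = \log\kappa$), and your sets $W^{\mc S}_{\alpha,i}$ are precisely the unions of partition classes that the paper takes as its $T_2$-splitting family $\mc U$. The only difference is cosmetic: you build the encoding directly from the definition of $\log$ instead of citing the fact about dense subsets of Cantor cubes.
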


\begin{proof}
Let $\{\mc S_\alpha : \alpha < \sh (X)\}$ be a shattering family for $X$ and note that for
each $\alpha < \sh (X)\}$ we have $|\mc S_\alpha| \le \kappa = \cel(X).$ We may assume without
any loss of generality that each $\mc S_\alpha$ is infinite.

Now, it is well-known
that for every infinite set $S$ we have $\log |S|$ many binary partitions of $S$, say $\mc P$, such that for any two distinct
elements $x\,,y$ of $S$ there is some $P = \{P_0,P_1\} \in \mc P$ for which $x \in P_0$ and $y \in P_1$.
This follows e.g. from the fact that $\den (D(2)^\kappa) = \log \kappa$ for any $\kappa \ge \omega$, see e.g. \cite{Ju}.

So, let us fix for each $\alpha < \sh (X)$ such a system $\mc P_\alpha$ of binary partitions of $\mc S_\alpha$ with
$|\mc P_\alpha| = \log |\mc S_\alpha| \le \log \cel(X)$.  
Clearly, then $$\mc U = \{\cup P_i : P \in \cup_{\alpha < \sh (X)}\,\mc P_\alpha,\,\,i < 2\} \subs \tau(X)$$
$T_2$-separates $\,\tau^+(X)$ and $|\mc U| \le \sh(X) \cdot \log( \cel(X)).$
\end{proof}

We now present the crucial result of this section.

\begin{theorem}\label{tm:sh_le}
We have $\sh(X) \le \widehat{c}(X)$ for every space $X$. Moreover, if $\sh(X) = \widehat{c}(X)$ then there is
a $\widehat{c}(X)$-Suslin tree (or line).
\end{theorem}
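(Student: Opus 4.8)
The plan is to build, by transfinite recursion, a single tree $T$ of nonempty regular open sets ordered by reverse inclusion, and then read off from it a shattering family of size $\le\widehat{c}(X)$. Write $\kappa=\widehat{c}(X)$ and recall that $\kappa$ is regular. I put the whole space $X$ (its own regular open hull) at the root. Given a node carrying the set $R$, I let its set of children $\mathcal{C}(R)$ be a maximal disjoint subfamily of $\{S\in\RO^+(X):S\subsetneq R\}$; by maximality $\bigcup\mathcal{C}(R)$ is dense in $R$, and since $X$ is crowded $|\mathcal{C}(R)|\ge 2$. At a limit level $\lambda$, for each branch $b=\langle R_\xi:\xi<\lambda\rangle$ through the part of $T$ already built I form the regularized intersection $R_b=\inte\overline{\inte\bigcap_{\xi<\lambda}R_\xi}$ and keep it as a node exactly when it is nonempty, noting $R_b\subseteq R_\mu$ for every $\mu<\lambda$. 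The recursion continues as long as nonempty nodes appear, and the candidate shattering family is $\mathfrak{F}=\{\mathcal{C}(R):R\in T\}$.

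Two facts about $T$ come from cellularity. First, any two incomparable nodes carry disjoint sets, since they lie below distinct, hence disjoint, children of their meet; thus every antichain of $T$ is a disjoint family in $\RO^+(X)$ and so has size $<\kappa$, and in particular every level has size $<\kappa$. Second, along any branch each successor step discards a sibling: if $R_{\xi+1}\in\mathcal{C}(R_\xi)$ is the chosen child, pick another child $D_\xi\in\mathcal{C}(R_\xi)$, so that $D_\xi\subseteq R_\xi$ is disjoint from $R_{\xi+1}\supseteq R_\eta$ for all $\eta>\xi$; hence $\{D_\xi:\xi\text{ a successor on the branch}\}$ is disjoint, forcing every branch to have length $<\kappa$. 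Consequently $\htt(T)\le\kappa$, and as $\kappa$ is regular with all levels of size $<\kappa$ we obtain $|\mathfrak{F}|\le|T|\le\kappa$.

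The heart of the argument is that $\mathfrak{F}$ shatters $X$. Suppose not, and fix $U\in\tau^+(X)$ meeting at most one member of each $\mathcal{C}(R)$. I trace $U$ down $T$: since $\bigcup\mathcal{C}(R)$ is dense in $R$, whenever $U$ meets $R$ it meets some child, and by assumption exactly one. The key point is that this forces $U\cap R$ to be constant along the trace. Indeed, if $U$ meets only the child $R'$ among $\mathcal{C}(R)$, then the open set $U\cap R$ is disjoint from every other child and hence, by density of $\bigcup\mathcal{C}(R)$, contained in $\overline{R'}$; being open, it lies in $\inte\overline{R'}=R'$ because $R'$ is regular open, so $U\cap R=U\cap R'$. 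Iterating, $U\cap R$ keeps the same value $W\ne\emptyset$ at every node of the trace; at a limit $\lambda$ the open set $W$ sits inside every $R_\xi$, hence inside $\inte\bigcap_{\xi<\lambda}R_\xi\subseteq R_b$, so the limit node is nonempty and $U$ survives it. Thus the trace can never terminate, i.e.\ it is a branch of length $\kappa$, contradicting the previous paragraph. Therefore no such $U$ exists, $\mathfrak{F}$ is shattering, and $\sh(X)\le|\mathfrak{F}|\le\kappa$.

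The moreover clause then follows by analyzing $\htt(T)$. If $\htt(T)<\kappa$ then, $\kappa$ being regular and all levels having size $<\kappa$, we get $|T|<\kappa$ and hence $\sh(X)<\kappa$. If instead $\htt(T)=\kappa$, then $T$ is a tree of height $\kappa$ all of whose chains and antichains have size $<\kappa$, that is, a $\widehat{c}(X)$-Suslin tree (equivalently, via the standard correspondence, a $\widehat{c}(X)$-Suslin line). So whenever $\sh(X)=\widehat{c}(X)$ the second alternative must occur and such a Suslin tree exists. I expect the main obstacle, and the place where regularity of the open sets is essential, to be the constancy-and-survival step in the third paragraph: it is exactly what prevents an open $U$ from escaping through a limit level, and so turns an unshattered set into an impossible $\kappa$-branch.
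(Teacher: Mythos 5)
Your proof is correct and follows essentially the same route as the paper's: both construct a ``shattering tree'' of regular open sets by transfinite recursion (disjoint families with dense union at successor steps, regularized intersections of branches at limit steps), bound the chains and antichains by $\widehat{c}(X)$, and observe that the height-$\widehat{c}(X)$ case produces a $\widehat{c}(X)$-Suslin tree. The only differences are bookkeeping: you branch into maximal disjoint families rather than pairs, take $\{\mathcal{C}(R): R\in T\}$ rather than the levels as the shattering family (hence count $|T|$ rather than the height, which is why you need regularity of $\widehat{c}(X)$ earlier), and run the survival-of-the-trace argument by contradiction where the paper proves the corresponding statement $(I_\alpha)$ by positive induction.
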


\begin{proof}
We are going to build a tree $\mc T \subs \RO^+(X)$, with inclusion $\subs$ as the tree ordering,
of height $\eta \le \widehat{c}(X)$ whose levels
$\{\mc T_\alpha : \alpha < \eta\}$ will form a shattering family for $X$.
It is natural call this a shattering tree for $X$.

To begin with, for each regular open set $R \in \RO^+(X)$ we fix two disjoint non-empty regular open subsets of $R$, say $P(R)$ and $Q(R)$,
in such a way that their union $P(R) \cup Q(R)$ is dense in $R$. This is possible because
$X$ is crowded $T_2$. Note that then for every open subset $U$ of $R$ we have either $U \subs P(R)$, or $U \subs Q(R)$,
or $U$ intersects both $P(R)$ and $Q(R)$.

Now, we define the levels $\mc T_\alpha$ of $\mc T$ by transfinite recursion on $\alpha$. For a  start, we put $\mc T_0 = \{X\}$.
In the successor step, if $\mc T_\alpha$ is defined then we put $$\mc T_{\alpha + 1} = \{P(R) \,, Q(R) : R \in \mc T_\alpha \}.$$
This means that each member of $\mc T_\alpha$ has two immediate successors, in particular,
$\mc T_\alpha \ne \emptyset$ implies $\mc T_{\alpha+1} \ne \emptyset$.

If $\alpha$ is limit and $\mc T \upharpoonright \alpha = \cup_{\beta < \alpha} \mc T_\beta$ has been defined
then we first consider the set of all cofinal branches $B_\alpha$ of the partial tree $\mc T \upharpoonright \alpha$.
Formally, $B_\alpha$ consists of all functions $b : \alpha \to \mc T \upharpoonright \alpha$ such that $b(\beta) \in \mc T_\beta$ for any $\beta < \alpha$, moreover
$b(\beta) \supset b(\gamma)$ whenever $\beta < \gamma < \alpha$.
For every $b \in B_\alpha$ we let $R_b =  int(\bigcap_{\beta < \alpha}\overline{b(\beta)})$, then $R_b \ne \emptyset$ implies $R_b \in \RO^+(X)$.
Then we put $$\mc T_\alpha = \{R_b : b \in B_\alpha\} \setm \{\emptyset\}.$$

This recursive construction stops at the first, necessarily limit, ordinal $\eta$ when  $R_b = \emptyset$ for all $b \in B_\eta$.
It is clear from our construction that if $S$ and $T$ are incomparable elements of $\mc T = \cup_{\alpha < \eta} \{\mc T_\alpha : \alpha < \eta\}$
then $S \cap T = \emptyset$, hence every antichain in $\mc T$ has size $< \widehat{c}(X)$. Moreover, as $\mc T$ branches at
all of its nodes, every chain in $\mc T$ has size $< \widehat{c}(X)$ as well. It follows then that $\eta \le \widehat{c}(X)$,
moreover if $\eta = \widehat{c}(X)$ then $\mc T$ is a $\widehat{c}(X)$-Suslin tree.

Now, it remains to show that the family of levels
$\{\mc T_\alpha : \alpha < \eta\}$ of $\mc T$ forms a shattering family for $X$. To see this,
we shall prove by transfinite induction on $\alpha \le \eta$ the following statement $(I_\alpha)$:
For every $U \in \tau^+(X)$ either there is $R \in \mc T_\alpha$ with $U \subs R$, or there is a
$\beta \le \alpha$ such that $U$ intersects at least two members of $\mc T_\beta$.

Indeed, $(I_0)$ holds trivially. If $(I_\alpha)$ holds then again it is trivial that so does $(I_{\alpha+1})$.
So, assume that  $\alpha \le \eta$ is limit and $(I_\beta)$ holds for all $\beta < \alpha$ and fix  $U \in \tau^+(X)$.
If there is $\beta < \alpha$ such that $U$ intersects at least two members of $\mc T_\beta$ then we are done.
Otherwise, by the inductive assumption, for each $\beta < \alpha$ there is some $b(\beta) \in \mc T_\beta$
with $U \subs b(\beta).$ But then $b \in B_\alpha$ and we clearly have $U \subs \bigcap_{\beta < \alpha}\overline{b(\beta)}$,
hence $U \subs R_b \in \mc T_\alpha$. This shows that $(I_\alpha)$ is valid. Since $\mc T_\eta = \emptyset$, then $(I_\eta)$
simply means that $\{\mc T_\alpha : \alpha < \eta\}$ is indeed a shattering family for $X$.
Consequently we have $\sh(X) \le \eta \le \widehat{c}(X)$, moreover $\sh(X) = \widehat{c}(X)$ implies $\eta = \widehat{c}(X)$,
hence $\mc T$ is a $\widehat{c}(X)$-Suslin tree.
\end{proof}

Since we always have $\cel(X) \le \widehat{c}(X)$, from Theorem \ref{tm:sh_le} and the remarks made before it we immediately obtain the following results.

\begin{corollary}\label{co:os2os3}
For every space $X$ we have $\ostwo(X) \le \widehat{c}(X)$ and if $X$ is also $\pi$-regular then even
$\osthree(X) \le \widehat{c}(X)$. Moreover, if $\sh(X) < \widehat{c}(X)$ then $\ostwo(X) < \widehat{c}(X)$
and if $X$ is $\pi$-regular then $\osthree(X) < \widehat{c}(X)$.
\end{corollary}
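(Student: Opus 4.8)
The plan is to combine Theorem \ref{tm:sh_le} with the two inequalities relating $\ostwo$ (resp. $\osthree$) to $\sh$ and $\cel$ recorded just before that theorem, being careful that for the \emph{strict} inequalities the crude cardinal-arithmetic bound $\sh(X)\cdot\cel(X)$ is not enough and must be replaced by a direct counting argument that exploits the regularity of $\widehat{c}(X)$.

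For the first assertion I would simply recall that the remarks preceding Theorem \ref{tm:sh_le} yield $\ostwo(X) \le \sh(X) \cdot \cel(X)$ for every space, and $\osthree(X) \le \sh(X) \cdot \cel(X)$ when $X$ is $\pi$-regular. Since $\sh(X) \le \widehat{c}(X)$ by Theorem \ref{tm:sh_le}, since $\cel(X) \le \widehat{c}(X)$ always holds, and since $\widehat{c}(X)$ is an infinite cardinal, the right-hand side is at most $\widehat{c}(X) \cdot \widehat{c}(X) = \widehat{c}(X)$. This gives $\ostwo(X) \le \widehat{c}(X)$ and, in the $\pi$-regular case, $\osthree(X) \le \widehat{c}(X)$.

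The main obstacle is the ``Moreover'' part, because the product bound $\sh(X)\cdot\cel(X)$ can equal $\widehat{c}(X)$ even when $\sh(X) < \widehat{c}(X)$, namely whenever $\cel(X) = \widehat{c}(X)$; this happens exactly when $\widehat{c}(X)$ is a limit cardinal (which, being always regular, is then weakly inaccessible). To recover the strict inequality I would bypass $\cel(X)$ entirely and return to the explicit splitting family built from a shattering family. So assume $\sh(X) < \widehat{c}(X)$ and fix a shattering family $\mathfrak{F}$ with $|\mathfrak{F}| = \sh(X)$. By the first remark before Theorem \ref{tm:sh_le}, $\cup\mathfrak{F}$ is a $T_2$-splitting family for $\tau^+(X)$, so $\ostwo(X) \le |\cup\mathfrak{F}|$. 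The decisive observation is that each $\mc S \in \mathfrak{F}$ is a \emph{disjoint} subfamily of $\RO^+(X)$, so by the very definition of $\widehat{c}(X)$ we have $|\mc S| < \widehat{c}(X)$. Thus $\cup\mathfrak{F}$ is a union of $\sh(X) < \widehat{c}(X)$ sets, each of size $< \widehat{c}(X)$; as $\widehat{c}(X)$ is regular, this forces $|\cup\mathfrak{F}| < \widehat{c}(X)$, whence $\ostwo(X) < \widehat{c}(X)$. Note that this argument works uniformly, including in the weakly inaccessible case where the naive product bound breaks down.

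For the $\pi$-regular strict bound I would run the identical counting on the $T_3$-refinement $\widetilde{\mathfrak{F}} = \{\widetilde{\mc S} : \mc S \in \mathfrak{F}\}$ described before Theorem \ref{tm:sh_le}, where $\widetilde{\mc S} = \bigcup\{\mc S_R : R \in \mc S\}$ and each $\mc S_R$ is a disjoint family with union dense in $R$. Each $\mc S_R$ is disjoint, so $|\mc S_R| < \widehat{c}(X)$; since $\widetilde{\mc S}$ is a union of the $|\mc S| < \widehat{c}(X)$ many families $\mc S_R$, regularity gives $|\widetilde{\mc S}| < \widehat{c}(X)$; and then $\cup\widetilde{\mathfrak{F}}$ is a union of $\sh(X) < \widehat{c}(X)$ such sets, so by regularity again $|\cup\widetilde{\mathfrak{F}}| < \widehat{c}(X)$, giving $\osthree(X) < \widehat{c}(X)$. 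The one point requiring vigilance throughout is precisely this twofold use of the regularity of $\widehat{c}(X)$ together with the fact that every disjoint open family has size strictly below $\widehat{c}(X)$ — this is exactly what makes the strict inequalities hold in full generality.
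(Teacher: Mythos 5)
Your proposal is correct and takes essentially the same route as the paper: the non-strict bounds follow from $\ostwo(X)\le\sh(X)\cdot\cel(X)$ (resp.\ $\osthree(X)\le\sh(X)\cdot\cel(X)$ for $\pi$-regular $X$) together with Theorem \ref{tm:sh_le} and $\cel(X)\le\widehat{c}(X)$, while the strict bounds come from the regularity of $\widehat{c}(X)$. Your explicit counting argument --- each member of a shattering family is a disjoint family in $\RO^+(X)$, hence of size $<\widehat{c}(X)$, so the union of $\sh(X)<\widehat{c}(X)$ many such families (and likewise of the $T_3$-refinement $\widetilde{\mathfrak{F}}$) has size $<\widehat{c}(X)$ --- is exactly the detail that the paper compresses into the remark that the second part is ``immediate from the fact that $\widehat{c}(X)$ is always a regular cardinal,'' and you are right that this, rather than the product bound, is what handles the case $\cel(X)=\widehat{c}(X)$.
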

The second part is immediate from the fact that $\widehat{c}(X)$ is always a regular cardinal.

We are now ready to present our main result about Tychonov NWC images of Tychonov spaces.

\begin{theorem}\label{tm:Tych}
Any Tychonov space $X$ has a Tychonov NWC image $Y$ of weight $\we(Y) \le \widehat{c}(X)$.
If $\sh(X) < \widehat{c}(X)$, in particular if there are no $\widehat{c}(X)$-Suslin trees,
then in this statement $\le$ can be replaced by $<$.
\end{theorem}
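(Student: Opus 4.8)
The plan is to bridge the gap between Tychonov and normal by passing to a compactification, building the map there, and then restricting it to $X$. Let $Z = \beta X$ (any Hausdorff compactification would do), so that $X$ is dense in $Z$ and $Z$ is compact Hausdorff, hence normal and $\pi$-regular. First I would record two facts about this dense embedding. The space $Z$ is again crowded, since an isolated point of $Z$ would, by density of $X$, have to lie in $X$ and be isolated there. And $\widehat c(Z) = \widehat c(X)$: a disjoint open family of $Z$ traces to a disjoint open family of $X$ by density, while conversely, if $\{V_i\}$ is a disjoint open family of $X$ and $V_i = W_i \cap X$ with each $W_i$ open in $Z$, then each $W_i \cap W_j$ (for $i \ne j$) is an open subset of $Z$ missing the dense set $X$, hence empty, so $\{W_i\}$ is a disjoint open family of $Z$ of the same size.

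With these in hand the construction is immediate. Since $Z$ is $\pi$-regular, Corollary \ref{co:os2os3} yields an open family $\mc A$ in $Z$ with $|\mc A| \le \widehat c(Z) = \widehat c(X)$ that $T_3$-splits $\tau^+(Z)$; I may take $\mc A$ infinite at no cost, as $\widehat c(X) > \omega$. Normality of $Z$ then lets me feed $\mc A$ into Theorem \ref{tm:nwc-from-splitting-normal}, producing a continuous $f : Z \to [0,1]^{|\mc A|}$ that is not constant on any member of $\tau^+(Z)$. I set $Y = f[X]$ and claim that the restriction $g = f \upharpoonright X : X \to Y$ is as required. The verification is where density does the work: given $B \in \tau^+(X)$, write $B = W \cap X$ with $W \in \tau^+(Z)$, pick $A_0, A_1 \in \mc A$ with $\overline{A_0}^{Z} \cap \overline{A_1}^{Z} = \emptyset$ and $A_i \cap W \ne \emptyset$, and use density to choose $x_i \in A_i \cap W \cap X \subseteq B$. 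Exactly as in the proof of Theorem \ref{tm:nwc-from-splitting-normal}, the $\langle A_0, A_1\rangle$-coordinates of $f(x_0)$ and $f(x_1)$ are $0$ and $1$, so $g(x_0) \ne g(x_1)$ and $g$ is NWC. Finally $Y$ is a subspace of a Tychonov cube, hence Tychonov, and crowded by Proposition \ref{tm:nwc-equivalence}, while $\we(Y) \le |\mc A| \le \widehat c(X)$.

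For the strict inequality I only need $|\mc A| < \widehat c(X)$, which by Corollary \ref{co:os2os3} (applied to the $\pi$-regular $Z$) follows from $\sh(Z) < \widehat c(Z)$. Under the hypothesis $\sh(X) < \widehat c(X)$, I would show $\sh(Z) \le \sh(X)$: a shattering family $\mathfrak{F}$ for $X$ transfers to one for $Z$ by replacing each regular open $R$ occurring in some $\mc S \in \mathfrak{F}$ with $\operatorname{int}_Z \overline{R}^{Z}$, the regular open set of $Z$ whose trace on $X$ is $R$. Disjointness is preserved, each $R$ sits inside its image, and every $U \in \tau^+(Z)$ meets two members of the transferred level because $U \cap X$ already meets two members of $\mc S$. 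Then $\sh(Z) \le \sh(X) < \widehat c(X) = \widehat c(Z)$ gives $\osthree(Z) < \widehat c(Z)$, hence $|\mc A| < \widehat c(X)$ and $\we(Y) < \widehat c(X)$. For the ``in particular'' clause I can bypass the transfer altogether: if there is no $\widehat c(X) = \widehat c(Z)$-Suslin tree, then Theorem \ref{tm:sh_le} applied to $Z$ forces $\sh(Z) < \widehat c(Z)$ outright, and the same conclusion follows.

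The main obstacle is exactly the normality gap: Theorem \ref{tm:nwc-from-splitting-normal} converts a $T_3$-splitting family into a map only in a normal space, whereas $X$ is merely Tychonov, so there is in general no way to separate the disjoint closures $\overline{A_0}, \overline{A_1}$ by functions inside $X$ itself. Everything hinges on the dense embedding into the compact (hence normal) $Z$ together with the observation that NWC-ness of the map survives restriction to the dense subspace $X$ — which in turn rests on the fact that every nonempty open set of $X$ is the trace of a nonempty open set of $Z$ and that the splitting witnesses live in open sets that necessarily meet $X$. The bookkeeping that $\widehat c$ and $\sh$ are inherited (non-increasingly) by $Z$ from its dense subspace $X$ is routine but must be stated explicitly, since the whole argument is carried out upstairs in $Z$.
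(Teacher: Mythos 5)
Your proof is correct and follows essentially the same route as the paper's own second proof of this theorem: pass to a compactification $bX$, use Corollary \ref{co:os2os3} together with the normal-space machinery (Theorem \ref{tm:nwc-from-splitting-normal}) to get an NWC map of small weight on $bX$, restrict to the dense subspace $X$, and handle the strict inequality via $\sh(bX)\le\sh(X)$ and Corollary \ref{co:os2os3} again. The only difference is that you explicitly verify facts the paper asserts as clear (crowdedness of $bX$, $\widehat c(bX)=\widehat c(X)$, and the transfer of shattering families), which is a welcome but not essentially new addition.
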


\begin{proof}
We shall actually give two proofs, one using $\ostwo$ and the other using $\osthree$.

{\bf First proof}. We may assume that $X$ is a subspace of some Tychonov cube $[0,1]^\kappa$ and will
show that there is $J \subs \kappa$ with $|J| \le \widehat{c}(X)$ such that the projection map
$\pi_J : [0,1]^\kappa \to [0,1]^J$ restricted to $X$ is NWC.

To see this, let us fix an open base $\mc B$ of $[0,1]$ and let $\mc E$ be the base of $[0,1]^\kappa$
consisting of all the elementary open sets of the form $[\varepsilon]$, where $\varepsilon$ is
a function with domain a finite subset of $\kappa$ and range included in $\mc B$, moreover
$$[\varepsilon] = \{x \in [0,1]^\kappa : \,\forall\,\alpha \in \dom(\varepsilon)\,\,\big(x(\alpha) \in \varepsilon(\alpha)\big\}.$$

Then $\mc V = \{[\varepsilon] \cap X : \varepsilon \in \mc E\}$ is a base for $X$, hence we may apply Theorem
\ref{tm:t2cutsfrombase} to obtain $\mc U \subs \mc V$ of size $\le \ostwo(X) \cdot \cel(X) \le \widehat{c}(X)$
which $\,T_2$-splits $\tau^+(X)$. Let us fix for each $U \in \mc U$ an $\varepsilon_U \in \mc E$
such that $U = [\varepsilon_U] \cap X$ and set $J = \bigcup \{\dom(\varepsilon_U) : U \in \mc U\}$,
then $|J| \le |\mc U| \le \widehat{c}(X)$.

To see that $\pi_J$ is NWC on $X$, pick any $G \in \tau^+(X)$ and two disjoint members  $U_0$ and $U_1$
of $\mc U$ both intersecting $G$. But then the non-empty projections $\pi_J\big[[\varepsilon_{U_0}] \cap G \big]$ and $\pi_J\big[[\varepsilon_{U_1}1] \cap G \big]$
are also disjoint because $\dom(\varepsilon_{U_0}) \cup \dom(\varepsilon_{U_1}) \subs J$.
Consequently, $|\pi_J[G]| > 1$.

If $\sh(X) < \widehat{c}(X)$ then Corollary \ref{co:os2os3} implies $\ostwo(X) < \widehat{c}(X)$,
hence, by  Theorem \ref{tm:t2cutsfrombase} again, the $\,T_2$-splitting family $\mc U$ can be found with $|\mc U| < \widehat{c}(X)$,
and then we have $|J| < \widehat{c}(X)$ as well.

{\bf Second proof}. Let $bX$ be any compactification of $X$, then $bX$ is normal and hence \ref{tm:os2nwcnormal} implies that there is an NWC surjection $f : bX \to Y$,
where $Y$ is (necessarily) compact with $$\we(Y) = \osthree(bX) \le \widehat{c}(bX) = \widehat{c}(X),$$
using Corollary \ref{co:os2os3}.
But, as $X$ is dense in $bX$, then $f \upharpoonright X$ is also NWC. Indeed, this is because for every $U \in \tau^+(bX)$ we have
$x, y \in U$ with $f(x) \ne f(y)$ and so disjoint open sets $V,\, W$ in $Y$ containing $f(x)$, resp. $f(y)$. Then
$x \in f^{-1}(V) \cap U$ and $y \in f^{-1}(W) \cap U$ imply that we can pick
points $x' \in X \cap f^{-1}(V) \cap U$ and $y' \in X \cap f^{-1}(W) \cap U$, concluding that $f(x') \ne f(y')$ for the points $x',y' \in X \cap U$.
Now, note that $\we(f[X]) \le \we(Y)$.

Since $X$ is dense in $bX$, we clearly have $\sh(X) = \sh(bX)$. Hence $\sh(X) < \widehat{c}(X)$ implies that, by Corollary \ref{co:os2os3} again,
$$\we(f[X]) \le \we(Y) = \osthree(bX) < \widehat{c}(bX) = \widehat{c}(X).$$
\end{proof}

The final result of this section shows that the proviso "there are no $\widehat{c}(X)$-Suslin trees"
in the previous results is essential.

\begin{theorem}\label{tm:sus}
If $L$ is a LOTS then $\ostwo(L) < \widehat{c}(L)$ implies $\den(L) < \widehat{c}(L)$.
Consequently, if $L$ is a $\kappa$-Suslin line for an uncountable regular cardinal $\kappa$
then any NWC image $Y$ of $L$ satisfies $\we(Y) \ge \kappa = \widehat{c}(L)$.
\end{theorem}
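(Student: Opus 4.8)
The plan is to reduce everything to the first implication, since the ``Consequently'' part is then immediate. Indeed, a $\kappa$-Suslin line $L$ has $\widehat{c}(L)=\kappa$ and $\den(L)\ge\kappa$ by definition, so the relation $\den(L)<\widehat{c}(L)$ fails; reading the first implication contrapositively, $\ostwo(L)<\widehat{c}(L)$ must then fail as well, i.e. $\ostwo(L)\ge\kappa=\widehat{c}(L)$, and Theorem~\ref{lm:os2mwcU} gives $\we(Y)\ge\ostwo(L)\ge\kappa$ for every NWC image $Y$ of $L$. So I would concentrate on showing that $\ostwo(L)<\widehat{c}(L)$ forces $\den(L)<\widehat{c}(L)$.

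Write $\kappa=\widehat{c}(L)$, which is regular and uncountable, and fix a family $\mc U\subs\tau^+(L)$ with $|\mc U|=\ostwo(L)<\kappa$ that $T_2$-splits $\tau^+(L)$. First I would decompose each $A\in\mc U$ into its maximal convex pieces (the components of the open set $A$). These components form a disjoint family of non-empty open sets, so there are fewer than $\kappa$ of them; since $|\mc U|<\kappa$ and $\kappa$ is regular, the total number of components of all members of $\mc U$ is still $<\kappa$. Passing to the Dedekind completion $\overline L$ of $L$, in which every component acquires a well-defined left and right endpoint, I would let $E^\ast\subs\overline L$ be the set of all these endpoints; then $|E^\ast|<\kappa$.

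The heart of the argument is to show that $E^\ast$ is dense in $\overline L$, using the splitting property. Given a non-empty open interval $I=(a',b')$ with $a',b'\in L$, the family $\mc U$ $\,T_2$-splits $I$, so there are disjoint $A_0,A_1\in\mc U$ and points $x_0\in A_0\cap I$, $x_1\in A_1\cap I$, say $x_0<x_1$. Let $J_0$ be the component of $A_0$ containing $x_0$ and $q_0=\sup_{\overline L}J_0$. Since $J_0$ is open and convex, contains $x_0$, and misses $x_1\notin A_0\supseteq J_0$, convexity forces $x_0<q_0\le x_1$, whence $q_0\in(a',b')$; as $q_0\in E^\ast$, this exhibits a point of $E^\ast$ in $I$. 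Because the intervals with endpoints in $L$ form a $\pi$-base of $\overline L$, this proves $E^\ast$ dense, so $\den(\overline L)\le|E^\ast|<\kappa$. Finally I would transfer back to $L$: for each pair $e<e'$ in $E^\ast$ whose open interval meets $L$ I pick one witness point of $L$ inside it; the resulting set $D\subs L$ has size $<\kappa$, and a short check (using that $L$ is crowded, so every interval contains three points, together with the density of $E^\ast$) shows $D$ is dense in $L$. Thus $\den(L)<\kappa$, as required.

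The step I expect to be the main obstacle is the appearance of endpoints lying in \emph{gaps} of $L$, that is, suprema of components not realized by any point of $L$; this is exactly why I pass to the Dedekind completion and only afterwards descend to a genuine dense subset of $L$. The accompanying cardinal bookkeeping --- that $<\kappa$-many cellular families, each of size $<\kappa$, contribute altogether $<\kappa$ endpoints --- is where the regularity of $\widehat{c}(L)$ is used.
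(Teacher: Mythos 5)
Your proof is correct, and it shares the paper's skeleton: the same contrapositive reduction of the ``Consequently'' part via Theorem \ref{lm:os2mwcU}, and the same core idea that the endpoints of the convex pieces of a small splitting family must be dense. But your technical route is genuinely different in one respect. The paper never touches the Dedekind completion: it applies Theorem \ref{tm:t2cutsfrombase} with $\mc V$ taken to be the base $\mc I$ of all non-empty open intervals of $L$, obtaining a $T_2$-splitting family $\mc J \subs \mc I$ of size $< \widehat{c}(L)$ whose members are honest intervals of $L$, so all endpoints already lie in $L$; density of the endpoint set is then a two-line contradiction (an interval $I$ missing all endpoints would, by convexity, be contained in every $J \in \mc J$ that meets it, so no two \emph{disjoint} members of $\mc J$ could both meet $I$, contradicting splitting). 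Your decomposition into maximal convex components plays exactly the role of Theorem \ref{tm:t2cutsfrombase}, with the same regularity-of-$\widehat{c}(L)$ bookkeeping, but components may have gap endpoints, and that is precisely what forces your detour through $\overline L$ and the descent back to $L$. What your version buys is self-containedness and a direct (rather than by-contradiction) density argument: $\sup J_0$ is trapped between the two split points. What the paper's version buys is brevity and never leaving $L$. Three harmless slips in your write-up, none affecting correctness: the strict inequality $x_0 < q_0$ can fail (in a crowded LOTS an open convex set can have a maximum), but $x_0 \le q_0 \le x_1$ already places $q_0$ in $(a',b')$; suprema of upward-unbounded components need not exist in $\overline L$, but you only use $\sup J_0$ when $J_0$ is bounded above by $x_1$; and the final density check for $D$ needs five points of $L$ in the given interval (a middle point plus two buffers on each side, so that both flanking intervals of $\overline L$ are visibly non-empty), not three --- crowdedness of course supplies them.
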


\begin{proof}
The family $\mc I$ of all non-empty open intervals of $L$ forms a base for the topology of $L$, hence
it follows from Theorem \ref{tm:t2cutsfrombase} and $\ostwo(L) < \widehat{c}(L)$ that there is
$\mc J \subs \mc I$ with $|\mc J| < \widehat{c}(L)$ such that $\mc J$ $\,T_2$-separates $\tau^+(L)$.
Let $A$ be the set of all end points of the members of $\mc J$, then $|A| < \widehat{c}(L)$ and we
claim that $A$ is dense in $L$. Indeed, if we had $A \cap I = \emptyset$ for some $I \in \mc I$ then
$I \cap J \ne \emptyset$ for any $J \in \mc J$ would imply $I \subs J$, contradicting
our assumption that $\mc J$ $\,T_2$-separates $\tau^+(L)$.

Now, recall that $L$ is a $\kappa$-Suslin line simply means that
$\kappa = \widehat{c}(L) \le \den(L)$. Hence, in this case  we have
$\ostwo(L) = \widehat{c}(L)$ and so,  by Theorem \ref{lm:os2mwcU}, if $Y$ is any NWC image of $L$
then  $\we(Y) \ge \widehat{c}(L)$.
\end{proof}

\section{Pseudo-open images}

The first result of this section, similarly to Theorem \ref{tm:Tych}, yields
an upper bound for the minimum weight of a Tychonov PO image of a Tychonov space $X$
in terms of $\widehat{c}(X)$. However, as being PO is more restrictive than being NWC,
it is not surprising that the upper bound for PO images is larger than the upper bound for NWC images. As we shall see
later, at least consistently, this new upper bound is sharp.

\begin{theorem}\label{tm:po}
Any Tychonov space $X$ has a Tychonov PO image
$Y$ of weight $\we(Y) \le 2^{<\widehat{c}(X)}$.
\end{theorem}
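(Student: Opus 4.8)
My plan is to exhibit the image as a quotient of a compactification of $X$ corresponding to a small \emph{complete} subalgebra of $\RO(X)$, exploiting that any map whose induced action on regular open algebras is a complete subalgebra is automatically PO. First I would reduce to the compact case. Fix any compactification $bX$; as $X$ is dense in $bX$ we have $\RO(bX)\cong\RO(X)$, so $\widehat{c}(bX)=\widehat{c}(X)=:\kappa$ and $bX$ is crowded. If $g:bX\to Y$ is a PO surjection, then $g\restriction X$ is PO onto the dense crowded subspace $g[X]$: writing $V=U\cap X$ with $U\in\tau^+(bX)$, density gives $\overline{g[V]}=\overline{g[U]}$, so $\inte\overline{g[V]}\neq\emptyset$ by Proposition \ref{tm:po-equivalence}(c); and $\we(g[X])\le\we(Y)$. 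Hence it suffices to find a crowded PO image $Y$ of the compact (hence normal) space $bX$ with $\we(Y)\le 2^{<\kappa}$, and I assume $X$ compact from now on.

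Next I build the target algebra. By Corollary \ref{co:os2os3}, $\ostwo(X)\le\kappa$, so fix a $T_2$-splitting family $\mc A\subs\RO^+(X)$ for $\tau^+(X)$ with $|\mc A|\le\kappa$, and let $\mathbb{B}$ be the complete subalgebra of $\RO(X)$ it generates. Then $\mathbb{B}$ is atomless: a nonzero $b\in\mathbb{B}$ contains a nonempty open $G$, and $\mc A$ yields disjoint $A_0,A_1$ meeting $G$, so $b\wedge A_0$ and $b\wedge A_1$ are disjoint nonzero members of $\mathbb{B}$ below $b$. Also $|\mathbb{B}|\le 2^{<\kappa}$: since $\widehat{c}(X)=\kappa$, the algebra $\RO(X)$ is $\kappa$-cc, so every supremum is that of a $<\kappa$-sized subfamily, and closing a $\kappa$-sized set under finite operations and such suprema gives the standard bound $|\mathbb{B}|\le\kappa^{<\kappa}=2^{<\kappa}$, the last equality holding because $\kappa=\widehat{c}(X)$ is regular.

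The point that makes the size bound pay off is that any continuous surjection $f:X\to Y$ whose induced map identifies $\RO(Y)$ with the complete subalgebra $\mathbb{B}\le\RO(X)$ is automatically PO. I would verify characterization (c) of Proposition \ref{tm:po-equivalence}: for $U\in\tau^+(X)$ set $a=\inte\overline U\in\RO^+(X)$ and let $u(a)=\bigwedge\{b\in\mathbb{B}:a\le b\}\neq 0$ be its upper projection into $\mathbb{B}$. If some nonzero $b\le u(a)$ in $\mathbb{B}$ had $b\wedge a=0$, then $a\le -b\in\mathbb{B}$ would force $u(a)\le -b$, hence $b\le -b$ and $b=0$, a contradiction; so every nonzero member of $\mathbb{B}$ below $u(a)$ meets $a$, which says precisely that $f[U]$ is dense in the nonempty $Y$-open set coded by $u(a)$, i.e.\ $\inte_Y\overline{f[U]}\neq\emptyset$. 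As $\mathbb{B}$ is atomless, $Y$ is crowded; and since a compact Hausdorff $Y$ is regular, its regular open sets form a base, so $\we(Y)\le|\RO(Y)|=|\mathbb{B}|\le 2^{<\kappa}$.

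The main obstacle is the realization step: producing a genuine continuous surjection of the compact space $X$ whose associated complete subalgebra is exactly $\mathbb{B}$. One cannot use the Stone space $St(\mathbb{B})$, since $X$ does not map continuously onto it — a point on the boundary of some $b\in\mathbb{B}$ determines no $\mathbb{B}$-ultrafilter. Instead I would take $Y$ to be the quotient of $X$ by the decomposition into $\mathbb{B}$-indistinguishability classes (identifying $x,x'$ that lie in the closures of exactly the same members of $\mathbb{B}$), and prove that this decomposition is upper semicontinuous, that $Y$ is compact Hausdorff with $\RO(Y)\cong\mathbb{B}$, and that the quotient map is the $f$ used above; equivalently, one invokes the known correspondence between complete subalgebras of $\RO(X)$ and skeletal (i.e.\ PO) images of a compact space. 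Verifying the Hausdorffness of $Y$, and that its regular open algebra is neither larger nor smaller than $\mathbb{B}$, is the delicate part, and the $T_2$-splitting property of $\mc A$ is exactly what should force distinct classes to be separated.
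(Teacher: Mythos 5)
Your steps (1)--(3) are sound: the reduction to a compactification, the construction of an atomless complete subalgebra $\mathbb{B}\le\RO(X)$ with $|\mathbb{B}|\le\kappa^{<\kappa}=2^{<\kappa}$ (using $\kappa$-cc to cut all suprema down to $<\kappa$-sized ones, and regularity of $\kappa$), and the observation that a continuous surjection whose induced embedding identifies $\RO(Y)$ with a complete subalgebra of $\RO(X)$ must be PO. But the argument has a genuine gap exactly where you place it, in the realization step, and the gap is not a routine verification that you may defer: it is the whole difficulty of the theorem. The ``known correspondence'' you invoke is classical only in the direction you do not need (a PO/skeletal map between compact spaces induces a complete embedding of regular open algebras); the converse --- that every complete subalgebra of $\RO(X)$ of a compact $X$ is induced by some PO quotient of $X$ --- is simply false. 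For instance, take $X=[0,1]$, let $K$ be the Cantor set and $\mathbb{B}_K=\{R\in\RO([0,1]):\partial R\subs K\}$. Using local connectedness one checks that $\mathbb{B}_K$ is a complete subalgebra of $\RO([0,1])$, and its atoms are the complementary intervals of $K$. A quotient realizing it would have $\RO(Y)\cong\mathbb{B}_K$, hence $Y$ would be a nondegenerate space with isolated points; but every Hausdorff continuous image of $[0,1]$ is connected, so no such quotient exists. Atomlessness of your $\mathbb{B}$ removes this particular obstruction, but no theorem in the literature guarantees realizability of atomless complete subalgebras either, and nothing in your proposal shows that the $T_2$-splitting property of $\mc A$ forces it. Moreover, your concrete proposal --- the quotient by $\mathbb{B}$-indistinguishability --- breaks down already at the $T_1$ level: an equivalence class is an intersection of sets $\overline{b}$ with sets $X\setm\overline{b}$, the latter open, so classes need not be closed. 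Concretely, in $[0,1]$ let $\mathbb{B}$ be the complete subalgebra generated by the rational subintervals of $(0,1/2)$ together with $(1/2,1)$; then no element of $\mathbb{B}$ has a boundary point inside $(1/2,1)$, so the whole half-open interval $(1/2,1]$ is a single indistinguishability class, it is not closed, and its image point cannot be separated from (indeed, is not even distinct as a closed point from) the image of $1/2$.

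It is worth comparing with what the paper actually does, because the paper's proof is designed precisely to avoid this realization problem. There the space is embedded in a cube $[0,1]^\kappa$, one takes an elementary submodel $M\prec H(\lambda)$ of size $2^{<\widehat{c}(X)}$ that is closed under $<\widehat{c}(X)$-sized subsets, and the image is simply $\pi_J[X]$ for $J=M\cap\kappa$ --- so the continuous surjection exists for free, and all the work goes into verifying the PO property of this projection, which is done by reflecting, via elementarity, the statement that a suitable $<\widehat{c}(X)$-sized family witnessing density of a dense open $G\subs\pi_J[X]$ already lies in $M$. In other words, the paper trades your algebraic realization problem (hard, arguably a research problem in itself) for a combinatorial reflection argument. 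If you want to pursue your route, you must either prove a realization theorem for the specific subalgebras you construct or redesign the construction so that the surjection is given in advance (as the projection is in the paper); as it stands, the proof is incomplete.
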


\begin{proof}
We may assume that $X$ is a subspace of some Tychonov cube $[0,1]^\kappa$ and will
show that there is $J \subs \kappa$ with $|J| \le 2^{< \widehat{c}(X)}$ such that the projection map
$\pi_J : [0,1]^\kappa \to [0,1]^J$ restricted to $X$ is PO.
To this end, we first take an elementary submodel $M$ of $H(\lambda)$ for a large enough regular cardinal $\lambda$ such that 
$\,X,\, \kappa \in M$, $\,|M| = 2^{<\widehat c(X)}$, and $\br M;{\mu};\subs M$ for all cardinals ${\mu}<\widehat c(X)$.
This is possible because $\widehat c(X)$ is regular. We then put  $J=M\cap {\kappa}$
and claim that $\pi_J\restriction X : X \to Y = \pi_J[X]$ is PO.
By Theorem \ref{tm:po-equivalence}, this amounts to showing that if $G$ is dense open in $Y$
then $X \cap \pi_J^{-1}[G] = (\pi_J\restriction X)^{-1}[G]$ is dense in $X$.

To see this, let us fix an open base $\mc B \in M$ of $[0,1]$ and let $\mc E$ be the
the family of functions with domain a finite subset of $\kappa$ and range included in $\mc B$.
Clearly, $\mc E \in M$ and $M \vDash \varepsilon \in \mc E$ iff $\varepsilon \in M \cap \mc E$
iff $\varepsilon \in \mc E$ and $\dom(\varepsilon) \subs J$.
As in the first proof of \ref{tm:Tych}, we denote by $[\varepsilon]$ the elementary open set in $[0,1]^\kappa$
determined by $\varepsilon$. Also, for any $\varepsilon \in M \cap \mc E$ we use $[\varepsilon]_J$ to denote
the projection $\pi_J \big[[\varepsilon] \big]$, which is an elementary open set in $[0,1]^J$.

Since $\widehat{c}(Y) \le \widehat{c}(X)$, we can chose a collection $\mc F \subs M \cap \mc E$ with $|\mc F| < \widehat{c}(X)$
such that for each $\varepsilon \in \mc F$ we have $\emptyset \ne Y \cap [\varepsilon]_J \subs G$, moreover the family
$\{Y \cap [\varepsilon]_J : \varepsilon \in \mc F\}$ is disjoint and its union is dense in $G$ and hence in $Y$.
Note that $|\mc F| < \widehat{c}(X)$ implies $\mc F \in M$.

Now, the fact that $H = \bigcup \{Y \cap [\varepsilon]_J : \varepsilon \in \mc F\}$ is dense in $Y$ can be reformulated
as follows: For every $\varepsilon \in M \cap \mc E$, if $Y \cap [\varepsilon]_J \ne \emptyset$ then there is some
$\eta \in \mc F$ such that $Y \cap [\varepsilon]_J \cap [\eta]_J \ne \emptyset$. But $Y \cap [\varepsilon]_J \cap [\eta]_J \ne \emptyset$
is clearly equivalent with $X \cap [\varepsilon] \cap [\eta] \ne \emptyset$, consequently the following
statement is satisfied in $M$:
$$\forall\, \varepsilon \in \mc E\,\exists\,\eta \in \mc F\,\big( X \cap [\varepsilon] \ne \emptyset\,\Rightarrow\, X \cap [\varepsilon] \cap [\eta] \ne \emptyset \big).$$
Since all the three parameters of this formula, namely $\mc E,\, \mc F$, and $X$ belong to $M$, by elementarity it is actually true.
But this just means that $X \cap \pi_J^{-1}[H]$ is dense in $X$, hence so is the larger set $X \cap \pi_J^{-1}[G] $.
\end{proof}

Now we turn to giving the promised results that yield the sharpness of Theorem \ref{tm:po}, at least consistently.
First we need a definition.

\begin{definition}
For a space $X$ we call a {\em funnel in $X$} any decreasing $\omega$-sequence of open sets
that has nowhere dense intersection. We say that $X$ has the {\em small transversal property (STP)}
if for any sequence $\langle \mc F_n : n < \omega\rangle$ of funnels in $X$ with $\mc F_n = \langle U_{n,k} : k < \omega\rangle$
for $n < \omega$, there is a function $g : \omega \to \omega$ such that the "transversal" set $\cup_{n < \omega} U_{n,g(n)}$
is {\em not dense} in $X$.
\end{definition}

Next we need a lemma that has nothing to do with PO, or actually any maps.

\begin{lemma}\label{lm:cent}
If $Y$ is a CCC space and $\mc U$ is any {\em maximal centered} subfamily of $\tau^+(Y)$ then
there is a countable subset $\mc V \subs \mc U$ such that $\cap \mc V$ is nowhere dense in $Y$.
\end{lemma}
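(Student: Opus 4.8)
The plan is to exploit the maximality of $\mc U$ to turn it into an ``open ultrafilter'', and then to use the CCC to extract countably many witnesses of smallness whose union is dense.

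First I would record the two structural consequences of maximality. Since $\mc U$ is maximal centered, it is closed under finite intersections: for $U,V\in\mc U$ the set $U\cap V$ is a nonempty open set, and any finite intersection of members of $\mc U$ together with $U\cap V$ is again a finite intersection of members of $\mc U$, hence nonempty; so $\mc U\cup\{U\cap V\}$ is still centered and maximality forces $U\cap V\in\mc U$. Combined with this, maximality yields the key dichotomy: for every $W\in\tau^+(Y)$, either $W\in\mc U$, or there is some $U\in\mc U$ with $W\cap U=\empt$. Indeed, if $W$ meets every member of $\mc U$, then using closure under finite intersections the family $\mc U\cup\{W\}$ is still centered, whence $W\in\mc U$ by maximality.

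Next I would build the witnesses. By Zorn's lemma fix a maximal disjoint family $\mc A$ of nonempty open sets, none of which belongs to $\mc U$. Since $Y$ is CCC, $\mc A$ is countable, say $\mc A=\{W_n:n<\omega\}$, and by the dichotomy above I may choose for each $n$ a set $U_n\in\mc U$ with $W_n\cap U_n=\empt$. I then claim that $\bigcup_n W_n$ is dense in $Y$. If not, some nonempty open $O$ is disjoint from every $W_n$. This is where crowdedness enters: as $O$ is a nonempty open subset of the crowded Hausdorff space $Y$, it contains two disjoint nonempty open subsets $O_0,O_1$; since $\mc U$ is centered it cannot contain both, so at least one, say $O_0$, is not in $\mc U$. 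But then $O_0\subs O$ is a nonempty open set disjoint from all $W_n$ and not in $\mc U$, so $\mc A\cup\{O_0\}$ contradicts the maximality of $\mc A$.

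Finally I would set $\mc V=\{U_n:n<\omega\}\subs\mc U$, a countable subfamily, and check that $\bigcap\mc V$ is nowhere dense. For each $n$ we have $\bigcap\mc V\subs U_n$ and $W_n\cap U_n=\empt$, hence $W_n\cap\bigcap\mc V=\empt$; thus $\bigcap\mc V$ is disjoint from the dense open set $\bigcup_n W_n$, and a set disjoint from a dense open set is contained in a closed nowhere dense set, so it is nowhere dense. I expect the main obstacle to be precisely the density step, i.e.\ producing inside an arbitrary nonempty open set a nonempty open piece lying outside $\mc U$; this is exactly where both the crowdedness of $Y$ and the centeredness of $\mc U$ are genuinely used. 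Everything else (the ultrafilter-style dichotomy and the CCC-driven countability of $\mc A$) is routine once this point is secured.
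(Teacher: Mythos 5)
Your proof is correct and follows essentially the same route as the paper's: both take a maximal disjoint family of non-empty open sets lying outside $\mc U$, use CCC for its countability, exploit closure of $\mc U$ under finite intersections to pick a member of $\mc U$ disjoint from each piece, and then show the union of the family is dense via the crowdedness/centeredness dichotomy, so that $\bigcap \mc V$ avoids a dense open set. The only cosmetic difference is how the density step is phrased (you add a piece not in $\mc U$ to contradict maximality of the disjoint family, while the paper notes both halves would have to lie in $\mc U$, contradicting centeredness); these are the same argument.
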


\begin{proof}
Let $\mc W$ be a maximal disjoint subset of $\tau^+(Y) \setm \mc U$, then $\mc W$ is countable
because  $Y$ is  CCC. Note that $\mc U$ is closed under finite intersections being maximal centered,
hence for each $W \in \mc W$ there is some $U_W \in \mc U$ with $W \cap U_W = \emptyset$.

We claim that $\cup \mc W$ is dense in $Y$. Indeed, otherwise we would have two disjoint non-empty open subsets
$U_0,\,U_1$ of $Y \setm \overline{\cup \mc W}$ and by the maximality of $\mc W$ both of them
would have to belong to $\mc U$,  that is clearly absurd. But this means that $\mc V = \{U_W : W \in \mc W\}$
is as required.
\end{proof}

This leads us to the following result that yields a necessary condition for PO images of spaces with the STP.

\begin{theorem}\label{tm:pobig}
If the space $X$ has the STP then $\tau^+(Y)$ is not $\sigma$-centered for any PO image $Y$ of $X$.
\end{theorem}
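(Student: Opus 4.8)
I want to prove the contrapositive: if $Y$ is a PO image of $X$ with $\tau^+(Y)$ $\sigma$-centered, then $X$ fails the STP. So let $f : X \to Y$ be a PO surjection and suppose $\tau^+(Y) = \bigcup_{n<\omega} \mc C_n$, where each $\mc C_n$ is centered. By enlarging, I may assume each $\mc C_n$ is a \emph{maximal} centered subfamily of $\tau^+(Y)$; this is the key preparatory move, since it lets me apply Lemma \ref{lm:cent}. Note that a $\sigma$-centered space is automatically CCC, so the hypothesis of Lemma \ref{lm:cent} is met. The goal is to manufacture, from this data, a sequence of funnels in $X$ \emph{every} transversal of which is dense, which is exactly the negation of the STP.

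\textbf{Building the funnels.} For each $n < \omega$, Lemma \ref{lm:cent} hands me a countable subfamily $\mc V_n = \{V_{n,k} : k < \omega\} \subs \mc C_n$ whose intersection $\bigcap_k V_{n,k}$ is nowhere dense in $Y$. Replacing $V_{n,k}$ by the decreasing sequence $V_{n,0} \cap \dots \cap V_{n,k}$ (still in $\mc C_n$, as a finite intersection of members of a centered family intersecting everything it should --- here I use that a maximal centered family is closed under finite intersection), I may assume each $\mc V_n$ is a decreasing $\omega$-sequence with nowhere dense intersection, i.e. a funnel in $Y$. Now I pull back: since $f$ is PO, the preimage of a nowhere dense set is nowhere dense, so $\bigcap_k f^{-1}(V_{n,k}) = f^{-1}\big(\bigcap_k V_{n,k}\big)$ is nowhere dense in $X$, and the sequence $\langle f^{-1}(V_{n,k}) : k<\omega\rangle$ is a decreasing sequence of open sets. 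Hence $\mc F_n = \langle f^{-1}(V_{n,k}) : k < \omega\rangle$ is a funnel in $X$ for each $n$.

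\textbf{Every transversal is dense.} It remains to show that for any $g : \omega \to \omega$ the transversal $\bigcup_n f^{-1}(V_{n,g(n)})$ is dense in $X$; since $f$ is PO and hence surjective with dense fibers-behavior, it suffices to show the corresponding set $\bigcup_n V_{n,g(n)}$ is dense in $Y$ and then pull back through Proposition \ref{tm:po-equivalence}(b). To see density in $Y$, take any $W \in \tau^+(Y)$. Because $\tau^+(Y) = \bigcup_n \mc C_n$, there is some $n$ with $W \in \mc C_n$; but $\mc C_n$ is centered and contains $V_{n,g(n)}$, so $W \cap V_{n,g(n)} \ne \emptyset$. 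Thus $W$ meets the transversal, proving density in $Y$. Finally, pulling back: $\bigcup_n V_{n,g(n)}$ dense open in $Y$ forces its $f$-preimage dense in $X$ by Proposition \ref{tm:po-equivalence}, so the transversal $\bigcup_n f^{-1}(V_{n,g(n)})$ is dense. This witnesses that the sequence $\langle \mc F_n : n<\omega\rangle$ has no non-dense transversal, so $X$ fails the STP, which is the contrapositive we wanted.

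\textbf{Anticipated obstacle.} The one place demanding care is the interplay between ``centered'' and ``maximal centered.'' The density argument in the last paragraph needs that \emph{every} $W \in \mc C_n$ meets the chosen $V_{n,g(n)}$, which is exactly the centeredness (indeed just pairwise compatibility) of $\mc C_n$; and the funnel-extraction step needs closure under finite intersection, which comes from maximality. I should verify at the outset that passing to maximal centered refinements of the $\mc C_n$ preserves the covering $\tau^+(Y) = \bigcup_n \mc C_n$ --- which it does, since enlarging each $\mc C_n$ only grows the union. The rest is a routine translation of nowhere-density between $X$ and $Y$ via the PO hypothesis, with Proposition \ref{tm:po-equivalence} doing the bookkeeping.
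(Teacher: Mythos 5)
Your proposal is correct and follows essentially the same route as the paper's proof: pass to maximal centered families, extract countable decreasing subfamilies with nowhere dense intersection via Lemma \ref{lm:cent}, pull them back through the PO map to get funnels in $X$, and observe that centeredness forces every transversal $\bigcup_n V_{n,g(n)}$ to be dense in $Y$, hence (by Proposition \ref{tm:po-equivalence}) its preimage is dense in $X$, contradicting (equivalently, negating) the STP. The only difference is cosmetic: you argue by contrapositive where the paper argues by contradiction, and you spell out the small verifications (preservation of the covering under maximal refinement, closure under finite intersections) that the paper leaves implicit.
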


\begin{proof}
Assume, on the contrary, that $f$ is a PO map of $X$ onto $Y$ and $\tau^+(Y)$ is $\sigma$-centered.
Then we may write $\tau^+(Y) = \cup_{n < \omega} \mc U_n$, where we may assume without any loss of generality
that each $\mc U_n$ is maximal centered in $\tau^+(Y)$.

Clearly, if $\tau^+(Y)$ is $\sigma$-centered then $Y$ is CCC, hence we may apply Lemma \ref{lm:cent} to find for each $n < \omega$
a countable subset $\mc V_n \subs \mc U_n$ such that $S_n = \cap \mc V_n$ is nowhere dense in $Y$.
Since $\mc U_n$ is closed under finite intersections, we may clearly assume that $\mc V_n = \{V_{n,k} : k < \omega\}$
is decreasing in $k$ for each $n < \omega$.
Then each $\mc F_n = \< f^{-1}[V_{n,k}] : k < \omega \>$ is a funnel in $X$ because $\cap \mc F_n = f^{-1}[S_n]$
is nowhere dense in $X$, for $f$ is PO.

By the STP then there is a function $g : \omega \to \omega$ such that $\cup_{n < \omega}f^{-1}[V_{n,g(n)}]$
is  not dense in $X$. On the other hand, no matter how we choose $U_n \in \mc U_n$ for each $n < \omega$,
then $\cup_{n < \omega} U_n$ is dense in $Y$ because every $U \in \tau^+(Y)$ belongs to some $\mc U_n$ and
hence intersects $U_n$. This, however, implies that while $W = \cup_{n < \omega}V_{n,g(n)}$ is dense open
in $Y$, its inverse image $f^{-1}[W] = \cup_{n < \omega}f^{-1}[V_{n,g(n)}]$ is not dense in $X$,
contradicting that $f$ is PO.
\end{proof}

\begin{corollary}\label{co:posharp}
Let $X$ be any space with the STP and assume that $Y$ is a PO image of $X$. Then\\
(i) $\den(Y) > \omega$ and\\
\smallskip
(ii) if $Y$ is also CCC and Martin's axiom holds then $\pi(Y) \ge \mathfrak{c}$.
\end{corollary}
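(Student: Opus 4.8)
The plan is to derive both parts from Theorem \ref{tm:pobig}, which guarantees that $\tau^+(Y)$ fails to be $\sigma$-centered for every PO image $Y$ of a space $X$ with the STP. Thus in each part it suffices to show that negating the desired conclusion would force $\tau^+(Y)$ to be $\sigma$-centered, producing a contradiction.

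For (i), I would argue by contradiction: suppose $\den(Y) = \omega$ and fix a countable dense set $D \subseteq Y$. For each $d \in D$ the family $C_d = \{U \in \tau^+(Y) : d \in U\}$ is centered, since every finite subfamily shares the point $d$, and since $D$ is dense we have $\bigcup_{d \in D} C_d = \tau^+(Y)$. Then $\tau^+(Y)$ is $\sigma$-centered, contradicting Theorem \ref{tm:pobig}, so in fact $\den(Y) > \omega$.

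For (ii), assume MA, that $Y$ is CCC, and, toward a contradiction, that $\pi(Y) = \kappa < \mathfrak{c}$; fix a $\pi$-base $\{V_\alpha : \alpha < \kappa\}$. I would run an MA argument to again manufacture a $\sigma$-centered decomposition of $\tau^+(Y)$. Let $\mathbb{P}$ be the finite-support product of $\omega$ copies of $(\tau^+(Y), \supseteq)$: a condition is a finite partial function $p$ with $\dom(p) \in [\omega]^{<\omega}$ and $p(k) \in \tau^+(Y)$, ordered by $q \le p$ iff $\dom(q) \supseteq \dom(p)$ and $q(k) \subseteq p(k)$ for all $k \in \dom(p)$. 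The single factor $(\tau^+(Y), \supseteq)$ is CCC because an antichain in it is a disjoint family of open sets. For each $\alpha < \kappa$ the set $D_\alpha = \{p \in \mathbb{P} : \exists\, k \in \dom(p)\ p(k) \subseteq V_\alpha\}$ is dense (extend a condition on a fresh coordinate by a $\pi$-base set inside $V_\alpha$), and there are only $\kappa < \mathfrak{c}$ of them, so MA yields a filter $G$ meeting every $D_\alpha$. Letting $C_k$ be the upward closure in $\tau^+(Y)$ of $G_k = \{p(k) : p \in G,\ k \in \dom(p)\}$, each $G_k$ is directed (use a common extension in $G$), hence centered, and upward closure preserves centeredness, so each $C_k$ is centered. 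Genericity places every $V_\alpha$, and therefore — since $\{V_\alpha\}$ is a $\pi$-base — every member of $\tau^+(Y)$ into some $C_k$, whence $\tau^+(Y) = \bigcup_{k < \omega} C_k$ is $\sigma$-centered, contradicting Theorem \ref{tm:pobig}. Thus $\pi(Y) \ge \mathfrak{c}$.

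The routine points — density of the $D_\alpha$, centeredness of the $C_k$, and the covering $\bigcup_k C_k = \tau^+(Y)$ — are straightforward bookkeeping. The step carrying real content, and the one I expect to be the main obstacle, is verifying that $\mathbb{P}$ is CCC: this is precisely where Martin's axiom is needed, via the fact that under MA the CCC property is productive, so that a $\Delta$-system reduction of any putative uncountable antichain in $\mathbb{P}$ to a finite subproduct $(\tau^+(Y), \supseteq)^r$ yields the required contradiction.
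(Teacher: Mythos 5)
Your proposal is correct and, at the top level, takes the same route as the paper: both parts are reduced to Theorem \ref{tm:pobig} by showing that the negation of the desired conclusion would force $\tau^+(Y)$ to be $\sigma$-centered. Part (i) is exactly the paper's argument (the paper states it in one line: the topology of a separable space is $\sigma$-centered). The difference is in (ii): the paper simply cites the result of Hajnal and Juh\'asz \cite{HaJu} that under Martin's axiom every CCC space $Y$ with $\pi(Y) < \mathfrak{c}$ has $\sigma$-centered $\tau^+(Y)$, whereas you reprove this fact from scratch by forcing with the finite-support product of $\omega$ copies of $(\tau^+(Y),\supseteq)$. Your dense sets $D_\alpha$, the directedness of the $G_k$, the centeredness of their upward closures $C_k$, and the covering $\tau^+(Y)=\bigcup_k C_k$ via the $\pi$-base are all correct; this is essentially the standard proof of the cited result. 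What your approach buys is self-containedness; what the paper's buys is brevity.

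One imprecision you should repair: the claim that ``under MA the CCC property is productive'' is not literally true. MA (as a statement about meeting fewer than $\mathfrak{c}$ dense sets) is vacuously true under CH, and CH is consistent with the existence of a Suslin tree, whose square is not CCC; productivity of CCC needs MA$_{\omega_1}$, i.e.\ MA together with $\mathfrak{c}>\omega_1$. This does not break your proof, but it forces a case split. If $\mathfrak{c}=\omega_1$, then $\pi(Y)<\mathfrak{c}$ means $Y$ has a countable $\pi$-base, so you only need to meet countably many dense sets $D_\alpha$ and the Rasiowa--Sikorski lemma suffices with no chain condition on $\mathbb{P}$ at all (alternatively: a countable $\pi$-base gives a countable dense set, and part (i) finishes). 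If $\mathfrak{c}>\omega_1$, then MA$_{\omega_1}$ holds, every CCC poset has precaliber $\omega_1$, and your $\Delta$-system reduction to finite subproducts goes through. With that adjustment your argument is a complete proof of the lemma the paper takes from \cite{HaJu}.
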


\begin{proof}
(i) is obvious because the topology of a separable space is $\sigma$-centered. (ii) follows from
the well-known fact that under Martin's axiom $\pi(Y) < \mathfrak{c}$ implies that $\tau^+(Y)$ is $\sigma$-centered
for any CCC space $Y$, see e.g. \cite{HaJu}.
\end{proof}

Of course, to apply these results we need to give examples of spaces with the STP and that's what we shall do now.

\begin{theorem}\label{tm:stp}
(1) Any standard (i.e. $\omega_1$-) Suslin line $L$ has the STP.\\
(2) Assume that the space $X$ admits a probability measure $\mu$ such that
\begin{enumerate}[(i)]
\item $\mu(U) > 0$ for all $U \in \tau^+(X)$;
\smallskip
\item $\mu(G) = 1$ whenever $G$ is dense open in $X$.
\end{enumerate}
Then $X$ has the STP.
\end{theorem}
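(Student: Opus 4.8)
The plan is to handle the two parts separately: the measure-theoretic statement (2) is the clean one, while the Suslin case (1) carries the real difficulty.

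\textbf{Part (2).} First I would observe that hypothesis (ii) forces every nowhere dense set to be $\mu$-null: if $N$ is nowhere dense then $\overline{N}$ is closed nowhere dense, so $X \setm \overline{N}$ is dense open and hence $\mu(X\setm\overline N)=1$ by (ii), giving $\mu(N)\le\mu(\overline N)=0$. Now take any sequence of funnels $\mc F_n = \langle U_{n,k} : k < \omega\rangle$ with nowhere dense intersections $N_n = \bigcap_k U_{n,k}$. Since the $U_{n,k}$ decrease in $k$ and $\mu$ is a probability measure, continuity from above gives $\lim_k \mu(U_{n,k}) = \mu(N_n) = 0$. Hence for each $n$ I may pick $g(n)$ with $\mu(U_{n,g(n)}) < 2^{-n-2}$, so that $\mu\big(\bigcup_n U_{n,g(n)}\big) \le \sum_n \mu(U_{n,g(n)}) < 1$. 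But $\bigcup_n U_{n,g(n)}$ is open, and were it dense it would be dense open and hence of measure $1$ by (ii); so it is not dense, witnessing the STP. The only routine point is the measurability of the open sets involved, guaranteed once $\mu$ is taken to be Borel.

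\textbf{Part (1).} I would first reduce the STP for $L$ to a single cleaner statement, using that escaping a funnel is \emph{permanent under shrinking}: if an open set $W$ satisfies $W \cap U_{n,k} = \emptyset$, then so does every subset of $W$. Consequently it suffices to produce one nonempty open interval $I_*$ and a function $g$ with $I_* \cap U_{n,g(n)} = \emptyset$ for all $n$; then $\bigcup_n U_{n,g(n)}$ misses $I_*$ and is not dense. Two structural facts are available. Taking $L$ (as usual) to be a Dedekind complete, dense-in-itself Suslin line, $L$ is a Baire space with compact closed intervals; in particular \emph{no funnel can fill an interval}: if $U_{n,k}$ were dense in a nonempty open $J$ for every $k$, then $\bigcap_k U_{n,k}$ would be a dense $G_\delta$ in $J$, contradicting that $N_n$ is nowhere dense. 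Thus inside any interval, for each fixed $n$, some $U_{n,k}$ fails to be dense, leaving a subinterval that escapes $\mc F_n$. Secondly, by Theorem~\ref{tm:sh_le} the shattering tree $\mc T \subs \RO^+(L)$ of $L$ is a Suslin tree of height $\omega_1$: indeed $\widehat{c}(L)=\omega_1$, while $\cel(L)=\omega$ forces $\ostwo(L)\le \sh(L)\cdot\log\omega=\sh(L)\cdot\omega$ by Theorem~\ref{tm:log}, and $\ostwo(L)=\omega_1$ by Theorem~\ref{tm:sus}, so $\sh(L)=\omega_1$ and the construction in Theorem~\ref{tm:sh_le} runs to height $\omega_1$ with dense-union, refining levels.

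With these in hand I would reformulate the goal on $\mc T$. For each $n$ set $D_n = \{R \in \mc T : \exists k\ (R \cap U_{n,k} = \emptyset)\}$. The no-fill fact, together with the refining dense-union structure of the levels, shows each $D_n$ is dense in $\mc T$ (every node has a descendant escaping $\mc F_n$), and $D_n$ is downward closed, since a descendant of an escaping node escapes via the same $k$. A node $R_* \in \bigcap_n D_n$ is exactly the escaping interval sought, with $g(n)$ the witnessing index for $R_*$ and $\mc F_n$. So the problem reduces to: the Suslin tree $\mc T$ admits a single node lying in all of the countably many dense, downward-closed sets $D_n$. Concretely this means choosing a descending $\omega$-chain $R_0 \supseteq R_1 \supseteq \cdots$ with $R_n \in D_n$ whose limit $\inte\big(\bigcap_n \overline{R_n}\big)$ is nonempty, hence again a node of $\mc T$ below all the $R_n$.

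The main obstacle is precisely guaranteeing that this limit does not collapse to empty interior, i.e.\ steering the chain to a surviving limit node rather than shrinking to a point or gap. This is exactly the step that fails for a separable space such as $[0,1]$: there one aims funnels at a countable dense set $\{q_n\}$ (with $U_{n,k}$ a shrinking neighbourhood of $q_n$), so that every transversal contains all the $q_n$ and is dense, and every escape attempt in the associated height-$\omega$ tree collapses; thus $[0,1]$ fails the STP. For $L$ the escape must instead exploit non-separability, encoded in the fact that $\mc T$ has height $\omega_1$: since $L$ has no countable dense set, the countably many requirements $D_n$ are realised at countable levels well below the top, leaving room to land on a surviving limit node. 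I expect the clean execution to be a transfinite fusion along $\mc T$, using that below any node the subtree again has height $\omega_1$, and that CCC bars an uncountable supply of disjoint "gap" intervals — so the construction must terminate in a genuine escaping interval rather than collapsing cofinally. Verifying this non-collapse rigorously is the heart of the argument.
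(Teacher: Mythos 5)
Your part (2) is correct and is essentially the paper's own proof: nowhere dense sets are null by (ii), continuity from above gives $\mu(U_{n,k})\to 0$ along each funnel, a transversal of total measure $<1$ cannot be dense. No complaint there.

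Part (1), however, contains a genuine gap -- in fact two. First, your reduction needs each $D_n=\{R\in\mc T:\exists k\ (R\cap U_{n,k}=\emptyset)\}$ to be dense in the shattering tree $\mc T$, and this does not follow from "no funnel fills an interval" plus dense-union levels. The nodes of the tree from Theorem \ref{tm:sh_le} are arbitrary regular open sets (the splitting pairs $P(R),Q(R)$ are chosen arbitrarily), not intervals, and the dense-union property of a level only gives descendants of $R$ \emph{meeting} a prescribed open $W\subs R$ with $W\cap U_{n,k}=\emptyset$, not descendants contained in $W$ or disjoint from $U_{n,k}$; indeed a shattering tree need not contain any node inside a given open set (its nodes can all be "spread out"), so $D_n$ can fail to be dense for a badly built $\mc T$. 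Second, and more importantly, the step you yourself flag as "the heart of the argument" -- producing a single node (equivalently, a single non-empty open set) in $\bigcap_n D_n$, i.e.\ preventing the fusion from collapsing to empty interior -- is exactly the content of the theorem, and you do not prove it; a descending chain $R_0\supseteq R_1\supseteq\cdots$ with $R_n\in D_n$ exists trivially (Rasiowa--Sikorski), but its limit can have empty interior, and compactness of closed intervals only prevents empty intersection, not a one-point intersection. What would close this gap is the standard $\omega$-distributivity argument for Suslin trees/lines: each $D_n$ contains a maximal antichain, which is countable by CCC and hence lies below some countable level $\alpha_n$; any node at level $\sup_n\alpha_n$ then lies in every $D_n$ (this needs the tree to be appropriately pruned and, per the first gap, built from intervals). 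None of this machinery appears in your sketch.

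For comparison, the paper's proof of (1) is far more elementary and avoids trees entirely: since $L$ is hereditarily Lindelöf, each $U_{n,k}$ is a countable union of open intervals; the set $A$ of all endpoints of all these intervals is countable, hence \emph{not dense} because $L$ is not separable; any interval $J$ with $J\cap A=\emptyset$ satisfies, for every constituent interval $I$, either $J\subs I$ or $J\cap I=\emptyset$, hence for every $n,k$ either $J\subs U_{n,k}$ or $J\cap U_{n,k}=\emptyset$; and since $\bigcap_k U_{n,k}$ is nowhere dense, for each $n$ some $g(n)$ gives $J\cap U_{n,g(n)}=\emptyset$, so the transversal misses $J$. This endpoint argument is precisely the missing "non-collapse" ingredient (it is, in essence, the proof that a Suslin line is $\omega$-distributive), packaged without any tree.
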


\begin{proof}
For (1), consider any sequence $\langle \mc F_n : n < \omega\rangle$ of funnels in $L$ with $\mc F_n = \langle U_{n,k} : k < \omega\rangle$
for $n < \omega$. Now, every open set $U_{n,k}$ is the union of a countable collection $\mc I_{n,k}$ of open intervals because
$L$ is hereditarily Lindelöf. Let $A_{n,k}$ be the set of end points of the intervals in $\mc I_{n,k}$ and let $A$ be the union
of all the  $A_{n,k}$'s. Then $A$ is countable, hence not dense in $L$, so there is a non-empty open interval $J$ of $L$ with $J \cap A = \emptyset$.
But then, for any $n,\,k < \omega$ and $I \in \mc I_{n,k}$, we must have either $J \subs I$ or  $J \cap I = \emptyset$.
Since $\cap \mc F_n$ is nowhere dense, this means that for each $n < \omega$ there is $g(n) < \omega$ such that $J \cap I_{n,g(n)} = \emptyset$,
hence $\cup_{n < \omega} U_{n,g(n)}$ is {\em not dense} in $L$.

\smallskip

To see (2), note first that for any funnel $\mc F = \langle U_{k} : k < \omega\rangle$ in $X$ we have $\mu(\cap\mc F) = 0$ by condition (ii),
consequently the positive values $\mu(U_{k})$ converge to $0$. Thus if $\langle \mc F_n : n < \omega\rangle$ is any sequence of funnels in $X$ then we can
clearly pick  members $U_n$ of $\mc F_n$ for $n < \omega$ such that $\sum_{n < \omega}\mu(U_n) < 1$. But then  $\mu(\cup_{n < \omega}U_n) < 1$ as well, hence
$\cup_{n < \omega}U_n$ is not dense, again by (ii).
\end{proof}

Of course, we already know that a Suslin line does not even have a NWC image of countable weight, hence part (1) of Theorem \ref{tm:stp} does not give us
anything new in that respect. However, we have two interesting examples of type (2).

\begin{example}
Our first example is the interval $[0,1]$ equipped with the standard Lebesgue measure $\lambda$ and not with the standard topology but with the density topology $\delta$;
it is known that this space is Tychonov, see e.g. \cite{T}. Obviously,
the identity map of $[0,1]$ considered as a map from $\delta$ to the standard topology is NWC. On the other hand, by Theorem \ref{tm:stp}
and Corollary \ref{co:posharp}, $[0,1]$ equipped with $\delta$ has no separable PO image. 

This space $X$ is also CCC and hence so is any continuous image of it,
hence, if  Martin's axiom holds then by Corollary \ref{co:posharp} any PO image $Y$ of $X$ satisfies $\we(Y) \ge \pi(Y) \ge \mathfrak{c}$.
Since Martin's axiom is consistent with the continuum $\mathfrak{c}$ being arbitrarily large,
this indeed shows the consistent sharpness of Theorem \ref{tm:po} for CCC Tychonov spaces, i.e. those for which $\widehat c(X) = \omega_1$ holds.
\end{example}

We do not know the answer to the following question.

\begin{problem}
Is there in ZFC a CCC Tychonov space $X$ such that for any PO image $Y$ of $X$ we have $\we(Y) \ge \mathfrak{c}$?
\end{problem}

\begin{example}
Our second example is the compact $L$-space $K$ that was constructed from CH by Kunen in \cite{Ku}. $K$ also carries a measure $\mu$ as in (2) of Theorem \ref{tm:stp},
hence it does not admit a separable PO image. On the other hand, $K$ was constructed as a closed subspace of the Cantor cube $D(2)^{\omega_1}$ in such a way that that $\pi_\omega[K]$,
i.e. the projection of $K$ to the first $\omega$ indices is all of $D(2)^{\omega}$, hence the Cantor set is an NWC image of $K$.
\end{example}

\bigskip

\section{The case of ${\omega}^*$}

In this section we collected everything we could prove concerning the previously discussed topics
in the case of the 0-dimensional compact space $\omega^*$, the \v Cech - Stone remainder of $\omega$.
We think it is interesting that the values on $\omega^*$ of the various cardinal functions we introduced above
coincide with various well-known and well-studied cardinal characteristics of the continuum,
see e.g. Chapter 9 of \cite{Hal}. The topological facts about ${\omega}^*$ that we shall use are well known,
they can be found e.g. in \cite{vM}.

For any infinite set $A \in [\omega]^\omega$ we write $A^* = \overline{A} \setm A$, where the closure is taken
in $\beta\omega$. Then $\mathcal{B} = \{A^* : A \in [\omega]^\omega\}$ is the family of all clopen subsets
of ${\omega}^*$ and hence is a base for ${\omega}^*$.

\begin{theorem}
$\sh(\omega^*) =\ostwo({\omega}^*)= \mathfrak{h}$, where $\mathfrak{h}$ is the shattering number for $[\omega]^\omega$,
see \cite{BPS} or \cite{Hal}.
\end{theorem}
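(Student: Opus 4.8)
The plan is to split the task into two independent parts: identifying $\sh(\omega^*)$ with the combinatorial cardinal $\mathfrak h$, and then reading off $\ostwo(\omega^*)=\sh(\omega^*)$ almost for free from Theorem~\ref{tm:log}. For the latter I would first note that $\cel(\omega^*)=\mathfrak c=2^\omega$, since an almost disjoint family of size $\mathfrak c$ on $\omega$ gives $\mathfrak c$ pairwise disjoint clopen sets $a^*$, while $\we(\omega^*)=\mathfrak c$ bounds cellularity from above. Hence $\log\cel(\omega^*)=\log(2^\omega)=\omega$, and Theorem~\ref{tm:log} yields $\ostwo(\omega^*)\le\sh(\omega^*)\cdot\omega=\sh(\omega^*)$, the last equality because $\sh(\omega^*)=\mathfrak h$ is uncountable. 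Combined with the general inequality $\sh(X)\le\ostwo(X)$ recorded just before Theorem~\ref{tm:log}, this gives $\ostwo(\omega^*)=\sh(\omega^*)$ as soon as the first part is settled.

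For the upper bound $\sh(\omega^*)\le\mathfrak h$ I would use the characterisation of $\mathfrak h$ as the least size of a \emph{shattering family of MAD families} (see \cite{BPS},\cite{Hal}): a family $\{\mc A_\alpha:\alpha<\mathfrak h\}$ of MAD families on $\omega$ such that every $x\in[\omega]^\omega$ has infinite intersection with at least two members of some $\mc A_\alpha$. Turning each $\mc A_\alpha$ into $\mc S_\alpha=\{a^*:a\in\mc A_\alpha\}$, I obtain disjoint families of nonempty clopen (hence regular open) sets, i.e. members of $\ce(\omega^*)$. Recalling that $\{x^*:x\in[\omega]^\omega\}$ is a base, for any $U\in\tau^+(\omega^*)$ I would pick a basic clopen $x^*\subs U$; the shattering property supplies an $\alpha$ with $x$ meeting two members of $\mc A_\alpha$ infinitely, i.e. $x^*$, and a fortiori $U$, meeting two members of $\mc S_\alpha$. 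Thus $\{\mc S_\alpha:\alpha<\mathfrak h\}$ is a shattering family for $\omega^*$ in the sense of Definition~\ref{def:sh}, so $\sh(\omega^*)\le\mathfrak h$.

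For the lower bound $\sh(\omega^*)\ge\mathfrak h$ I would argue by contradiction, starting from a shattering family $\mathfrak F=\{\mc S_\alpha:\alpha<\kappa\}$ with $\kappa<\mathfrak h$. First I would enlarge each $\mc S_\alpha$ to a maximal disjoint family $\mc S_\alpha'\subs\RO^+(\omega^*)$, that is, a maximal antichain of the complete Boolean algebra $\RO(\omega^*)$; since $\mc S_\alpha\subs\mc S_\alpha'$, the enlarged family still shatters $\omega^*$. Now $\RO(\omega^*)$ is the Boolean completion of $\mc P(\omega)/\mathrm{fin}$, so its distributivity number is again $\mathfrak h$; as $\kappa<\mathfrak h$ the algebra is $\kappa$-distributive, and the $\kappa$ maximal antichains $\{\mc S_\alpha':\alpha<\kappa\}$ have a common refinement. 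Concretely this produces a nonempty regular open $V$ lying below some member $S_\alpha\in\mc S_\alpha'$ for every $\alpha$. Then $V\subs S_\alpha$ makes $V$ meet $S_\alpha$ while being disjoint from every other (disjoint) member of $\mc S_\alpha'$, so $V$ meets \emph{exactly one} member of each $\mc S_\alpha'$, contradicting that $\{\mc S_\alpha'\}$ shatters $\omega^*$. Hence $\kappa\ge\mathfrak h$.

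The step I expect to be the main obstacle is precisely this lower bound, because of a direction-of-implication subtlety: ``$V$ meets exactly one member of a maximal antichain'' is strictly weaker than ``$V$ lies below that member'', the discrepancy between the two being exactly the failure of infinite distributivity. One therefore cannot manufacture $V$ by hand; it must be handed over by the $\kappa$-distributivity of $\mc P(\omega)/\mathrm{fin}$ (equivalently of its completion $\RO(\omega^*)$) below $\mathfrak h$, and then fed into the shattering hypothesis in the one direction that does work. Passing to maximal antichains inside the completion $\RO(\omega^*)$, rather than trying to stay with MAD families of clopen sets, is what keeps this argument clean, and checking that shattering is inherited by the enlarged families is the only routine point that remains.
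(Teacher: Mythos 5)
Your proposal is correct, and half of it coincides exactly with the paper's proof: the reduction of $\ostwo(\omega^*)$ to $\sh(\omega^*)$ via Theorem~\ref{tm:log}, using the general inequality $\sh(X)\le\ostwo(X)$ together with $\cel(\omega^*)=\mathfrak{c}$ and $\log\mathfrak{c}=\omega$, is word for word the paper's argument. The difference lies in the identity $\sh(\omega^*)=\mathfrak{h}$, which the paper disposes of as ``inspecting and comparing the respective definitions'' through the duality $|A\cap B|<\omega\Leftrightarrow A^*\cap B^*=\emptyset$; that is, both inequalities are meant to be direct translations of shattering families across Stone duality. Your inequality $\sh(\omega^*)\le\mathfrak{h}$ is exactly that translation, but for $\sh(\omega^*)\ge\mathfrak{h}$ you argue differently: you extend the given disjoint families to maximal antichains of the complete Boolean algebra $\RO(\omega^*)$, identify $\RO(\omega^*)$ with the completion of $\mc P(\omega)/\mathrm{fin}$, and invoke $\kappa$-distributivity below $\mathfrak{h}$. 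This is valid, but note that it quietly uses \emph{both} standard descriptions of $\mathfrak{h}$ --- least size of a shattering system of MAD families (for your upper bound) and distributivity number of $\mc P(\omega)/\mathrm{fin}$ (for your lower bound) --- so you are importing the Balcar--Pelant--Simon equivalence of the two, whereas the paper's intended route needs only the shattering description. On the other hand, the direct translation does require one step that both you and the paper leave implicit, since members of a shattering family for $\omega^*$ are regular open but need not be clopen: inside each $R\in\mc S_\alpha$ one fixes a maximal almost disjoint family $\mc B_R$ of infinite sets $A$ with $A^*\subs R$; maximality makes $\bigcup\{A^*:A\in\mc B_R\}$ dense in $R$, so $\mc A_\alpha=\bigcup\{\mc B_R:R\in\mc S_\alpha\}$ is an almost disjoint family on $\omega$, and any $x^*$ meeting two members of $\mc S_\alpha$ yields two members of $\mc A_\alpha$ meeting $x$ in an infinite set. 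In particular, your closing claim that the lower bound ``cannot be manufactured by hand'' and must come from distributivity is overstated: the downward translation just described never needs to produce the refining set $V$ at all.
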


\begin{proof}
$\sh(\omega^*) = \mathfrak{h}$ simply follows by inspecting and comparing the respective definitions,
using that for any $A,\,B \in [\omega]^\omega$ we have $|A \cap B| < \omega$ iff $A^* \cap B^* = \emptyset$.
In fact, this observation motivated us in our choice of terminology for the cardinal function $sh(X)$.

Since $\ostwo(X) \ge \sh(X)$ holds for all $X$, 
it suffices to show $\ostwo({\omega}^*) \le  \mathfrak{h}$. But according to Theorem \ref{tm:log}
we also have $\ostwo(X) \le \sh(X) \cdot \log( \cel(X))$ and thus $$\ostwo({\omega}^*) \le  \mathfrak{h} \cdot \log \mathfrak{c} = \mathfrak{h}$$
because $\cel({\omega}^*) = \mathfrak{c}$ and $\log \mathfrak{c} = \omega$.
\end{proof}

\begin{theorem}
$\osthree({\omega}^*)= \mathfrak{s}$, where $\mathfrak{s}$ is the splitting number for $[\omega]^\omega$.
\end{theorem}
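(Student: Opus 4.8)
The plan is to establish the two inequalities $\osthree(\omega^*) \le \mathfrak{s}$ and $\osthree(\omega^*) \ge \mathfrak{s}$ separately, in each case translating between the topological splitting of $\tau^+(\omega^*)$ and the combinatorial splitting of $[\omega]^\omega$ through the standard dictionary for the clopen base $\mc B = \{A^* : A \in [\omega]^\omega\}$. Recall that $A^* \cap B^* = (A \cap B)^*$, so $A^* \cap B^* \ne \empt$ iff $A \cap B$ is infinite, while $A^*$ and $B^*$ are disjoint iff $A \cap B$ is finite; moreover every clopen set is its own closure, so for a family of clopen sets $T_3$-splitting and $T_2$-splitting coincide.

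For the inequality $\osthree(\omega^*) \le \mathfrak{s}$, I would start from a splitting family $\mc S = \{S_\xi : \xi < \mathfrak{s}\} \subs [\omega]^\omega$ and consider the clopen family $\mc A = \{S_\xi^* : \xi < \mathfrak{s}\} \cup \{(\omega \setm S_\xi)^* : \xi < \mathfrak{s}\}$, which has $|\mc A| \le \mathfrak{s}$. Given an arbitrary $B \in \tau^+(\omega^*)$, fix a non-empty basic clopen set $C^* \subs B$. Since $\mc S$ splits $C$, there is $\xi$ for which both $C \cap S_\xi$ and $C \setm S_\xi$ are infinite; then $S_\xi^* \cap C^* = (C \cap S_\xi)^* \ne \empt$ and $(\omega \setm S_\xi)^* \cap C^* = (C \setm S_\xi)^* \ne \empt$, while $S_\xi^*$ and $(\omega \setm S_\xi)^*$ are disjoint. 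As disjoint clopen sets have disjoint closures and both meet $C^* \subs B$, the family $\mc A$ $\,T_3$-splits $\tau^+(\omega^*)$, whence $\osthree(\omega^*) \le \mathfrak{s}$.

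For the reverse inequality, let $\mc A$ be an open family that $T_3$-splits $\tau^+(\omega^*)$ with $|\mc A| = \osthree(\omega^*)$. Since $\omega^*$ is compact Hausdorff, hence normal, and Lindelöf so that $\lin(\omega^*) = \omega$, Theorem \ref{tm:t3cutsfrombase} applied to the clopen base $\mc B$ produces a clopen family $\mc U = \{A_i^* : i \in I\} \subs \mc B$ that also $T_3$-splits $\tau^+(\omega^*)$ with $|\mc U| \le \osthree(\omega^*) \cdot \omega = \osthree(\omega^*)$. I then claim that $\{A_i : i \in I\}$ is a splitting family for $[\omega]^\omega$: given $B \in [\omega]^\omega$, applying the $T_3$-splitting property to $B^*$ yields $i, j \in I$ with $A_i^* \cap A_j^* = \empt$ and $A_i^* \cap B^* \ne \empt \ne A_j^* \cap B^*$, i.e. $A_i \cap A_j$ is finite while $A_i \cap B$ and $A_j \cap B$ are both infinite. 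Then $A_i$ splits $B$, since $B \cap A_i$ is infinite and, as $A_i \cap A_j$ is finite, $B \setm A_i$ contains all but finitely many elements of the infinite set $B \cap A_j$ and is therefore infinite as well. Consequently $\mathfrak{s} \le |I| = |\mc U| \le \osthree(\omega^*)$, completing the argument.

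The only genuinely delicate step is this passage to clopen sets in the lower bound: a general open member of a $T_3$-splitting family is a union of many basic clopen pieces and carries no canonical associated subset of $\omega$, so one cannot read a splitting family off it directly; Theorem \ref{tm:t3cutsfrombase} is exactly what remedies this, and it is crucial that $\lin(\omega^*) = \omega$ keeps the reduction from enlarging the cardinality. It is instructive to compare this with the value $\ostwo(\omega^*) = \mathfrak{h}$: in the $T_2$ case one may exploit open sets whose closures overlap, which can make splitting strictly cheaper, whereas the $T_3$ requirement effectively forces clopen sets, for which disjointness of the sets and of their closures coincide, and this is what pins the value down to $\mathfrak{s}$.
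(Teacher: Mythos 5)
Your proposal is correct and follows essentially the same route as the paper: the upper bound via the clopen family $\{S^*,\,(\omega\setminus S)^*\}$ arising from a combinatorial splitting family, and the lower bound by invoking Theorem \ref{tm:t3cutsfrombase} (with normality of $\omega^*$ and $\lin(\omega^*)=\omega$) to reduce to a $T_3$-splitting subfamily of the clopen base and then reading off a splitting family for $[\omega]^\omega$. You merely spell out the details the paper labels ``straightforward'' (e.g.\ that $B\setminus A_i$ is infinite because it almost contains the infinite set $B\cap A_j$), which is a faithful completion rather than a different argument.
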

\begin{proof}
If $\mc S\subs \br {\omega};{\omega};$ is a splitting family for $[\omega]^\omega$ then
 \begin{displaymath}
  \{S^*, {\omega}^*\setm S^*: S\in \mc S\}
 \end{displaymath}
clearly $T_3$-splits $\mc B$, a base for ${\omega}^*$, and so it  $T_3$-splits $\tau^+({\omega}^*)$ as well.
Thus we have $\osthree({\omega}^*) \le \mathfrak{s}$.

Next, by Theorem \ref{tm:t3cutsfrombase},
there is a $T_3$-splitting family $\mc U$ for ${\omega}^*$ of size $\osthree({\omega}^*)$ with $\mc U \subs \mc B$.
Now, it is straight forward to check that if $\mc U = \{A^* : A \in \mc S\}$ then $\mc S$
is a splitting family for $[\omega]^\omega$, hence $\osthree({\omega}^*) \ge \mathfrak{s}$.
\end{proof}

\begin{theorem}
 $\costhree({\omega}^*)= \mathfrak{c}$.
\end{theorem}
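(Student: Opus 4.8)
The plan is to show $\costhree(\omega^*) = \mathfrak{c}$ by establishing both inequalities. The upper bound $\costhree(\omega^*) \le \mathfrak{c}$ should follow easily: since $\omega^*$ is a compact space of weight $\mathfrak{c}$, it has a base of size $\mathfrak{c}$, and by the remarks preceding Definition~\ref{def:sh} (using that $\omega^*$ is $0$-dimensional, hence $\pi$-regular and Urysohn) one obtains an open $T_3$-splitting family for $CR(\omega^*)$ of size at most $\we(\omega^*) = \mathfrak{c}$. So the real content is the lower bound $\costhree(\omega^*) \ge \mathfrak{c}$, and in fact it would suffice to prove the stronger $\costwo(\omega^*) \ge \mathfrak{c}$, since $\costwo \le \costhree$ by the diagram of trivial relationships.

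\textbf{Lower bound.} To prove $\costwo(\omega^*) \ge \mathfrak{c}$, I would argue by contradiction: suppose $\mc U \subs \tau(\omega^*)$ is a family of open sets with $|\mc U| < \mathfrak{c}$ that $T_2$-splits $CR(\omega^*)$. First I would reduce to the case where each member of $\mc U$ is a basic clopen set $A^*$ with $A \in [\omega]^\omega$; this is legitimate because replacing each $U \in \mc U$ by a clopen subset or by $\inte(\overline U)$ and then refining through the base $\mc B$ does not increase cardinality and, with some care, preserves the ability to $T_2$-split (one must check that whatever crowded set we build is actually split by basic clopen pieces). So we may assume $\mc U = \{A^* : A \in \mc S\}$ for some $\mc S \in [[\omega]^\omega]^{<\mathfrak{c}}$. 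The goal is then to produce a single crowded subspace $E \subs \omega^*$ that $\mc U$ fails to $T_2$-split, i.e. a crowded $E$ such that for no disjoint pair $A^*, B^* \in \mc U$ do both $A^*$ and $B^*$ meet $E$; equivalently, the trace $\{A^* \cap E : A \in \mc S\}$ contains no two disjoint nonempty members.

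\textbf{The main obstacle}, which is where the heart of the argument lies, is constructing that unsplittable crowded set $E$ from a family $\mc S$ of fewer than $\mathfrak{c}$ subsets of $\omega$. The natural strategy is to find an ultrafilter-like or filter-generated closed crowded set that is ``reaped'' or ``indivisible'' with respect to $\mc S$. Concretely, I would look for a point $p \in \omega^*$ (or better, a nowhere-dense-free closed crowded set around such a point) with the property that $p$ lies in the closure of $A^*\cap E$ whenever $A^* \cap E \neq \emptyset$, forcing any two such traces to overlap near $p$. Since $|\mc S| < \mathfrak{c}$ and $\mathfrak{c} = 2^{\aleph_0}$ bounds the number of independent binary decisions, one can hope to diagonalize: enumerate $\mc S$ and build a decreasing tower or an independent family orthogonal to $\mc S$, using that a family of size $< \mathfrak{c}$ cannot split the levels of a sufficiently branching tree of clopen sets. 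The key cardinal-arithmetic input is that any crowded closed subset of $\omega^*$ contains a copy of $\omega^*$ itself (or at least maps onto one), so it carries $\mathfrak{c}$-much ``independence'' that a small family $\mc S$ cannot destroy. I expect the cleanest route is to exhibit explicitly a crowded $E$ homeomorphic to $\omega^*$ sitting inside a single $A^*$-atom determined by a pseudointersection-type construction against $\mc S$, after which the failure of $T_2$-splitting is immediate; pinning down that construction rigorously is the part that will require genuine work rather than bookkeeping.
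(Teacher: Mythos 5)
Your upper bound is fine, but both halves of your lower-bound plan have genuine gaps, and the first one is fatal to the route you chose. You decided to prove the stronger inequality $\costwo(\omega^*)\ge\mathfrak{c}$, and your opening move --- replacing an arbitrary open $T_2$-splitting family for $CR(\omega^*)$ by a same-sized family of basic clopen sets $A^*$ --- is exactly the step that cannot be justified. The reduction-to-a-base result for $T_2$-splitting, Theorem~\ref{tm:t2cutsfrombase}, essentially requires that the family being split consist of \emph{open} sets: there one replaces each $U\in\mc U$ by a disjoint family of basic sets whose union is merely dense in $U$, and a member of $CR(\omega^*)$ (which is typically nowhere dense) can meet $U$ while missing that dense union entirely. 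The paper explicitly flags this (``it is essential that the members of both $\mc A$ and $\mc B$ are open''), and indeed determining $\costwo(\omega^*)$ is stated there as an open problem --- so if your ``with some care'' step worked, you would have solved it. The reduction that \emph{is} available is the $T_3$ one, Theorem~\ref{tm:t3cutsfrombase}: since $\omega^*$ is compact, hence normal and Lindel\"of, any open family that $T_3$-splits $CR(\omega^*)$ refines to a clopen family of the same cardinality. So you should have stayed with $T_3$-splitting, which is all that $\costhree(\omega^*)\ge\mathfrak{c}$ requires.

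Second, even after the (corrected) reduction, the heart of the proof --- exhibiting a crowded set that a clopen family $\mc U=\{A^*:A\in\mc S\}$ with $|\mc S|<\mathfrak{c}$ fails to split --- is never actually constructed; you defer it as the part ``that will require genuine work.'' No tower or independent-family diagonalization is needed. The paper does it in one stroke using Pospi\v sil's theorem \cite{Po}: pick $p\in\omega^*$ with $\chi(p,\omega^*)=\mathfrak{c}$ and let
$F=\bigcap\big(\{U\in\mc U: p\in U\}\cup\{\omega^*\setm U: U\in\mc U,\ p\notin U\}\big)$.
Since by compactness $\psi(p,\omega^*)=\chi(p,\omega^*)=\mathfrak{c}>|\mc U|$, the set $F$ is an infinite closed subset of $\omega^*$; every infinite closed subset of $\omega^*$ contains a copy of $\omega^*$, hence a crowded set; and by construction each $U\in\mc U$ either contains $F$ or is disjoint from it, so no two (disjoint, let alone closure-disjoint) members of $\mc U$ both meet any subset of $F$. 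Your intuition about a point $p$ forcing all traces to ``overlap near $p$'' is aimed in this direction, but without the character-$\mathfrak{c}$ point and this dichotomy the argument has no engine. Note also that the fact you invoke should be stated for \emph{infinite} closed subsets of $\omega^*$, not crowded ones: you only know $F$ is infinite, so quoting it for crowded closed sets would be circular.
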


\begin{proof}
Now $\costhree({\omega}^*) \le \we({\omega}^*) = \mathfrak{c}$ is trivial, hence by Theorem \ref{tm:t3cutsfrombase}
it suffices to show that if $\mc U \subs \mc B$ has cardinality $|\mc U| < \mathfrak{c}$ then $\mc U$ does not $T_3$-split
$CR({\omega}^*)$.

To see this, we apply Pospi\v sil's celebrated result from \cite{Po} to pick a point $p \in {\omega}^*$
of character $\chi(p,{\omega}^*) = \mathfrak{c}$ and put $$\mc V = \{U \in \mc U : p \in U\} \cup \{{\omega}^* \setm U : U \in \mc U \text{ and } p \notin U\}.$$
Let us put $F = \cap \mc V$, then $p \in F$ and $F$ is an infinite closed subset of ${\omega}^*$ because, by compactness, we have
$\psi(p,{\omega}^*) = \chi(p,{\omega}^*) = \mathfrak{c}$. But it is well-known that any infinite closed subset of ${\omega}^*$
includes a copy of ${\omega}^*$ that is crowded, while no two members of $\mc U$ split even $F$.
 \end{proof}

The last two theorems together with \ref{tm:os2nwcnormal} and \ref{tm:cos2nwcnormal} yield the following
corollary determining the minimum weight of NWC, resp. CP images of $\omega^*$.

\begin{corollary}
\begin{enumerate}[(i)]
  \item The minimum weight of a NWC image of $\omega^*$ is equal to $\mathfrak{s}$.
  \item  The weight of any CP image of $\omega^*$ is equal to $\mathfrak{c}$.
\end{enumerate}
\end{corollary}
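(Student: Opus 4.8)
The plan is to derive both parts directly from the two immediately preceding computations, $\osthree(\omega^*) = \mathfrak{s}$ and $\costhree(\omega^*) = \mathfrak{c}$, combined with the general lower bounds (Theorems \ref{lm:os2mwcU} and \ref{lm:cosilew}) and the sharp characterizations for normal domains (Corollaries \ref{tm:os2nwcnormal} and \ref{tm:cos2nwcnormal}). The one genuine point to verify is that every NWC (resp.\ CP) image of $\omega^*$ automatically satisfies the separation hypothesis needed to invoke the sharp $\osthree$ (resp.\ $\costhree$) estimate rather than the weaker $\ostwo$ (resp.\ $\costwo$) one. The key observation is that $\omega^*$ is compact, so any continuous image $Y$ is compact, and since all spaces here are Hausdorff, $Y$ is compact Hausdorff, hence Tychonov and in particular Urysohn. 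This is exactly what upgrades the $\ostwo$-bound to the $\osthree$-bound.

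For part (i), I would let $Y$ be any NWC image of $\omega^*$. By the observation above $Y$ is Urysohn, so Theorem \ref{lm:os2mwcU} gives $\mathfrak{s} = \osthree(\omega^*) \le \we(Y)$, showing that every NWC image has weight at least $\mathfrak{s}$. For achievability, $\omega^*$ is normal (being compact Hausdorff), so Corollary \ref{tm:os2nwcnormal} applies and yields that $\osthree(\omega^*)$ is precisely the minimum weight of a Tychonov NWC image of $\omega^*$; hence there exists an NWC image of weight exactly $\osthree(\omega^*) = \mathfrak{s}$. Combining the two, the minimum weight of an NWC image of $\omega^*$ equals $\mathfrak{s}$.

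For part (ii), the assertion is stronger, namely that \emph{every} CP image has weight exactly $\mathfrak{c}$, not merely the smallest one. The lower bound once more uses that any such $Y$ is Urysohn: Theorem \ref{lm:cosilew} gives $\mathfrak{c} = \costhree(\omega^*) \le \we(Y)$. The matching upper bound is the standard fact that a continuous surjective image of a compact Hausdorff space has weight no larger than the domain, so $\we(Y) \le \we(\omega^*) = \mathfrak{c}$. Together these force $\we(Y) = \mathfrak{c}$ for every CP image $Y$.

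The main---indeed essentially the only---obstacle is the bookkeeping that licenses the Urysohn clauses of Theorems \ref{lm:os2mwcU} and \ref{lm:cosilew}; once compactness of $\omega^*$ guarantees that every image is compact Hausdorff, everything collapses to the already-established values $\osthree(\omega^*) = \mathfrak{s}$ and $\costhree(\omega^*) = \mathfrak{c}$ together with Corollaries \ref{tm:os2nwcnormal} and \ref{tm:cos2nwcnormal}. No new construction is required.
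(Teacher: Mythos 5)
Your proof is correct and takes essentially the same route as the paper, which derives the corollary directly from $\osthree(\omega^*) = \mathfrak{s}$, $\costhree(\omega^*) = \mathfrak{c}$, and Corollaries \ref{tm:os2nwcnormal} and \ref{tm:cos2nwcnormal}; the paper leaves implicit exactly the bookkeeping you spell out (images of $\omega^*$ are compact Hausdorff, hence Urysohn, and for (ii) the standard bound $\we(Y) \le \we(\omega^*) = \mathfrak{c}$ for compact images).
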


We can also determine the weight of PO images of $\omega^*$.

\begin{theorem}
For any PO image $Y$ of $\omega^*$ we have $\widehat{c}(Y) = \mathfrak{c}^+$, hence $\we(Y) = \mathfrak{c}$.
\end{theorem}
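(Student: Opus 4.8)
The plan is to prove the two inequalities $\widehat{c}(Y) \le \mathfrak{c}^+$ and $\widehat{c}(Y) \ge \mathfrak{c}^+$ separately, and then read off $\we(Y) = \mathfrak{c}$. The first is soft: $Y$ is a continuous image of the compact space $\omega^*$, hence is compact Hausdorff with $\we(Y) \le \we(\omega^*) = \mathfrak{c}$, and since cellularity never exceeds weight, $Y$ has no disjoint family of open sets of size $\mathfrak{c}^+$, i.e. $\widehat{c}(Y) \le \mathfrak{c}^+$. So the entire content is to produce in $Y$ a \emph{single} disjoint family of nonempty open sets of size $\mathfrak{c}$; this gives $\widehat{c}(Y) \ge \mathfrak{c}^+$, and then $\mathfrak{c} = \cel(Y) \le \we(Y) \le \mathfrak{c}$ yields the final clause.

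To build the $\mathfrak{c}$ disjoint open sets I would run a Cantor-scheme construction \emph{downstairs}, in $\omega^*$, and transport it by $f$. Fix the PO map $f : \omega^* \to Y$ and recall from Proposition~\ref{tm:po-equivalence} that $\inte\overline{f[U]} \ne \empt$ for every nonempty open $U \subs \omega^*$. Using the clopen base of $\omega^*$ I would recursively define nonempty clopen sets $\{K_s : s \in 2^{<\omega}\}$ with $K_{s^\frown i} \subs K_s$ as follows. Given $K_s$, the set $V_s = \inte\overline{f[K_s]}$ is nonempty open in $Y$; since $Y$ is crowded and, being compact Hausdorff, regular, I can split $V_s$ into nonempty open $V_s^0, V_s^1$ with $\overline{V_s^0} \cap \overline{V_s^1} = \empt$. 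As $f[K_s]$ is dense in $\overline{f[K_s]} \supseteq V_s^i$, each $f^{-1}[V_s^i] \cap K_s$ is nonempty open, so I may pick a nonempty clopen $K_{s^\frown i} \subs f^{-1}[V_s^i] \cap K_s$; note that then $f[K_{s^\frown i}] \subs V_s^i$.

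The decisive, $\omega^*$-specific step is the passage to the $2^\omega$ branches. Along a branch $x \in 2^\omega$ the clopen sets $K_{x\upharpoonright n}$ are \emph{decreasing}, so writing $K_{x\upharpoonright n} = A_n^*$ the infinite sets $A_n$ form a $\subseteq^*$-decreasing tower; since such towers always have an infinite pseudo-intersection (i.e. $\mathcal{P}(\omega)/\mathrm{fin}$ is $\sigma$-closed), there is an infinite $A_x \subseteq^* A_n$ for all $n$, and then $K_x := A_x^*$ is a nonempty clopen set with $K_x \subs K_{x\upharpoonright n}$ for all $n$. Now $L_x := \inte\overline{f[K_x]}$ is nonempty by Proposition~\ref{tm:po-equivalence}, and I claim $\{L_x : x \in 2^\omega\}$ is the desired family. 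Indeed, if $x,y$ first differ at level $n$ with $s = x\upharpoonright n = y\upharpoonright n$, then $f[K_x] \subs f[K_{x\upharpoonright(n+1)}] \subs V_s^{x(n)}$ and likewise $f[K_y] \subs V_s^{y(n)}$, whence $L_x \subs \inte\overline{V_s^0}$ and $L_y \subs \inte\overline{V_s^1}$; as $\overline{V_s^0} \cap \overline{V_s^1} = \empt$, these are disjoint. Thus the $L_x$ are pairwise disjoint and hence $\mathfrak{c}$ many distinct nonempty open sets.

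I expect the main obstacle to be exactly this limit step. A naive attempt to build the tree \emph{upstairs in $Y$} and pull it back fails, because a decreasing $\omega$-sequence of open sets in an arbitrary space need not trap any nonempty open set, so the preimage tower could have empty interior in its intersection. Running the recursion in $\omega^*$ circumvents this: there the branch towers survive thanks to pseudo-intersections, and this is the one place where a property genuinely special to $\omega^*$ (the $\sigma$-closedness of $\mathcal{P}(\omega)/\mathrm{fin}$) is used — it is precisely what makes PO images of $\omega^*$ behave so differently from PO images of arbitrary spaces. The only routine points to verify carefully are that at each splitting node the chosen $V_s^0, V_s^1$ really have disjoint closures inside $V_s$ and that $f[K_{s^\frown i}] \subs V_s^i$, so that the separation achieved at level $n$ is inherited by every branch passing through $s$.
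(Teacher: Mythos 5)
Your proof is correct, and it establishes the same two inequalities as the paper, but by a genuinely different decomposition. The paper runs the Cantor scheme \emph{upstairs}: since $Y$ is a crowded compact Hausdorff space, the standard Cantor tree argument gives a disjoint family of $\mathfrak{c}$ many non-empty closed $G_\delta$ sets in $Y$; each such set $G$ is pulled back to the non-empty closed $G_\delta$ set $f^{-1}(G)$ in $\omega^*$, the classical fact that non-empty closed $G_\delta$ subsets of $\omega^*$ have non-empty interior supplies an open $U_G \subseteq f^{-1}(G)$, and the PO characterization in Proposition \ref{tm:po-equivalence} then pushes these forward to $\mathfrak{c}$ pairwise disjoint non-empty open subsets of $Y$. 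You instead run the scheme \emph{downstairs} in $\omega^*$, interleaved with $Y$ (the two-way splitting with disjoint closures is performed in $Y$, the shrinking to clopen sets in $\omega^*$), and at the branches you replace the $G_\delta$-interior fact by its combinatorial core: the existence of pseudo-intersections for countable $\subseteq^*$-decreasing towers, i.e.\ the $\sigma$-closedness of $\mathcal{P}(\omega)/\mathrm{fin}$. These two $\omega^*$-specific inputs are really the same fact in different clothing --- the usual proof that closed $G_\delta$ sets in $\omega^*$ have non-empty interior is itself a pseudo-intersection argument --- so the proofs are morally equivalent. What yours buys is self-containedness (only the tower fact is quoted, and all sets handled in $\omega^*$ are clopen); what the paper's buys is brevity, by black-boxing both the Cantor tree argument and the $G_\delta$ fact. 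Your explicit treatment of the upper bound $\widehat{c}(Y) \le \mathfrak{c}^+$ via $\cel(Y) \le \we(Y) \le \we(\omega^*) = \mathfrak{c}$, and of the final clause $\we(Y) = \mathfrak{c}$, fills in steps the paper leaves implicit.
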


\begin{proof}
Assume that $f$ is a PO map of $\omega^*$ onto $Y$.
The standard Cantor tree argument gives a disjoint family $\mathcal{G}$ of non-empty closed $G_\delta$ sets in $Y$
with $|\mathcal{G}| = \mathfrak{c}$. But for every $G \in \mc G$ then $f^{-1}(G)$ is a non-empty closed $G_\delta$ set in $\omega^*$
and thus has non-empty interior, say $U_G$. But then, as $f$ is PO, by proposition \ref{tm:po-equivalence} we have
$\emptyset \ne int \big(f[\overline{U_G}]\big) \subs G$, and we are done.
\end{proof}

The following problem contains the only thing that we do not know about $\omega^*$.

\begin{problem}
What is $\costwo({\omega}^*)$?
\end{problem}

\bigskip

\section{An application to densely $k$-separable spaces}

We start this section with a couple of simple definitions taken from \cite{DJ}.

\begin{definition}
A space $X$ is called {\em $k$-separable} if it has a $\sigma$-compact dense subset.
We say that $X$ is {\em densely $k$-separable} if  every dense subspace of $X$ is $k$-separable.
\end{definition}

It was shown in \cite{DJ} that every densely $k$-separable compact space is actually densely separable,
or equivalently, has countable $\pi$-weight. The aim of this section is to present a result on
NWC images of densely $k$-separable spaces which provides an alternative to --
the lengthily and tediously proved -- Lemma 3.1 of \cite{DJ} that was crucial
in the proof of the main result of \cite{DJ}.

We recall that a space $X$ is called {\em feebly compact} if every locally finite family of open sets in $X$
is finite. This is clearly equivalent with the condition that for every decreasing sequence
$\{U_n : n < \omega\} \subs \tau^+(X)$ we have $\cap \{\overline{U_n} : n < \omega\} \ne \emptyset$.
A space is pseudocompact iff it is a feebly compact Tychonov space.

\begin{theorem}
If the feebly compact $\pi$-regular space $X$ is densely $k$-separable then $\sh(X) = \omega$.
Consequently, any pseudocompact space has a compact metrizable NWC image.
\end{theorem}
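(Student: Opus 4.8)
The plan is to prove the first assertion, that a feebly compact, $\pi$-regular, densely $k$-separable space $X$ satisfies $\sh(X) = \omega$, and then harvest the second assertion as a short corollary. For the main claim I would argue by contradiction: suppose $\sh(X) > \omega$ and try to build, level by level, a structure that witnesses the failure of dense $k$-separability. The intuition is that $\sh(X) > \omega$ means no countable family of disjoint regular-open refinements can shatter $X$, so in particular we cannot split $X$ using only countably many pieces; I want to exploit this to locate a dense subspace $D \subs X$ that fails to be $k$-separable, i.e. fails to have a $\sigma$-compact dense subset.

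First I would recall the standing relationship $\sh(X) \le \ostwo(X)$ and the key upper bound from the tools already available: by Theorem \ref{tm:sh_le} we always have $\sh(X) \le \widehat{c}(X)$, so the content is really in showing $\sh(X)$ \emph{cannot} be uncountable here. The natural route is to use feeble compactness to control decreasing sequences of open sets. Concretely, feeble compactness says that for any decreasing $\{U_n : n<\omega\} \subs \tau^+(X)$ we have $\bigcap_n \overline{U_n} \ne \emptyset$; combined with $\pi$-regularity (so that regular closed sets form a $\pi$-network) this prevents ``thin'' shattering trees from dying out at level $\omega$. I would try to show that if $\sh(X) = \omega$ failed, then the shattering tree $\mc T$ constructed as in the proof of Theorem \ref{tm:sh_le} would have to grow past height $\omega$, and a cofinal branch through $\mc T$ would give a decreasing sequence of regular-open sets whose closures have empty intersection of interiors --- contradicting feeble compactness, or else yielding a point through which one extracts a non-$k$-separable dense set.

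The cleaner packaging, which I expect to be the actual argument, is to connect $\sh(X) = \omega$ directly to $\pi$-weight. Since densely $k$-separable compact spaces have countable $\pi$-weight (the result of \cite{DJ} quoted in the text), and feeble compactness is the non-compact surrogate for compactness that still delivers the relevant convergence, I would aim to prove that dense $k$-separability of $X$ forces the shattering tree to terminate at a countable height $\eta \le \omega$. The mechanism: each level of the tree gives a disjoint regular-open family, and $k$-separability of every dense subspace should bound how long we can keep splitting before a dense $\sigma$-compact piece ``catches'' all the branches, i.e. before $R_b = \emptyset$ for all branches $b$. Establishing that the height is genuinely $\le \omega$ is the crux.

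\textbf{The main obstacle} I anticipate is exactly this height bound: showing that dense $k$-separability plus feeble compactness forces the recursion in the shattering-tree construction to stop at the first limit level $\omega$, rather than merely at some countable or regular cardinal as Theorem \ref{tm:sh_le} guarantees in general. Handling the limit stage --- verifying that for a suitable dense subspace the branch intersections $R_b$ collapse to empty --- is where feeble compactness and the $\sigma$-compactness of the dense set must be played against each other, and it is where I would expect to spend most of the effort. For the \textbf{corollary}, once $\sh(X) = \omega$ is known for pseudocompact $X$, I would note that a pseudocompact space is Tychonov and feebly compact, hence $\pi$-regular, so the theorem applies; then by Corollary \ref{co:os2os3} and $\sh(X) = \omega$ we get $\osthree(X) = \omega$ (for $\pi$-regular $X$), and Corollary \ref{tm:os2nwcnormal} applied to any compactification $bX$ (which is compact, hence normal), exactly as in the second proof of Theorem \ref{tm:Tych}, yields a Tychonov NWC image of weight $\le \omega$. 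A second-countable Tychonov crowded image is metrizable, and restricting to $X$ and taking closures gives the desired compact metrizable NWC image.
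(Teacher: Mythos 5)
Your overall framing is right on two counts: the paper also proves the contrapositive (feebly compact $+$ $\pi$-regular $+$ $\sh(X)>\omega$ implies some dense subspace is not $k$-separable), and your derivation of the second assertion from the first is essentially the paper's (via $\sh(X)<\widehat{c}(X)=\omega_1$, Theorem \ref{tm:Tych}, and pseudocompact $+$ metrizable $\Rightarrow$ compact). But the step you yourself call the crux --- that dense $k$-separability plus feeble compactness forces the shattering-tree recursion to stop at height $\omega$, with feeble compactness supplying a contradiction at limit level $\omega$ --- is both unproved and wrong as a mechanism. First, a branch whose closures intersect in a set with empty interior does \emph{not} contradict feeble compactness: feeble compactness only guarantees that the intersection of the closures is non-empty, and already in $[0,1]$ a nested sequence of intervals shrinking to a point has non-empty intersection with empty interior. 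Second, dense $k$-separability does not bound the tree height by $\omega$: even for $[0,1]$ one can choose the splittings (say, branches of the form $(0,a_n)$ with $a_n\downarrow 1/2$) so that $R_b\ne\emptyset$ at level $\omega$ and the recursion continues; and in any case $\sh(X)=\omega$ only asserts the existence of \emph{some} countable shattering family, not that every (or any) shattering tree dies at level $\omega$. So the contradiction you are hoping for cannot arise where you look for it.

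The paper's proof runs in exactly the opposite direction, and its machinery is absent from your sketch. After a reduction you omit (densely $k$-separable $\Rightarrow$ CCC, so $\widehat{c}(X)=\omega_1$ and, if $\sh(X)>\omega$, then $\sh(X)=\omega_1$ by Theorem \ref{tm:sh_le}; then, passing to a regular closed subset, one may assume $\sh(U)=\omega_1$ for \emph{every} $U\in\tau^+(X)$), it lets the (modified, using $\pi$-regularity so that successors have closures inside their predecessor) shattering tree grow to full height $\omega_1$. Feeble compactness is then used \emph{positively}, not for a contradiction: it guarantees that for each maximal branch $b$ the set $S_b=\bigcap_{\beta}b(\beta)$ is a non-empty nowhere dense closed $G_\delta$, and, together with the fact that the tree has no uncountable antichains, that the union of all the $S_b$ is dense in $X$. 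The heart of the proof is the construction of a dense non-$k$-separable subspace from these sets: enumerate the maximal branches, left-separate by setting $Q_\xi=S_{b_\xi}\setm\overline{\bigcup_{\eta<\xi}S_{b_\eta}}$, and let $Z=\bigcup_\xi Q_\xi$; then $Z$ is dense, by Theorem 3.1 of \cite{DM} every $\sigma$-compact subset of $Z$ is covered by countably many $Q_\xi$, while the union of countably many $S_b$'s is never dense (their levels are bounded below $\omega_1$, so some node of a higher level misses them all). Hence $Z$ has no dense $\sigma$-compact subset. None of this --- the CCC reduction, the density of $\bigcup_b S_b$, the left-separation, or the appeal to Dow--Moore --- appears in your proposal, so what you have is a correct statement of the goal together with a plan that leads to a dead end.
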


\begin{proof}
We shall actually prove the contrapositive of our statement: If $X$ is feebly compact and $\pi$-regular
with $\sh(X) > \omega$ then $X$ is not densely $k$-separable. Since densely $k$-separable spaces are trivially
CCC, we are done if $X$ is not CCC. So we may assume that $X$ is CCC (i.e. $\widehat{c}(X) = \omega_1$) and hence, 
by Theorem \ref{tm:sh_le}, $\sh(X) = \omega_1$.

We may also assume, without any loss of generality, that  $\sh(U) = \omega_1$ holds for all $U \in \tau^+(X)$.
Indeed, let $\mc U \subs \tau^+(X)$ be a maximal disjoint collection with $\sh(U) = \omega$ for each $U \in \mc U$. 
Then $\bigcup \mc U$ cannot be dense in $X$ because that clearly would imply $\sh(X) = \omega$ as well.
Thus we have a regular closed subset $W$ of $X$ with $\bigcup \mc U \cap W = \emptyset$. But $W$ is also
feebly compact and $\pi$-regular, and clearly, every $V \in \tau^+(W)$ satisfies $\sh(V) = \omega_1$.
Since regular closed subsets of densely $k$-separable are again densely $k$-separable, we may simply replace $X$ with $W$.

Let us now consider the shattering tree $\mc T$ of height $\omega_1$ for $X$ that we constructed in the proof 
of Theorem \ref{tm:sh_le}. (We are going to use the notation and terminology given there.) The only difference in
the construction of $\mc T$ is that the immediate successors in $\mc T_{\alpha+1}$ of any $R \in \mc T_\alpha$ are not $P(R)$ and $Q(R)$ but
the members of a maximal collection $\mc U(R) \subs \RO^+(X)$ such that for every  $U \in \mc U(R)$
we have $\overline{U} \subs R$, moreover $\overline{U} \cap \overline{V} = \emptyset$ for distinct $U,\,V \in \mc U(R)$.
Using the $\pi$-regularity of $X$, it is easy to check that everything we did concerning the shattering tree $\mc T$ in the proof
of Theorem \ref{tm:sh_le} remains now valid. One additional consequence in the present case is that for every limit ordinal $\alpha < \omega_1$
and $b \in B_\alpha$ we have $$S_b = \bigcap_{\beta < \alpha}\overline{b(\beta)} = \bigcap_{\beta < \alpha}b(\beta).$$
In particular, this means that each $S_b$ is a $G_\delta$-set.

First we show that
for each $\alpha < \omega_1$ the union of the level $\mc T_\alpha$ of $\mc T$ is dense in $X$. Indeed, given any $U \in \tau^+(X)$, the assumption
$\sh(U) = \omega_1$ implies that there is $V \in \tau^+(X)$ such that for all $\beta \le \alpha$ no two members of $\mc T_\beta$ meets $V$.
But then by the statement ($I_\alpha$) we proved there we have some $R \in \mc T_\alpha$ such that $V \subs R$,
consequently $U \cap R \ne \emptyset$.

Let us denote by $Br$ the set of all maximal branches of $\mc T$. Clearly, $b \in Br$ means that for some limit $\alpha < \omega_1$
we have $b \in B_\alpha$ and $S_b$ has empty interior.
We claim next that $S = \bigcup \{S_b : b \in Br\}$ is a dense subset of $X$.
To see this, by $\pi$-regularity, it suffices to show that for every $U \in \tau^+(X)$
there is $b \in Br$ with $\overline{U} \cap S_b \ne \emptyset$. Assume, on the contrary, that for every $b \in Br$
we have $\overline{U} \cap S_b = \emptyset$. This implies that for every $b \in Br$ with $b \in B_\alpha$ there is
some $\beta < \alpha$ such that $U \cap b(\beta) = \emptyset$. Indeed, otherwise we had by feeble compactness of $X$ that
$$\emptyset \ne \bigcap_{\beta < \alpha}\overline{U \cap b(\beta)} \subs \overline{U} \cap S_b.$$

Now, for every $\alpha < \omega_1$ we may choose  $R_\alpha \in \mc T_\alpha$ such that $U \cap R_\alpha \ne \emptyset$
and $b_\alpha \in Br$ with $R_\alpha = b_\alpha(\alpha)$. But by our indirect assumption we have $U \cap S_{b_\alpha} = \emptyset$, 
so there is a smallest ordinal $\beta_\alpha > \alpha$ with $U \cap b_\alpha(\beta_\alpha) = \emptyset$. 
Then we may clearly find an uncountable set of ordinals $L \subs \omega_1$ such that $\alpha, \delta \in L$ and $\alpha < \delta$ imply
$\beta_\alpha < \beta_\delta$. However, then we have $U \cap b_\delta(\beta_\alpha) \ne \emptyset$ while $U \cap b_\alpha(\beta_\alpha) = \emptyset$,
hence $b_\delta(\beta_\alpha) \cap b_\alpha(\beta_\alpha) = \emptyset$, consequently $b_\delta(\beta_\delta) \cap b_\alpha(\beta_\alpha) = \emptyset$
as well. But this would mean that $\{b_\alpha(\beta_\alpha) : \alpha \in L\}$ is an uncountable antichain in $\mc T$, 
a contradiction implying that $S$ is indeed dense in $X$. 

Let us now fix an enumeration $\{B_\xi : \xi < \mu =|Br|\}$ of the set of branches $Br$ and for each $\xi < \mu$ define
$Q_\xi = S_{b_\xi} \setm \overline{\bigcup_{\eta < \xi}S_{b_\eta}}$. Then each $Q_\xi$ is again a $G_\delta$-set and
the family  $\{Q_\xi : \xi < \mu\}$ is left-separated in the sense of section 3 of \cite{DM}. 

We claim that $Z = \bigcup \{Q_\xi : \xi < \mu\} \subs Y$ is also dense in $X$. Indeed, for any $U \in \tau^+(X)$ there is
a smallest ordinal $\xi$ such that $U \cap S_{b_\xi} \ne \emptyset$, which implies that $U \cap \overline{\bigcup_{\eta < \xi}S_{b_\eta}} = \emptyset$,
hence $U \cap Q_\xi \ne \emptyset$. But then, by Theorem 3.1 of \cite{DM}, for every compact subset $K \subs Q$ there is a countable set
of indices $a_K \subs \mu$ such that $K \subs \bigcup_{\xi \in a_K} Q_\xi$, and then this also holds for any $\sigma$-compact $K \subs Q$.
However, for any countable set of branches $A \subs Br$ the union of $\{S_{b} : b \in A\}$ is not dense in $X$.
Indeed, let $b \in B_{\alpha_b}$ for $b \in A$ and choose the limit $\alpha < \omega_1$ above $\zeta = \sup \{\alpha_b : b \in A\}$. 
Then for any branch $d \in B_\alpha \cap Br$ we have $d(\zeta + 1) \cap \bigcup \{S_b : b \in A\} = \emptyset$.
Thus we may conclude that the dense subset $Z$ of $X$ has no $\sigma$-compact dense subset, hence $X$ is not
densely $k$-separable.

Now, if $X$ is pseudocompact, hence Tychonov, and densely $k$-separable then, by Theorem \ref{tm:sh_le}, $\sh(X) = \omega$ implies
that $X$ has a Tychonov NWC image $Y$ of countable weight, hence $Y$ is metrizable. But clearly, $Y$ is also pseudocompact, and
then compact as well. 
\end{proof}

\begin{corollary}\label{co:pscpt}
Let X be a crowded pseudocompact space, then there is a partition $\mc Z$ of $X$ consisting of nowhere dense closed $G_\delta$ sets and satisfying 
that, for all non-empty regular closed subsets $R$ of $X$, the set $\{Z \in \mc Z : R \cap Z \ne \emptyset\}$ has cardinality $\mathfrak{c}$.
\end{corollary}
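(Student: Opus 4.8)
The plan is to derive Corollary \ref{co:pscpt} directly from the machinery developed in the proof of the preceding theorem, rather than reproving anything from scratch. The key realization is that the shattering tree $\mc T$ built there already produces, at each maximal branch $b \in Br$, a nowhere dense closed $G_\delta$ set $S_b$, and these sets refine into a family that partitions (a dense part of) $X$. So the first step is to recall that for a crowded pseudocompact $X$ we have $\widehat{c}(X) = \omega_1$ and $\sh(X) = \omega_1$ in the relevant case, and apply the construction of the shattering tree of height $\omega_1$ with the $\pi$-regular successor step (successors of $R$ chosen as a maximal disjoint collection $\mc U(R) \subs \RO^+(X)$ with closures inside $R$ and pairwise disjoint). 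As established there, this guarantees that each $S_b = \bigcap_{\beta<\alpha} \overline{b(\beta)} = \bigcap_{\beta<\alpha} b(\beta)$ is a closed nowhere dense $G_\delta$ set.

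The second step is to assemble these $S_b$ into an honest \emph{partition}. The left-separation trick from the theorem's proof, setting $Q_\xi = S_{b_\xi} \setm \overline{\bigcup_{\eta<\xi} S_{b_\eta}}$ after fixing an enumeration $\{b_\xi : \xi < |Br|\}$ of $Br$, makes the $Q_\xi$ pairwise disjoint and dense in union; but to get a partition of \emph{all} of $X$ one must also cover the leftover points. Here I would either (a) throw the complement $X \setm \bigcup_\xi Q_\xi$ into the partition as additional nowhere dense closed $G_\delta$ pieces by iterating the same shattering construction on that regular closed remainder, or more cleanly (b) observe that for a pseudocompact (hence feebly compact $\pi$-regular) space every point lies in some $S_b$: by feeble compactness each cofinal branch through a point has nonempty intersection of closures, so the $S_b$ genuinely cover $X$, and then I take the refinement $\{S_b \setm \overline{\bigcup_{\eta<\xi} S_{b_\eta}} : \cdots\}$ together with intersecting the leftover points into the appropriate $Q_\xi$. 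The fibers of the natural map sending each point to its branch give the partition $\mc Z$ into nowhere dense closed $G_\delta$ sets.

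The third and genuinely substantive step is the cardinality condition: for every non-empty regular closed $R$, the set of $Z \in \mc Z$ meeting $R$ has size $\mathfrak{c}$. Since $R$ is again feebly compact and $\pi$-regular with $\sh(R) = \omega_1$, I can run the Cantor-tree / dyadic branching argument \emph{inside} $R$. The successor step of $\mc T$ splits each node into at least two disjoint regular open pieces with disjoint closures, so above any node inside $R$ the tree contains a full binary subtree of height $\omega$, and by feeble compactness each of its $\mathfrak{c}$ many branches yields a distinct nonempty $S_b \subs R$ — hence a distinct $Z \in \mc Z$ meeting $R$. This gives at least $\mathfrak{c}$, and the reverse inequality is automatic since $|\mc Z| \le |Br| \le \mathfrak{c}$ (a $\pi$-regular CCC space has at most $\mathfrak{c}$ many $G_\delta$ sets of this form).

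I expect the main obstacle to be the partition/covering bookkeeping in the second step: the theorem's proof only shows $Z = \bigcup Q_\xi$ is dense, not that it is all of $X$, and promoting this to a genuine partition of the whole space requires carefully using feeble compactness to show every point is captured by some maximal branch, then handling the diagonal overlaps so the pieces stay disjoint closed $G_\delta$ sets without losing the density of the branch family. The cardinality lower bound of $\mathfrak{c}$, while it is the headline content, follows fairly mechanically from the binary branching already present in the construction, so the real care lies in making the global partition structure precise.
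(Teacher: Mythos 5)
Your proposal takes the wrong route, and the route it takes cannot be completed. The paper's own proof is a one-liner riding on the final clause of the theorem this corollary follows: take an NWC surjection $f : X \to Y$ onto a compact metrizable space $Y$ (available exactly because, in the situation where the corollary is true and used, $X$ is densely $k$-separable, whence $\sh(X)=\omega$), and let $\mc Z = \{f^{-1}(y) : y \in Y\}$. Each fiber is closed, is $G_\delta$ because points of the metrizable $Y$ are $G_\delta$, and is nowhere dense by Proposition \ref{tm:nwc-equivalence}; and for non-empty regular closed $R$ the image $f[R]$ is a crowded feebly compact subspace of $Y$, hence a crowded compact metrizable space, so $|f[R]| = \mathfrak{c}$ and exactly $\mathfrak{c}$ members of $\mc Z$ meet $R$. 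You instead try to build $\mc Z$ from the shattering tree of height $\omega_1$, i.e.\ you place yourself in the case $\sh(X)=\omega_1$. But that is precisely the case which the preceding theorem shows to be incompatible with dense $k$-separability, i.e.\ with the implicit hypothesis under which this corollary can serve as a replacement for Lemma 3.1 of \cite{DJ}; when $\sh(X)=\omega$, the $\omega_1$-tree and its branch sets $S_b$ simply do not exist. Your opening claim that a crowded pseudocompact $X$ has $\widehat{c}(X)=\omega_1$ is also unjustified (pseudocompactness does not give CCC), and in fact the statement read literally for \emph{all} crowded pseudocompact spaces is false, so no argument can prove it without invoking the extra hypothesis that yields the metrizable NWC image: in $\alpha([0,1]\times D(\omega_1))$ any closed $G_\delta$ set containing the point at infinity must contain $[0,1]\times\{\xi\}$ for all but countably many $\xi$, hence has non-empty interior, so there the required partition cannot exist.

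Even granting the $\sh(X)=\omega_1$ setting, your second step has a genuine gap. Feeble compactness gives that $S_b \ne \emptyset$ for every maximal branch $b$; it does \emph{not} give that every point of $X$ lies on a branch, which is what covering requires. Since the union of each level $\mc T_\alpha$ is only dense open, points of $X \setminus \bigcup \mc T_\alpha$ lie on no branch at all; accordingly, the theorem's proof shows only that $\bigcup\{S_b : b \in Br\}$ is \emph{dense}, never that it equals $X$, so your option (b) is simply false. Option (a) does not parse either: the uncovered set $X \setminus \bigcup\{S_b : b \in Br\}$ is neither open, nor closed, nor regular closed, so there is no "regular closed remainder" on which to iterate the construction. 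Moreover, the sets $Q_\xi = S_{b_\xi} \setminus \overline{\bigcup_{\eta<\xi} S_{b_\eta}}$ are relatively open in $S_{b_\xi}$, hence not closed, so that refinement cannot yield closed $G_\delta$ pieces (it is also unnecessary for disjointness, since in the modified $\pi$-regular construction distinct maximal branches already have disjoint $S_b$'s). The Cantor-subtree counting in your third step is fine in spirit, but it rests on a partition that never gets constructed.
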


\begin{proof}
Let $f : X \to Y$ be an NWC surjection of $X$ onto the compact metrizable space $Y$. Then $\mc Z = \{f^{-1}(y) : y \in Y\}$ is the required partition of $X$.
The last requirement is immediate from the fact that $f[R]$ is a crowded (pseudo)compact set in $Y$, for $f$ is NWC. 
\end{proof}

It is easy to check that this Corollary  could replace Lemma 3.1 in the proof of
the main result of \cite{DJ}.

\bigskip

\end{document}